\documentclass[onefignum,onetabnum]{siamonline220329}

\usepackage{amsmath}
\usepackage{thmtools}
\usepackage{hyperref}
 
\theoremstyle{plain}
\theoremheaderfont{\normalfont\bfseries}
\theorembodyfont{\normalfont\itshape}
\theoremseparator{.}
\theoremsymbol{}
\newtheorem{namedTheorem}[theorem]{Theorem}
\newtheorem{namedLemma}[theorem]{Lemma}
\newtheorem{namedAssumption}[theorem]{Assumption}

\usepackage{lipsum}
\usepackage{amsfonts}
\usepackage{graphicx}
\usepackage{epstopdf}
\usepackage{algorithmic}
\usepackage[normalem]{ulem}  

\usepackage{tikz}
\usetikzlibrary{shapes.geometric, arrows.meta, bending, positioning}

\renewcommand{\vec}[1]{\boldsymbol{#1}}
\newcommand{\mat}[1]{\boldsymbol{\mathrm{#1}}}

\newcommand{\fRHS}{f}
\newcommand{\gRHS}{g}

\newcommand{\vbefore}{\vec{v}_j^{l-1}}
\newcommand{\vafter}{\vec{v}_j^l}

\ifpdf
  \DeclareGraphicsExtensions{.eps,.pdf,.png,.jpg}
\else
  \DeclareGraphicsExtensions{.eps}
\fi

\usepackage{enumitem}
\setlist[enumerate]{leftmargin=.5in}
\setlist[itemize]{leftmargin=.5in}


\newsiamremark{remark}{Remark}
\newsiamremark{hypothesis}{Hypothesis}
\crefname{hypothesis}{Hypothesis}{Hypotheses}
\newsiamthm{claim}{Claim}

\usepackage{makecell}
\usepackage{threeparttable}
\newcommand{\norm}[1]{\lVert#1\rVert}
\newcommand{\brac}[1]{\left(#1\right)}
\newcommand{\R}{\mathbb{R}}
\newcommand{\C}{\mathbb{C}}



\headers{Implicitly-templated ODE-nets}{A. Zhu, T. Bertalan, B. Zhu, Y. Tang, and I. G. Kevrekidis}

\title{Implementation and (Inverse Modified) Error Analysis \\ for implicitly-templated ODE-nets}


\author{Aiqing Zhu\thanks{LSEC, ICMSEC, Academy of Mathematics and Systems Science, Chinese Academy of Sciences, Beijing 100190, China (\email{zaq@lsec.cc.ac.cn})}
\and Tom Bertalan\thanks{Department of Chemical and Biomolecular Engineering, Johns Hopkins University, Baltimore, Maryland 21211, USA (\email{tom@tombertalan.com})}
\and Beibei Zhu\thanks{School of Mathematics and Physics, University of Science and Technology Beijing, Beijing 100083, China (\email{zhubeibei@lsec.cc.ac.cn})}
\and Yifa Tang\thanks{Corresponding author, LSEC, ICMSEC, Academy of Mathematics and Systems Science, Chinese Academy of Sciences, Beijing 100190, China (\email{tyf@lsec.cc.ac.cn})}
\and Ioannis G. Kevrekidis\thanks{Corresponding author, Department of Chemical and Biomolecular Engineering and Department of Applied Mathematics and Statistics, Johns Hopkins University, Baltimore, Maryland 21211, USA (\email{yannisk@jhu.edu})}}

\usepackage{amsopn}






\makeatletter
\@ifpackageloaded{hyperref}%
  {\newcommand{\mylabel}[2]
    {\protected@write\@auxout{}{\string\newlabel{#1}{{#2}{\thepage}%
      {\@currentlabelname}{\@currentHref}{}}}}}%
  {\newcommand{\mylabel}[2]
    {\protected@write\@auxout{}{\string\newlabel{#1}{{#2}{\thepage}}}}}
\makeatother

\begin{document}

\maketitle

\begin{abstract}

We focus on learning unknown dynamics from data using ODE-nets templated on implicit numerical initial value problem solvers.
First, we perform Inverse Modified error analysis of the ODE-nets using unrolled implicit schemes for ease of interpretation. It is shown that training an ODE-net using an unrolled implicit scheme returns a close approximation of an Inverse Modified Differential Equation (IMDE). 
In addition, we establish a theoretical basis for hyper-parameter selection when training such ODE-nets, whereas current strategies usually treat numerical integration of ODE-nets as a black box. 
We thus formulate an adaptive algorithm which monitors the level of error and adapts the number of (unrolled) implicit solution iterations during the training process, so that the error of the unrolled approximation is less than the current learning
loss.
This helps accelerate training, while maintaining accuracy. Several numerical experiments are performed to demonstrate the advantages of the proposed algorithm 
compared to nonadaptive unrollings, 
and validate the theoretical analysis.
We also note that this approach naturally allows for incorporating partially known
physical terms in the equations, giving rise to what is termed ``gray box" identification.
\end{abstract}

\begin{keywords}
learning dynamics, deep learning, ODE-nets, implicit schemes, neural ODEs
\end{keywords}

\begin{MSCcodes}
37M10, 65L06, 65L09, 65P99
\end{MSCcodes}

\section{Introduction}
Discovering unknown dynamical systems from observed dynamical data is an established systems task where machine learning has been shown to be remarkably effective.
Neural networks $\fRHS{}_{\theta}$, coined  ``ODE-nets", are used to parameterize the unknown governing differential equations; their parameters $\theta$ are obtained by minimizing the difference between the observed state time series and the outputs evaluated by numerically solving the ODE governed by the right-hand-side $\fRHS{}_{\theta}$. 
Original publications along this line date back to the 1990s~\cite{anderson1996comparison,gonzalez1998identification,rico1994continuous,rico1993continuous}. Recently, Neural ODEs~\cite{chen2018neural} substantially revisited these ideas using modern computational tools, and is being applied to more challenging tasks beyond modeling dynamical systems. Here, the adjoint reverse-time equations---introduced as a continuous-time analogue of backpropagation---are employed for computation of gradients. 
In addition, various related architectures have been proposed \cite{raissi2018multistep, hu2022revealing, wu2019numerical}, and research interest in this direction has been growing to include coupling machine learning with prior knowledge of (some) physics of the underlying systems \cite{bertalan2019learning,botev2021priors, chen2021data,huh2020time,jin2020sympnets,wu2020structure,yu2021onsagernet} (see also \cite{rico1994continuous} for a discussion on gray-box modeling for incorporating known physics into such learned models).

However, even assuming the best-case convergence of the optimizer and accuracy of the data, the numerical integration of the network used to fit the data can itself introduce a bias into the equations extracted. In this paper, we propose to analyse the influence of the numerical integration scheme template in such learning models.

In the last few decades, modified differential equations (MDEs) and backward error analysis \cite{eirola1993aspects,feng1991formal,feng1993formal,hairer1997life,reich1999backward, sanz1992symplectic,yoshida1993recent} have become well-established tools for analyzing the numerical solution of evolution equations (where we produce approximate trajectories from a true ODE). The main idea of MDEs is to interpret the numerical solution as the exact solution of a perturbed differential equation expressed by a formal series. We can then analyze the MDE, which is easier than the analysis of the discrete numerical solution.

Recently, inspired by MDEs and BE, Inverse Modified Differential Equations (IMDEs) \cite{zhu2022on} have been proposed; they allow the efficient analysis of numerical schemes applied to the discovery of dynamics ({\em where we produce an approximate ODE from true trajectories}). By analogy with the MDE (see \cref{fig:diagram}), the IMDE is a perturbed differential equation whose numerical solution matches the exact observed solution (the data). It was shown in \cite{zhu2022on} that training an ODE-net returns a close approximation of the IMDE, and that some known analysis results of solving ODEs, such as order of convergence, have natural extensions to the field of discovery of dynamics.
\begin{figure}[htb]
    \centering
    \includegraphics[width=0.5\linewidth]{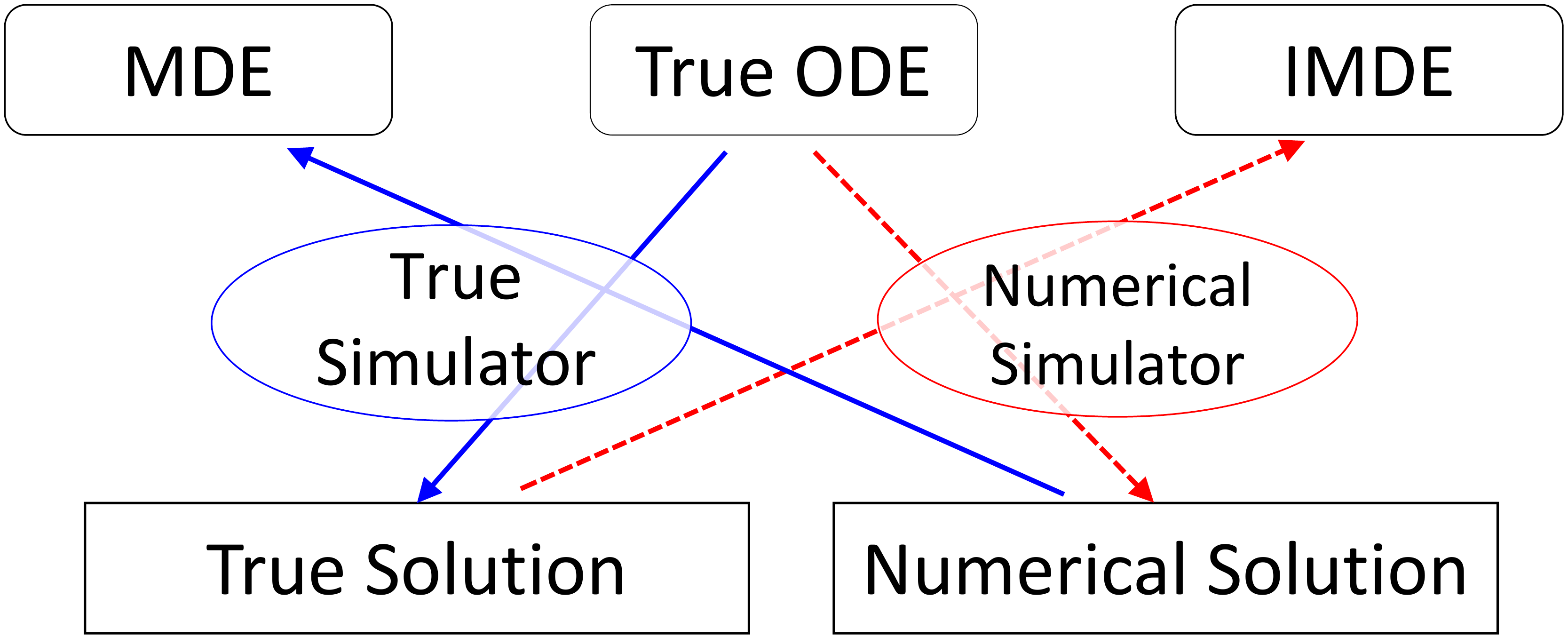}
    \caption{Schematic depiction of the relation between the true model, the Modified and
        the Inverse Modified Differential  Equations (MDE and IMDE respectively). 
        Forward Error Analysis studies the difference between the true and the numerical solution of the model, while Backward, and Inverse Backward Error 
        Analysis examine the difference between the true model and the MDE/IMDE respectively. 
        }
    \label{fig:diagram}
\end{figure}
Other analysis results exist for the discovery
of dynamics by combining numerical integrators and deep learning. In \cite{keller2021discovery}, a refined framework is established to derive the convergence and stability of Linear Multistep Neural Networks (LMNets) \cite{raissi2018multistep} via the characteristic polynomial of classic numerical linear multistep methods (LMM). In addition, an augmented loss function was introduced, based on auxiliary conditions that serve a purpose analogous to the explicit starting step used when performing forward integration with a LMM.
It has been shown that the grid error of the global minimizer is bounded by the sum of the discretization error and the approximation error \cite{du2022discovery}.
\begin{figure}[htb]
    \centering
    \newcommand{\edgesize}[1]{\scriptsize #1}
    \begin{tikzpicture}[node distance=4cm]
        \tikzstyle{term} = [rectangle, minimum height=1cm, text centered, draw=none, fill=none, align=center]
        \tikzstyle{arrow} = [thick, ->, >=stealth]
        \tikzstyle{errArrow} = [thick, <->, >=stealth, dashed, red]
        \node (ftrue) [term] {$f$ \\ Truth};
        \node (fIMDE) [term,right of=ftrue, xshift=3cm] {$f_h$ \\ IMDE};
        \node (ftheta) [term,above of=ftrue, xshift=2cm] {$f_\theta$ \\ Neural Networks};
        \draw [arrow,blue] (ftrue) -- node[anchor=north, align=left, text width=4cm] {
            \edgesize{Obtain by pencil-and-paper formal expansion with given $f$ and numerical scheme.}
        } (fIMDE);
        \draw [arrow,violet] (ftrue) -- node[anchor=east, align=left, text width=5.5cm] {
            \edgesize{\cite{keller2021discovery,raissi2018multistep}: Obtain by minimizing $r_\mathrm{LMNets}$. \\
            \cite{zhu2022on}: Obtain by minimizing $r_\mathrm{ee}$. \\
            Ours: Obtain by minimzing $r_\mathrm{ie}$
            }
        } (ftheta);
        \draw [errArrow] (ftrue) to [bend right=45] node[anchor=west, align=left, xshift=0.5cm, text width=3.5cm] {
            \edgesize{Difference is $h^p + \mathcal{L}$, where $h$ is the discrete step and $\mathcal{L}$ indicates learning performance, \cite{keller2021discovery,zhu2022on}.}
        } (ftheta);
        \draw [errArrow] (fIMDE) to [bend right=45] node[anchor=west, align=left, text width=4cm, xshift=0.5cm] {
            \edgesize{Difference is $\mathcal{L} +$ a \textit{subexponentially small value}, \cite{zhu2022on} and ours.}
        } (ftheta);
    \end{tikzpicture}
    \caption{
        \textbf{Relationships between different ODEs considered in the backwards analysis literature.}
        A schematic diagram showing existing theoretical analyses and our main results with explicit Euler (ee)  and implicit Euler (ie) schemes as examples.
        The residual of explicit Euler is
        $r_\mathrm{ee} = ||\phi_{\Delta t}(x) - (x + \Delta t \cdot f_\theta(x)||_2^2$
        where $\{x, \phi_{\Delta t}(x)\}$ are the data.
        The residual of the LMNets approach to implicit Euler is
        $r_\mathrm{LMNets} = ||\phi_{\Delta t}(x) - (x + \Delta t \cdot f_\theta(\phi_{\Delta t}(x) )||_2^2$, given the data.
        The residual of our implicit Euler is
        $r_\mathrm{ie}=||\phi_{\Delta t}(x) - \mathrm{argsoln}_z \{z = x + \Delta t \cdot f_\theta(z)\}||_2^2$ where $z$ is the network prediction obtained by a root-finding algorithm.
    }
    \label{fig:results}
\end{figure}

These analyses concentrate on LMM in LMNets, where all LMM discretization (typically implicit) can be exactly employed, and directly quantify the error between the true governing function and its neural network approximation. The existence of an associated IMDE implies uniqueness of the solution to the learning task (in a concrete sense), and also allows us to analyze the numerical error in ODE-nets. However, the results in \cite{zhu2022on} only hold when the numerical integration is {\em exactly} evaluated, whereas the implementation of implicit integration in ODE-nets requires a root-finding algorithm, i.e. by unrolling the iterations, so as to obtain an accurate approximate solution. The mutual differences between these existing theoretical analyses and our main results are schematically visualized in \cref{fig:results}.

In this paper, we extend the analysis proposed in~\cite{zhu2022on} and perform IMDE
analysis for ODE-nets in which we unroll (and truncate) the iterations for solving the implicit scheme within the network architecture. 
To begin with, we search for a perturbed differential equation, i.e., the IMDE, such that its unrolled implicit integration matches observations of the exact solution of the true system. It is noted that this IMDE now depends on the number of unrolled stages (iteration number) of the unrolled implicit scheme. In addition, we prove that, under reasonable assumptions, training an ODE-net using an unrolled implicit scheme returns an approximation of the corresponding IMDE. As a direct consequence, increasing the iteration number results in a more accurate recovery of the IMDE. Finally, the rate of convergence of ODE-nets using unrolled implicit schemes is also presented. Several experiments are performed to validate the analysis, and the numerical results are in agreement the theoretical findings.

The numerical integration of ODE-nets is typically treated as a black box in current strategies.
Here, an unrolling approach to implicit integration requires recurrent calculations; augmenting computational cost and,
in particular, memory demands.
Based on the analysis results, we establish a theoretical basis for hyper-parameter selection when training ODE-nets. We formulate an adaptive algorithm that monitors the level of error and adapts the iteration number in the training process to accelerate training while maintaining accuracy. In the initial stage of training, a rough approximation target, i.e., a smaller iteration number, is accurate enough for optimization. As learning loss decreases, we increase the iteration number so as to achieve a more accurate target. Numerical experiments show that the proposed algorithm leads to a $2$-$3\times$ speedup in training without any degradation in accuracy.

\subsection{Related works}
There have been extensive attempts to determine unknown dynamics using various approaches including symbolic regression~\cite{schmidt2009distilling},
Gaussian processes~\cite{raissi2018hidden},
sparse regression~\cite{brunton2016discovering}, statistical learning~\cite{lu2019nonparametric}, etc. 
Among various models, the ODE-nets~\cite{rico-martinez92_discr_vs,chapter_bulsari,chen2018neural,rico1993continuous}
have been established as powerful tools to model complicated physical phenomena from time series data, and have achieved numerous successes~\cite{anderson1996comparison, botev2021priors, gonzalez1998identification, huh2020time, raissi2018multistep,rico1994continuous,rico1993continuous}.
Recently, researchers have focused on leveraging a continuous-time representation to incorporate physical inductive biases such as symplectic structure~\cite{bertalan2019learning,greydanus2019hamiltonian,wu2020structure}, the Onsager principle~\cite{yu2021onsagernet}, the GENERIC formalism~\cite{zhang2021gfinns} and time-reversal symmetry~\cite{huh2020time}, to name a few, into the learning model.

The implementation of ODE-nets and their variants is inevitably linked with numerical integration.
Several libraries such as \textit{torchdiffeq}, \textit{diffrax} and \textit{torchdyn} have been developed to provide standardized differentiable implementations of ODE solvers. 
Many learning models use the Euler discretization method (e.g.~\cite{bertalan2019learning,greydanus2019hamiltonian}) or higher-order explicit Runge-Kutta methods (e.g.~\cite{yu2021onsagernet}),
while some models encoding symplecticity use a symplectic integrator to preserve the special Hamiltonian form (e.g.,~\cite{chen2020symplectic,toth2020hamiltonian}). 
The work in~\cite{pal2021opening} proposed a novel 
stiffness regularization
for ODE-nets based on the internal cost of numerical integration. 
The interplay between learning Neural ODEs and numerical integration is explored in~\cite{poli2020hypersolver}, where so-called hypersolvers are introduced for fast inference.
A comprehensive study of 
{gradient-based learning} with
implicit integration was explored in previous work~\cite{anderson1996comparison},
{considering unrolling as well as
Pineda's and Almeida's Recurrent Back-Propagation~\cite{pineda87_gener_back_propag_to_recur_neural_networ,Almeida1987}}. In this paper, we focus on the implementation of
{\em unrolled}
implicit numerical integration within ODE-nets, its numerical analysis, and the adaptation of the iteration number for the solution of the implicit problem to reduce computational cost.

Recent works~\cite{bai2019deep,behrmann2019invertible,huang2021implicit,laurent2021implicit} proposed various versions of implicit models and demonstrated their empirical success;
they directly exploit root-finding algorithms (e.g., fixed-point iteration, Newton-Raphson iteration and its variants, Broyden’s method and Anderson acceleration)
to solve for the output in the forward pass.
In \cite{bai2022neural} an auxiliary network was introduced, to provide both the initial value and perform iterative updates to improve inference efficiency.
In \cite{geng2021on} a novel gradient estimate was proposed, to circumvent computing the exact gradient by implicit differentiation. Although the precise formulations and motivations of these implicit models are quite different, applications of our adaptive algorithm to these implicit models is a promising avenue for future work.

\section{Problem setup}\label{sec: problem setup}
Consider the dynamical system
\begin{equation}\label{eq:ODE2}
\frac{d}{dt}\vec{y}(t) = \fRHS{}(\vec{y}(t)),\quad \vec{y}(0) =\vec{x}
\end{equation}
where $ \vec{y}(t) \in \mathbb{R}^D$ is the state vector, evolving over time according to the governing function $\fRHS{}$. Let $\phi_{t}(\vec{x})$ be the exact solution and $\Phi_{h}(\vec{x})$ be the numerical solution (by some initial value problem solving algorithm) with discrete step $h$. In order to emphasize a specific differential equation, we will add the subscript $\fRHS{}$ and denote $\phi_t$ as $\phi_{t,\fRHS{}}$ and $\Phi_h$ as $\Phi_{h,\fRHS{}}$.

If $\fRHS{}$ and the initial state $\vec{x}$ are known, the future states can be predicted by solving the equation \cref{eq:ODE2}. On the other hand, if the exact governing equation is unknown, but some trajectories are given, ODE-nets model the dynamical system by neural networks and then predict future states via the learned model.

Mathematically, an ODE-net identified right-hand-side leads to the ODE
\begin{equation}\label{eq:ODE-net}
\frac{d}{dt}\tilde{\vec{y}}(t) = \fRHS{}_{\theta}(\tilde{\vec{y}}(t)), \quad \tilde{\vec{y}}(0) = \vec{x},
\end{equation}
where $\fRHS{}_{\theta}$ is the neural network approximating the unknown vector field $\fRHS{}$.
With initial condition $\vec{x}$, an ODE-net predicts the output by solving \cref{eq:ODE-net} numerically. 
Given $N$ observed trajectories $\vec{x}_n, \phi_{\Delta t}(\vec{x}_n), \cdots, \phi_{M\Delta t}(\vec{x}_n)$, $n=1,\cdots, N$ with time step $\Delta t$,
the network parameters are determined by minimizing the loss function
\begin{equation}\label{eq:loss}
\mathcal{L}_{exact} = \sum_{n=1}^N \sum_{m=1}^M\|\text{ODESolve}(\vec{x}_n, \fRHS{}_{\theta}, m \Delta t) - \phi_{ m \Delta t}(\vec{x}_n)\|_2^2/(m \Delta t)^2.
\end{equation}
Note that variable data step $\Delta t$ are also possible.
$M=1$ is the classical ``teacher forcing"; excessive M
can be both computationally costly and offer limited benefits especially early in training, when the long-time predictions are especially poor.
So, if the training data is in the form of long trajectories,
we often divide them into smaller sub-episodes, leading 
to an $M$-step teacher forcing scheme \cite{williams1989learning}.
We used $M=1$ for all of the numerical experiments in this paper except the last, for which we used $M=10$.

In this paper, the choice for the ODE solver consists of $s$ compositions of a numerical scheme, i.e.,
\begin{equation*}
\text{ODESolve}(\vec{x}, \fRHS{}_{\theta}, m\Delta t)=\underbrace {\Phi_{h,\fRHS{}_{\theta}} \circ \cdots \circ \Phi_{h,\fRHS{}_{\theta}}}_{\text{ $ms$ compositions}}(\vec{x})= \brac{\Phi_{h,\fRHS{}_{\theta}}}^{ms}(\vec{x}),
\end{equation*}
where $h=\Delta t/s$ is the discrete step.
A common choice of numerical scheme $\Phi_h$ is the Runge-Kutta method, 
which is formulated as
\begin{subequations}\label{eq:rk}
\begin{align}
&\vec{v}_i = \vec{x} + h\sum_{j=1}^I a_{ij}  \fRHS{}_{\theta}(\vec{v}_j) \quad i = 1, \cdots, I\label{eq:rk1}\\
&\Phi_{h,\fRHS{}_{\theta}}(\vec{x}) = \vec{x} + h\sum_{i=1}^I b_{i}  \fRHS{}_{\theta}(\vec{v}_i).
\end{align}
\end{subequations}
A Runge-Kutta method \cref{eq:rk} is explicit only if $a_{ij}=0$ for $i\leq j$.
Otherwise it is implicit, and the output has to be computed iteratively.
For example, we could use fixed-point iteration (successive substitution) with fixed iteration number $L$, 
{in which case} the approximation of \cref{eq:rk},
denoted by $\Phi_{h,\gRHS{}}^L(\vec{x})$,
is given by
\begin{equation}\label{eq:irk}
\begin{aligned}
&\vec{v}_i^{0} = \vec{x}\quad i = 1, \cdots, I,\\
&\vec{v}_i^{l} = \vec{x} + h\sum_{j=1}^I a_{ij}  \fRHS{}_{\theta}(\vec{v}_{j}^{l-1}) \quad i = 1, \cdots, I, \ l = 1, \cdots, L.\\
&\Phi_{h,\fRHS{}_{\theta}}^L(\vec{x}) = \vec{x} + h\sum_{i=1}^I b_{i} \fRHS{}_{\theta}(\vec{v}_i^L).
\end{aligned}
\end{equation}
Newton-Raphson iteration is available as an alternative approach for solving the implicit equation \cref{eq:rk1}, where the approximation using $L$ iterations of \cref{eq:rk}, denoted as $\Phi_{h,\gRHS{}}^L(\vec{x})$, is given by
\begin{equation}\label{eq:irk_nr}
\begin{aligned}
&\vec{v}_i^{0} = \vec{x}\quad i = 1, \cdots, I,\\
&\vec{v}_i^{l} = \vec{x} + h\sum_{j=1}^I a_{ij} \big( \fRHS{}_{\theta}(\vec{v}_{j}^{l-1}) + \fRHS{}_{\theta}'(\vec{v}_{j}^{l-1})(\vec{v}_{j}^{l} - \vec{v}_{j}^{l-1})\big) \quad i = 1, \cdots, I, \ l = 1, \cdots, L.\\
&\Phi_{h,\fRHS{}_{\theta}}^L(\vec{x}) = \vec{x} + h\sum_{i=1}^I b_{i} \fRHS{}_{\theta}(\vec{v}_i^L).
\end{aligned}
\end{equation}
The second of the equations in \cref{eq:irk_nr} is equivalent to the $l^\mathrm{th}$ Newton step, where we know $\vbefore$ for $j=1,\ldots,I$, and we solve (for each $l$) $D\times I$ linear equations $F'(\vec{V}^{l-1}) \cdot (\vec{V}^{l-1} - \vec{V}^{l}) = F(\vec{V}^{l-1})$ to obtain $\vafter$ for $j=1,\ldots,I$. Specifically, 
\begin{equation*}
    \vec{V}^{l-1} = \begin{pmatrix}\vec{v}_1^{l-1}\\ \vec{v}_2^{l-1}\\ \vdots\\ \vec{v}_I^{l-1} \end{pmatrix},\quad 
    \vec{V}^{l} = \begin{pmatrix}\vec{v}_1^{l}\\\vec{v}_2^{l}\\ \vdots\\ \vec{v}_I^{l} \end{pmatrix},\quad
    F(\vec{V}^{l-1}) = \begin{pmatrix}\vec{v}_1^{l-1}- \vec{x} - h\sum_{j=1}^I a_{1j}  \fRHS{}_{\theta}(\vec{v}_j^{l-1})\\ \vec{v}_2^{l-1}- \vec{x} - h\sum_{j=1}^I a_{2j}  \fRHS{}_{\theta}(\vec{v}_j^{l-1})\\ \vdots\\ \vec{v}_I^{l-1} -\vec{x} - h\sum_{j=1}^I a_{Ij}  \fRHS{}_{\theta}(\vec{v}_j^{l-1}) \end{pmatrix},
\end{equation*}
and
\begin{equation*}
F'(\vec{V}^{l-1}) = \begin{pmatrix}\mat{I}_{D\times D}-h a_{11}  \fRHS{}'_{\theta}(\vec{v}_1^{l-1}) & -h a_{12}  \fRHS{}'_{\theta}(\vec{v}_2^{l-1}) & \cdots & -h a_{1I}  \fRHS{}'_{\theta}(\vec{v}_I^{l-1})\\
-h a_{21}  \fRHS{}'_{\theta}(\vec{v}_1^{l-1}) & \mat{I}_{D\times D}-h a_{22}  \fRHS{}'_{\theta}(\vec{v}_2^{l-1}) & \cdots & -h a_{2I}  \fRHS{}'_{\theta}(\vec{v}_I^{l-1})\\ 
\vdots & \vdots & \ddots & \vdots \\
-h a_{I1}  \fRHS{}'_{\theta}(\vec{v}_1^{l-1}) & -h a_{I2}  \fRHS{}'_{\theta}(\vec{v}_2^{l-1}) &\cdots & \ \mat{I}_{D\times D} -h a_{II}  \fRHS{}'_{\theta}(\vec{v}_I^{l-1}) \end{pmatrix}.
\end{equation*}

In either case, (\cref{eq:irk} or \cref{eq:irk_nr}), all of the operations
(including any Jacobian evaluations or inversions) 
have forward and backward implementations in 
established automatic differentiation packages,
and the practical loss function we optimize
is given as
\begin{equation}\label{eq:unrolledloss}
\mathcal{L}_{unrolled}:=\sum_{n=1}^N\sum_{m=1}^{M} \norm{\brac{\Phi_{h,\fRHS{}_{\theta}}^L}^{ms}(\vec{x}_n)- \phi_{m \Delta t}(\vec{x}_n)}_2^2/(m\Delta t)^2.
\end{equation}

\section{Inverse Modified Error Analysis}
The discovery of dynamics using ODE-nets is essentially an inverse process. 
As (direct) Modified Differential Equations (MDEs) were well-established for the numerical analysis of differential equations, 
the idea of a formal extension in Inverse Modified Differential Equations (IMDEs) should prove particularly useful to the study error analysis for ODE-nets~\cite{zhu2022on}. 
In this section, we will extend the results in~\cite{zhu2022on} to unrolled implicit schemes. 

\subsection{Inverse Modified Differential Equations of unrolled implicit schemes}
\label{sec:IMDEunrolledImplicit}
An IMDE
is a perturbed differential equation of the form
\begin{equation*}
\frac{d}{dt}\vec{\tilde{y}}(t)=\fRHS{}_h(\vec{\tilde{y}}(t))  = \fRHS{}_0(\vec{\tilde{y}})+h\fRHS{}_1(\vec{\tilde{y}})+h^2\fRHS{}_2(\vec{\tilde{y}})+\cdots,
\end{equation*}
such that formally
\begin{equation}\label{eq:flow}
\Phi_{h,\fRHS{}_h}(\vec{x}) = \phi_{h,\fRHS{}}(\vec{x}),
\end{equation}
where the identity is understood in the sense of the formal power series in $h$. 
To obtain $\fRHS{}_h$ of an unrolled implicit scheme \cref{eq:irk}, we can expand both sides of \cref{eq:flow} into the corresponding Taylor series around $h = 0$.
First,
\begin{equation}\label{eq:exasolu}
\begin{aligned}
\phi_{h,\fRHS{}}(\vec{x})&=\vec{x}+h\fRHS{}(\vec{x})+\frac{h^2}{2}\fRHS{}'\fRHS{}(\vec{x})+\frac{h^3}{6}(\fRHS{}''(\fRHS{},\fRHS{})(\vec{x})+\fRHS{}'\fRHS{}'\fRHS{}(\vec{x}))\\
&+ \frac{h^4}{24}(\fRHS{}'''(\fRHS{},\fRHS{},\fRHS{})(\vec{x})+3\fRHS{}''(\fRHS{}'\fRHS{},\fRHS{})(\vec{x})+\fRHS{}'\fRHS{}''(\fRHS{},\fRHS{})(\vec{x})+\fRHS{}'\fRHS{}'\fRHS{}'\fRHS{}(\vec{x})) +\cdots .
\end{aligned}
\end{equation}
Here, $\fRHS{}'(\vec{x})$ is a linear map (the Jacobian); the second order derivative $\fRHS{}''(\vec{x})$ is a symmetric bilinear map; and so on for higher order derivatives described as tensors.
We remark that a general expansion \cref{eq:exasolu} can be obtained by Lie derivatives. Next, we expand the  unrolled implicit scheme \cref{eq:irk} as
\begin{equation}\label{eq:numsolu}
\Phi_{h,\fRHS{}_h}^L(\vec{x}) = \vec{x} + hd_{1,\fRHS{}_h}(\vec{x}) + h^2d_{2,\fRHS{}_h}(\vec{x}) + h^3d_{3,\fRHS{}_h}(\vec{x}) +  \cdots,
\end{equation}
where the functions $d_{j,\fRHS{}_h}$ are given --and typically composed of $\fRHS{}_h$ and its derivatives--, and can be calculated by applying B-series~\cite{chartier2010algebraic} on equation \cref{eq:irk}. For consistent integrators, we have
\begin{equation*}
d_{1,\fRHS{}_h}(\vec{x}) = \fRHS{}_h(\vec{x}) = \fRHS{}_0(\vec{x})+h\fRHS{}_1(\vec{x})+h^2\fRHS{}_2(\vec{x})+\cdots.
\end{equation*}
Furthermore, in $h^id_{i,\fRHS{}_h}(\vec{x}) $, the powers of $h$ of the terms containing $\fRHS{}_k$ is at least $k+i$. Thus, the coefficient of $h^{k+1}$ in \cref{eq:numsolu} is
\begin{equation*}
\fRHS{}_k+ \cdots,
\end{equation*}
where the ``$\cdots$'' indicates residual terms composed of $\fRHS{}_j$ with $j< k$ and their derivatives. 
A comparison of equal powers of $h$ \cref{eq:exasolu} and \cref{eq:numsolu} then yields recursively the functions $\fRHS{}_k$ in terms of $\fRHS{}$ and its derivatives. 
Some examples are included in \cref{app:Calculation of IMDE} to illustrate this process. Here, we denote the truncation as
\begin{equation*}\label{eq:trunc}
\fRHS{}_h^K(\vec{y}) = \sum_{k=0}^K h^k \fRHS{}_k(\vec{y}).
\end{equation*}
The IMDE is obtained by paper-and-pencil formal expansion given $\fRHS{}$ and a numerical scheme of choice, and thus is inaccessible in practice due to the unknown true governing function. 
Nevertheless, we will be able to conclude the uniqueness of the solution of the learning task and analyse the numerical integration in ODE-nets.

\subsection{Main results}
We now show that, under reasonable assumptions, training an ODE-net using an unrolled implicit scheme returns a close approximation of the IMDE for the underlying numerical method.

We first set some notation: For a compact subset $\mathcal{K} \subset \C^D$ and the complex ball $\mathcal{B}(\vec{x},r) \subset \C^D$ of radius $r>0$ centered at $\vec{x}\in\C^D$, we define the $r$-dilation of $\mathcal{K}$ as $\mathcal{B}(\mathcal{K}, r) = \bigcup_{x \in \mathcal{K}} \mathcal{B}(\vec{x},r)$.
We will work with $l_{\infty}$- norm on $\C^D$, denote $\norm{\cdot} = \norm{\cdot}_{\infty}$, and for an analytic vector field $\fRHS{}$, define
\begin{equation*}
\norm{\fRHS{}}_{\mathcal{K}} = \sup_{x\in\mathcal{K}}\norm{\fRHS{}(\vec{x})}.
\end{equation*}
Now we present the main result, which implies that the unrolled implicit ODE-net approximates the IMDE.

\begin{namedTheorem}[\textbf{The unrolled approximation approaches the IMDE}]
\label{the:implicit imde}
Consider the dynamical system \cref{eq:ODE2}, a consistent implicit Runge-Kutta scheme $\Phi_h$ \cref{eq:rk},
and its unrolled approximation $\Phi_h^{L}$ (\cref{eq:irk} or \cref{eq:irk_nr}).
Let $\fRHS{}_{\theta}$ be the network learned by optimizing \cref{eq:unrolledloss}.
For $\vec{x} \in \R^D$, $r_1, r_2 >0$, we denote
\begin{equation}\label{eq:L}
\mathcal{L} = \norm{\brac{\Phi_{h,\fRHS{}_{\theta}}^{L}}^s -\phi_{sh,\fRHS{}}}_{\mathcal{B}(\vec{x}, r_1)}/\Delta t,
\end{equation}
and suppose the true vector field $\fRHS{}$ and the learned vector field $\fRHS{}_{\theta}$ are analytic and 
satisfy $\norm{\fRHS{}}_{\mathcal{B}(\vec{x},r_1+r_2)}\leq m, \norm{\fRHS{}_{\theta}}_{\mathcal{B}(\vec{x},r_1+r_2)} \leq m$.
Then, there exists a uniquely defined vector field $\fRHS{}_h^K$, i.e., the truncated IMDE of $\Phi_{h}^L$, such that, if $0<\Delta t<\Delta t_0$,
\begin{equation}\label{eq:error}
\begin{aligned}
&\norm{\fRHS{}_{\theta}(\vec{x}) - \fRHS{}_h^K(\vec{x})} \leq c_1 m e^{-\gamma_1/\Delta t^{1/q}} + \frac{e}{e-1} \mathcal{L},\\
\end{aligned}
\end{equation}
where the integer $K=K(h)$ and the constants $\Delta t_0$, $q$, $\gamma_1$, $c_1$ depend only on $m/r_1$, $m/r_2$, $s$, $\Phi_h$ and the implicit solver\footnote{
The constants here depend on the choice of solver
(specifically, on the constants $b_1, b_2, b_3$ in \cref{asm:int}).
However, since the first term in \cref{eq:error} is very small,
the constants contained have little effect on the results.
}

\end{namedTheorem}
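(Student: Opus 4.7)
The plan is to follow the classical Hairer--Lubich backward error analysis strategy, adapted to the inverse direction and to the truncated implicit iteration. The proof has three ingredients: (i) constructing an optimally-truncated IMDE $\fRHS{}_h^K$ so that $\Phi_{h,\fRHS{}_h^K}^L$ reproduces the true flow up to a subexponentially small residual; (ii) propagating this single-step estimate through the $s$-fold composition via a telescoping identity, which is what will produce the $e/(e-1)$ amplification factor; and (iii) applying a local inversion of the map $g \mapsto \Phi_{h,g}^L$ to convert closeness of numerical flows into closeness of vector fields.

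For (i), I would construct $\fRHS{}_h^K = \sum_{k=0}^K h^k \fRHS{}_k$ by recursively matching the Taylor expansions \cref{eq:exasolu} and \cref{eq:numsolu}, as sketched in \cref{sec:IMDEunrolledImplicit}. The key quantitative input is a system of Cauchy estimates on a nested family of shrinking complex balls $\mathcal{B}(\vec{x}, r_1 + (1-k/K)\, r_2)$, controlling $\norm{h^k \fRHS{}_k}$ by a geometric-type sequence whose ratio depends on $m/r_1$, $m/r_2$, $s$, and on the constants describing $\Phi_h^L$. An optimal choice $K \sim 1/h^{1/q}$ balances the residual against the tail of the series, yielding $\norm{\Phi_{h,\fRHS{}_h^K}^L(\vec{x}) - \phi_{h,\fRHS{}}(\vec{x})} \leq c\,m\,h\,e^{-\gamma_1/h^{1/q}}$ on $\mathcal{B}(\vec{x}, r_1)$.

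For (ii) and (iii), I would first expand the composition
\begin{equation*}
(\Phi_{h,\fRHS{}_\theta}^L)^s - (\Phi_{h,\fRHS{}_h^K}^L)^s = \sum_{k=0}^{s-1} (\Phi_{h,\fRHS{}_\theta}^L)^{s-1-k} \circ (\Phi_{h,\fRHS{}_\theta}^L - \Phi_{h,\fRHS{}_h^K}^L) \circ (\Phi_{h,\fRHS{}_h^K}^L)^{k},
\end{equation*}
bound the outer factors by Lipschitz constants of the form $1 + O(h)$, and sum the resulting geometric series to obtain the $e/(e-1)$ prefactor once $\Delta t = sh < \Delta t_0$. Combining with the triangle inequality against the definition of $\mathcal{L}$ from \cref{eq:L} and the IMDE residual from (i) controls $\norm{\Phi_{h,\fRHS{}_\theta}^L(\vec{x}) - \Phi_{h,\fRHS{}_h^K}^L(\vec{x})}$. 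To pass from numerical maps to vector fields, I invoke the consistency expansion $\Phi_{h,g}^L(\vec{x}) = \vec{x} + h\,g(\vec{x}) + h^2\,R[g](\vec{x})$, with $R$ analytic in $g$, to write $\fRHS{}_\theta - \fRHS{}_h^K = (\Phi_{h,\fRHS{}_\theta}^L - \Phi_{h,\fRHS{}_h^K}^L)/h + h\,(R[\fRHS{}_h^K] - R[\fRHS{}_\theta])$. The remainder term is bounded by a small multiple of $\norm{\fRHS{}_\theta - \fRHS{}_h^K}$ on a slightly larger ball (via Cauchy estimates in $g$) and can be absorbed into the left-hand side for $h$ sufficiently small, producing \cref{eq:error}.

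The main obstacle is the careful bookkeeping of nested radii: each of the three steps contracts the admissible ball, so one must partition $r_1, r_2$ among the construction of $\fRHS{}_k$, the telescoping, and the inversion so that every estimate holds simultaneously on a common ball around $\vec{x}$. A secondary difficulty arises for the Newton variant \cref{eq:irk_nr}, where the recursion involves $\fRHS{}_\theta'$, so the Lipschitz and remainder estimates must also control derivatives; this is again handled by Cauchy estimates passing from $\fRHS{}_\theta$ on $\mathcal{B}(\vec{x}, r_1+r_2)$ to its Jacobian on $\mathcal{B}(\vec{x}, r_1)$. The unrolling count $L$, the RK stage count $I$, and the choice between fixed-point and Newton iteration then enter only through the constants $\Delta t_0, c_1, \gamma_1, q$, and not the qualitative form of the bound.
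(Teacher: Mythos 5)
Your high-level plan (build the truncated IMDE via Cauchy estimates and optimal truncation $K$, propagate through the $s$-fold composition, then locally invert $g\mapsto\Phi_{h,g}^L$ to pass from flows to vector fields) matches the paper's strategy, which packages the composition stability and the inversion step into a single hypothesis on the scheme (Assumption~\ref{asm:int}) and then verifies that hypothesis separately for fixed-point and Newton unrollings (Lemmas~\ref{lem:rkfp},~\ref{lem:rknr}). However, your proposal contains a genuine gap at the final inversion step, and a misattribution of the source of the $e/(e-1)$ constant.

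First, the $e/(e-1)$ does not come from the $s$-fold telescoping. Bounding the outer factors in the telescoping sum by $(1+Ch)^j$ and summing $j=0,\dots,s-1$ gives $\tfrac{(1+Ch)^s-1}{Ch}\approx s\cdot\tfrac{e^{C\Delta t}-1}{C\Delta t}$, i.e., a factor proportional to $s$, not $e/(e-1)$; this is why the definition~\eqref{eq:L} already divides by $\Delta t=sh$. In the paper's argument the $e/(e-1)$ factor arises later, from a different geometric series: the vector-field inversion inequality (third bullet of Assumption~\ref{asm:int}) has the schematic form
\begin{equation*}
\|\hat g-g\|_{\mathcal{K}}\;\le\;\delta\;+\;\lambda\,\|\hat g-g\|_{\mathcal{B}(\mathcal{K},\rho)},\qquad \lambda\le e^{-1},\ \rho=O(h),
\end{equation*}
and one must iterate this across a chain of nested balls $\mathcal{B}(\vec{x},j\rho)$, $j=0,1,\dots,J$, with $J\sim r_1/\rho\sim 1/\Delta t$. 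Summing the resulting recursion yields $\delta\sum_{j\ge 0}e^{-j}=\tfrac{e}{e-1}\delta$ plus a leftover $e^{-J}\|\fRHS{}_\theta-\fRHS{}_h^K\|_{\mathcal{B}(\vec{x},r_1)}\approx e^{-\hat\gamma/\Delta t}\cdot 2(\eta-1)m$, which is absorbed into the subexponential term. This is where the constant really lives.

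Second, and relatedly, the step ``the remainder term is bounded by a small multiple of $\norm{\fRHS{}_\theta-\fRHS{}_h^K}$ on a slightly larger ball and can be absorbed into the left-hand side'' does not go through as stated. The Cauchy estimate forces the bound on $R[\fRHS{}_\theta]-R[\fRHS{}_h^K]$ to be phrased in terms of $\|\fRHS{}_\theta-\fRHS{}_h^K\|$ over a \emph{strictly larger} ball than the one on the left, so the term cannot be moved to the left-hand side. You must instead iterate the inequality over a shrinking family of balls exactly as above; the absorption step is precisely the nested-ball recursion you deferred to "bookkeeping of nested radii," and it is also what produces the $e/(e-1)$ prefactor and the additional subexponentially small correction. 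In short: your outline is missing the central recursion that drives the estimate, and attributes its output to the composition telescoping, where it does not originate. Once that recursion is put in place, the rest of your plan (including the Newton-variant control of $\fRHS{}_\theta'$ via Cauchy estimates, which the paper handles in Lemma~\ref{lem:rknr}) is sound.
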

\begin{proof}
The proof can be found in \cref{app:the:implicit imde}.
\end{proof}
Here, the first term on the right hand side of \cref{eq:error} is sub-exponentially small. The $\mathcal{L}$ defined in \cref{eq:L} can be regarded as a form of generalization of the learning loss \cref{eq:loss} when $M=1$ (loss \cref{eq:loss}; with different $M$ we have equivalent convergence due to the following \cref{lem:loss explain}). In this paper we mainly focus on numerical schemes, and thus we will not further quantify $\mathcal{L}$. Provided we make the additional assumption that there are sufficient many data points, the network is sufficiently large and the training finds a neural network with perfect performance, then the learning loss converges to zero and the difference between the learned ODE and the truncated IMDE converges to near-zero (as per \cref{eq:error}). We therefore claim that $\fRHS{}_{\theta}$ is a close approximation of $\fRHS{}_h^K$.

Next, we show that the teacher-forcing loss (i.e., setting $M=1$ in $\mathcal{L}_{unrolled}$ of \cref{eq:unrolledloss}) is bounded by the $M$-step shooting loss on the same data, and thus these two have the equivalent convergence.
\begin{namedLemma}[\textbf{The $M$-step shooting loss and the teacher-forcing loss have equivalent convergence}]\label{lem:loss explain}
Let $\mathcal{T} = \{\phi_{m\Delta t}(\vec{x}_n)\}_{1\leq n \leq N, 0\leq m \leq M-1}$ be the total observed data, then, there exist constants $C_1$, $C_2$, such that
\begin{equation}
\label{eq:ShootingVsTeacherThm}
\begin{aligned}
&C_1\cdot \mathcal{L}_{unrolled}\leq \sum_{x\in\mathcal{T}}\norm{\brac{\Phi_{h,\fRHS{}_{\theta}}^{L}}^s(\vec{x}) -\phi_{sh,\fRHS{}}(\vec{x})}^2_2/\Delta t^2 \leq C_2\cdot \mathcal{L}_{unrolled}.
\end{aligned}
\end{equation}
\end{namedLemma}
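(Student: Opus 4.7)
The plan is a telescoping decomposition of each multi-step shooting error into a weighted sum of single-step teacher-forcing errors along the true trajectory. For fixed $n$ and $m\in\{1,\ldots,M\}$, set $T_k := (\Phi_{h,\fRHS{}_\theta}^{L})^{(m-k)s}(\phi_{k\Delta t}(\vec{x}_n))$ for $k=0,\ldots,m$, so that $T_0 = (\Phi_{h,\fRHS{}_\theta}^{L})^{ms}(\vec{x}_n)$, $T_m = \phi_{m\Delta t}(\vec{x}_n)$, and
\begin{equation*}
T_0 - T_m \;=\; \sum_{k=0}^{m-1}(T_k - T_{k+1}).
\end{equation*}
Each $T_k - T_{k+1}$ is the image under $(\Phi_{h,\fRHS{}_\theta}^{L})^{(m-k-1)s}$ of the single-step discrepancy $(\Phi_{h,\fRHS{}_\theta}^{L})^{s}(\phi_{k\Delta t}(\vec{x}_n)) - \phi_{sh,\fRHS{}}(\phi_{k\Delta t}(\vec{x}_n))$. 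Denoting by $L_\Phi$ any uniform Lipschitz constant of $(\Phi_{h,\fRHS{}_\theta}^{L})^{s}$ on a neighborhood containing all $T_k$ and all exact trajectory pieces, this yields $\norm{T_k - T_{k+1}} \leq L_\Phi^{m-k-1}\,\norm{(\Phi_{h,\fRHS{}_\theta}^{L})^{s}(\phi_{k\Delta t}(\vec{x}_n)) - \phi_{sh,\fRHS{}}(\phi_{k\Delta t}(\vec{x}_n))}$.

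For the left inequality $C_1 \mathcal{L}_{unrolled}\leq \text{(middle term)}$, I apply Cauchy-Schwarz to $\norm{T_0-T_m}^2$, divide by $(m\Delta t)^2$, and sum over $m$ and $n$. Using $L_\Phi^{2(m-k-1)}\leq \max(1,L_\Phi^{2(M-1)})$ and the fact that each trajectory point $\phi_{k\Delta t}(\vec{x}_n)\in\mathcal{T}$ appears as an inner summand at most $M$ times (once per $m\geq k+1$), the harmonic sum $\sum_{m=1}^M 1/m$ absorbs the $m/(m\Delta t)^2=1/(m\Delta t^2)$ weighting and yields a $C_1$ depending only on $M$ and $L_\Phi$.

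For the right inequality $\text{(middle term)}\leq C_2\mathcal{L}_{unrolled}$, I proceed in the reverse direction. For each $\vec{x}=\phi_{k\Delta t}(\vec{x}_n)\in\mathcal{T}$ with $0\leq k\leq M-1$, I insert the numerical trajectory and split
\begin{equation*}
(\Phi_{h,\fRHS{}_\theta}^{L})^{s}(\vec{x}) - \phi_{sh,\fRHS{}}(\vec{x}) \;=\; \bigl[(\Phi_{h,\fRHS{}_\theta}^{L})^{s}(\vec{x}) - (\Phi_{h,\fRHS{}_\theta}^{L})^{(k+1)s}(\vec{x}_n)\bigr] + \bigl[(\Phi_{h,\fRHS{}_\theta}^{L})^{(k+1)s}(\vec{x}_n) - \phi_{(k+1)\Delta t}(\vec{x}_n)\bigr].
\end{equation*}
The first bracket equals $(\Phi_{h,\fRHS{}_\theta}^{L})^{s}(\vec{x}) - (\Phi_{h,\fRHS{}_\theta}^{L})^{s}((\Phi_{h,\fRHS{}_\theta}^{L})^{ks}(\vec{x}_n))$, which the Lipschitz bound controls by the $k$-step shooting error at $\vec{x}_n$; the second bracket is exactly a $(k+1)$-step shooting error. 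Squaring via $(a+b)^2\leq 2a^2+2b^2$, dividing by $\Delta t^2$, and summing over $\mathcal{T}$ gives a bound by $C_2 \mathcal{L}_{unrolled}$, where the $(m\Delta t)^2$ denominators are matched using $k,k+1\leq M$.

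The main technical obstacle is securing a uniform Lipschitz constant $L_\Phi$ that is finite on a region enclosing both the exact trajectory pieces and every intermediate iterate $T_k$. This follows from the analyticity and boundedness hypotheses $\norm{\fRHS{}}_{\mathcal{B}(\vec{x},r_1+r_2)},\norm{\fRHS{}_\theta}_{\mathcal{B}(\vec{x},r_1+r_2)}\leq m$ already in force in \cref{the:implicit imde}: Cauchy estimates control $\fRHS{}_\theta'$ on a slightly smaller ball, the fixed-point or Newton inner iterates in \cref{eq:irk} and \cref{eq:irk_nr} stay uniformly close to their initial values for $h$ small, and successive outer applications of $\Phi_{h,\fRHS{}_\theta}^{L}$ accumulate displacement at most linearly over the bounded time horizon $M\Delta t$. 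The resulting $C_1$, $C_2$ depend on $M$, $s$, $L_\Phi$ and the geometry of the analytic domain, but not on the particular network parameters within the prescribed class.
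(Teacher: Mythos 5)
Your proposal is correct and takes essentially the same route as the paper: the right inequality comes from inserting $(\Phi_{h,\fRHS{}_\theta}^{L})^{(k+1)s}(\vec{x}_n)$ as an intermediate point, using $(a+b)^2 \leq 2a^2+2b^2$ and a Lipschitz bound on the first bracket, while the left inequality comes from telescoping each $m$-step shooting error into $m$ single-step discrepancies propagated through powers of $(\Phi_{h,\fRHS{}_\theta}^{L})^s$ via the same Lipschitz constant, then applying Cauchy--Schwarz. The paper simply asserts a Lipschitz constant $C_L$ without the justification you give from the analyticity hypotheses; your constants are marginally tighter (a harmonic sum rather than $M$) but the argument is the same.
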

\begin{proof}
The proof can be found in \cref{app:lem:loss explain}.
\end{proof}
Since we consider variable $M$ in \cref{sec: problem setup}, 
we perform the analysis that follows for $M=1$,
and use \cref{lem:loss explain} to extend to different choices of $M$.

Next, we have the following \cref{the:hat}, which indicates that increasing the iteration number $L$ is equivalent to adjusting the approximation target to gradually approach the true target with the help of \cref{the:implicit imde}.
\begin{namedTheorem}[\textbf{Increasing the iteration number $L$ is equivalent to adjusting the approximation target to gradually approach the true target}]\label{the:hat}
Consider a consistent implicit Runge-Kutta scheme $\Phi_h$ \cref{eq:rk} and denote the IMDE\footnote{If we suspect that this sum does not converge with $L\rightarrow \infty$, we can still study the this sum formally, truncating it according to \cref{the:implicit imde}.
}
of $\Phi_h$ as $\hat{\fRHS}_h =\sum_{k=0}^{\infty} h^k\hat{\fRHS}_k$, and the corresponding IMDE via unrolled approximation $\Phi_h^{L}$ (\cref{eq:irk} or \cref{eq:irk_nr}) as $\fRHS{}_h =\sum_{k=0}^{\infty} h^k \fRHS{}_k$, respectively. Then
\begin{equation*}
\hat{\fRHS}_h - \fRHS{}_{h} = \mathcal{O}(h^{L^{*}+1}),\ \text{i.e.},  \hat{\fRHS}_k = \fRHS{}_{k} \text{ for } k= 0,\cdots, L^{*}, 
\end{equation*}
where $L^{*}=L$ for the unrolled approximation using fixed-point iteration \cref{eq:irk} and $L^{*}=2^{L+1}-2$ for the unrolled approximation using Newton-Raphson iteration \cref{eq:irk_nr}.
\end{namedTheorem}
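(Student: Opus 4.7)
The strategy is to compare the unrolled iterate $\Phi_{h,\gRHS{}}^{L}$ with the exact implicit map $\Phi_{h,\gRHS{}}$ applied to the same analytic vector field $\gRHS{}$, establish a sharp one-step estimate on their difference, and then use the two IMDE identities $\Phi_{h,\fRHS{}_{h}}^{L}(\vec{x}) = \phi_{h,\fRHS{}}(\vec{x}) = \Phi_{h,\hat{\fRHS}_{h}}(\vec{x})$ to force matching of coefficients as formal series in $h$.

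The heart of the proof is a lemma that, uniformly in $\gRHS{}$ on a suitable complex ball,
\begin{equation*}
\Phi_{h,\gRHS{}}^{L}(\vec{x}) - \Phi_{h,\gRHS{}}(\vec{x}) = \mathcal{O}(h^{L^{*}+2}),
\end{equation*}
with $L^{*} = L$ for fixed-point iteration and $L^{*} = 2^{L+1}-2$ for Newton--Raphson. In the fixed-point case I would set $\vec{e}_{i}^{l} := \vec{v}_{i}^{l} - \vec{v}_{i}$ (with $\vec{v}_{i}$ the exact stage), subtract \cref{eq:rk1} from the update in \cref{eq:irk}, and use Lipschitz continuity of $\gRHS{}$ to get $\max_{i}\norm{\vec{e}_{i}^{l}} \leq Ch \cdot \max_{j}\norm{\vec{e}_{j}^{l-1}}$; since $\vec{e}_{i}^{0} = \vec{x} - \vec{v}_{i} = \mathcal{O}(h)$, induction yields $\vec{e}_{i}^{L} = \mathcal{O}(h^{L+1})$ and hence the one-step difference is $\mathcal{O}(h^{L+2})$. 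For Newton--Raphson, the essential observation is that the residual $F(\vec{V}) = \vec{V} - \vec{X} - h\mathcal{A}(\vec{V})$ displayed in the paper has $F'(\vec{V})^{-1} = \mat{I} + \mathcal{O}(h)$ and, crucially, $F''(\vec{V}) = -h\mathcal{A}''(\vec{V}) = \mathcal{O}(h)$. Plugging this into the Newton remainder identity $\vec{V}^{l} - \vec{V}^{*} = \tfrac{1}{2} F'(\vec{V}^{l-1})^{-1} F''(\xi)(\vec{V}^{l-1} - \vec{V}^{*})^{\otimes 2}$ yields the enhanced recursion $\norm{\vec{e}^{l}} \leq Ch\norm{\vec{e}^{l-1}}^{2}$; writing $\norm{\vec{e}^{l}} \leq h^{a_{l}}$, the recurrence $a_{l} \geq 2a_{l-1}+1$ from $a_{0} = 1$ gives $a_{L} \geq 2^{L+1}-1$, so the one-step difference is $\mathcal{O}(h^{2^{L+1}})$.

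Granted this lemma, the theorem follows from a short formal power-series manipulation. Applying the lemma with $\gRHS{} = \fRHS{}_{h}$ and using both IMDE definitions,
\begin{equation*}
\Phi_{h,\hat{\fRHS}_{h}}(\vec{x}) - \Phi_{h,\fRHS{}_{h}}(\vec{x}) = \Phi_{h,\fRHS{}_{h}}^{L}(\vec{x}) - \Phi_{h,\fRHS{}_{h}}(\vec{x}) = \mathcal{O}(h^{L^{*}+2}).
\end{equation*}
On the other hand, the B-series expansion $\Phi_{h,\gRHS{}}(\vec{x}) = \vec{x} + h\gRHS{}(\vec{x}) + \sum_{j \geq 2} h^{j} T_{j}(\gRHS{})(\vec{x})$, linearised in its vector-field argument around $\fRHS{}_{h}$, yields $\Phi_{h,\hat{\fRHS}_{h}}(\vec{x}) - \Phi_{h,\fRHS{}_{h}}(\vec{x}) = h(\hat{\fRHS}_{h} - \fRHS{}_{h})(\vec{x}) + \mathcal{O}(h^{2}(\hat{\fRHS}_{h} - \fRHS{}_{h}))$. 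Writing $\hat{\fRHS}_{h} - \fRHS{}_{h} = \sum_{k\geq 0} h^{k} \vec{\delta}_{k}$ and equating the coefficient of $h^{k+1}$ for $k = 0, 1, \ldots, L^{*}$ yields a triangular recursion expressing $\vec{\delta}_{k}$ in terms of $\vec{\delta}_{0}, \ldots, \vec{\delta}_{k-1}$ and a vanishing right-hand side, forcing $\vec{\delta}_{0} = \cdots = \vec{\delta}_{L^{*}} = 0$, which is the claim.

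The main obstacle I anticipate is the enhanced Newton recursion. The generic quadratic-convergence estimate $\norm{\vec{e}^{l}} \leq C\norm{\vec{e}^{l-1}}^{2}$ alone would give only $L^{*} = 2^{L}-1$, off by a factor of two in the exponent; the extra factor of $h$ needed to reach $2^{L+1}-2$ must be extracted from the explicit $h$-dependence of $F$, $F''$, and $F'^{-1}$, and tracked uniformly across all $L$ iterations. The rest of the argument---the fixed-point bound and the power-series bookkeeping---is routine once the Newton-case lemma is in hand.
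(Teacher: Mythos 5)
Your proposal is correct and follows essentially the same route as the paper's proof: establish a convergence lemma for the unrolled iteration (the paper's \cref{lem:fp0} and \cref{lem:nr0}), giving a one-step difference of order $\mathcal{O}(h^{L^*+2})$, and then use the two IMDE identities $\Phi_{h,\hat{\fRHS}_h}=\phi_{h,\fRHS{}}=\Phi^L_{h,\fRHS{}_h}$ to force agreement of the formal-series coefficients. The only cosmetic differences are that you derive the Newton exponent via the Newton remainder identity together with $F''=\mathcal{O}(h)$ (rather than the paper's direct stage-by-stage estimate in \cref{lem:nr0}, which encodes the same extra factor of $h$ per iteration), and you compare leading terms of $\hat{\fRHS}_h-\fRHS{}_h$ directly rather than running the paper's induction on the truncated IMDE $\fRHS{}_h^K$.
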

\begin{proof}
The proof can be found in \cref{app:the:hat}.
\end{proof}
Additionally, with the tools of IMDEs, we can obtain the order of convergence for learning ODEs with unrolled implicit integration:
\begin{namedTheorem}[\textbf{Order of convergence for learning ODEs}]\label{the:error}
With the notation and under the conditions of \cref{the:implicit imde} and \cref{the:hat}, if $\Phi_h$ is of order $p$, i.e., $\Phi_{h}(\vec{x})=\phi_{h}(\vec{x})+\mathcal{O}(h^{p+1})$, and $L^{*}+1\geq p$, then, 
\begin{equation*}
\norm{\fRHS{}_{\theta}(\vec{x}) - \fRHS{}(\vec{x})}\leq c_2mh^p + \frac{e}{e-1}\mathcal{L},
\end{equation*}
where the constant $c_2$ depends only on $m/r_1$, $m/r_2$, $s$, $\Phi_h$ and the implicit solver.
\end{namedTheorem}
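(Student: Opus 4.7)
The plan is to combine \cref{the:implicit imde} and \cref{the:hat} with a Cauchy-estimate bound on the first nontrivial coefficient of the IMDE, and then use the order-$p$ hypothesis to kill all lower-order perturbation terms.

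First I would invoke \cref{the:implicit imde} to obtain
\begin{equation*}
\norm{\fRHS{}_\theta(\vec{x}) - \fRHS{}_h^K(\vec{x})} \leq c_1 m\, e^{-\gamma_1/\Delta t^{1/q}} + \frac{e}{e-1}\mathcal{L},
\end{equation*}
so that the problem reduces to controlling $\norm{\fRHS{}_h^K(\vec{x}) - \fRHS{}(\vec{x})}$. By the triangle inequality, a bound of the form $c\, m\, h^p$ on this second quantity, together with the observation that the sub-exponentially small term $e^{-\gamma_1/\Delta t^{1/q}}$ is dominated by $h^p$ for $\Delta t$ small enough (recall $\Delta t = sh$), would yield the stated conclusion after renaming constants.

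Next I would use the order-$p$ assumption on $\Phi_h$ to characterize the IMDE of the exact implicit scheme. Since $\Phi_h(\vec{x}) = \phi_h(\vec{x}) + \mathcal{O}(h^{p+1})$, matching powers of $h$ in the defining identity $\Phi_{h,\hat{\fRHS{}}_h}(\vec{x}) = \phi_{h,\fRHS{}}(\vec{x})$ forces $\hat{\fRHS{}}_0 = \fRHS{}$ and $\hat{\fRHS{}}_k = 0$ for $k = 1, \ldots, p-1$. Now \cref{the:hat} gives $\fRHS{}_k = \hat{\fRHS{}}_k$ for $k = 0, \ldots, L^*$, and the hypothesis $L^*+1 \geq p$ (i.e.\ $L^* \geq p-1$) ensures that all the vanishing coefficients of $\hat{\fRHS{}}_h$ up through $h^{p-1}$ are also vanishing coefficients of $\fRHS{}_h$. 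Consequently
\begin{equation*}
\fRHS{}_h^K(\vec{x}) - \fRHS{}(\vec{x}) = \sum_{k=p}^{K} h^k \fRHS{}_k(\vec{x}).
\end{equation*}

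Finally, I would bound this tail using Cauchy-type estimates on the $\fRHS{}_k$. The coefficients $\fRHS{}_k$ are the same objects that appear in the proof of \cref{the:implicit imde}, and that proof must already establish uniform bounds of the form $\norm{h^k \fRHS{}_k}_{\mathcal{B}(\vec{x},r)} \leq C\, m\, (h/h_0)^k$ for some $h_0$ depending on $m/r_1, m/r_2, s, \Phi_h$ and the implicit solver (this is how the sub-exponentially small bound on the truncation remainder in \cref{the:implicit imde} is extracted). Summing the geometric-like tail from $k=p$ gives $\norm{\fRHS{}_h^K - \fRHS{}}_{\mathcal{B}(\vec{x},r_1)} \leq c'\, m\, h^p$, provided $h$ is small enough that the geometric factor is bounded (this is already in force under the hypotheses of \cref{the:implicit imde}). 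Combining everything and absorbing $c_1 m\, e^{-\gamma_1/\Delta t^{1/q}}$ into $c_2 m\, h^p$ produces the claimed estimate.

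The main obstacle I anticipate is not conceptual but bookkeeping: one must ensure that the Cauchy bounds on the $\fRHS{}_k$ (which come from the recursive matching of \cref{eq:exasolu} and \cref{eq:numsolu}) are quantitatively compatible with the constants already implicit in \cref{the:implicit imde}, so that the resulting $c_2$ depends only on the quantities listed in the statement. Since the proof of \cref{the:implicit imde} (via the standard IMDE framework of \cite{zhu2022on} and B-series) already carries out essentially this estimate to get a sub-exponentially small truncation remainder at the optimally chosen $K$, recycling those bounds at the fixed truncation index corresponding to $k=p$ should suffice.
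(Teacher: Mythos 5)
Your proposal is correct and follows essentially the same route as the paper. The paper's (very terse) proof cites \cref{the:implicit imde}, \cref{the:hat}, and a separate \cref{lem:order} which is exactly the fact you re-derive from the order-$p$ hypothesis (that the exact-scheme IMDE satisfies $\hat{\fRHS}_0=\fRHS{}$ and $\hat{\fRHS}_k=0$ for $1\leq k\leq p-1$); the tail bound $\norm{\sum_{k=p}^K h^k\fRHS{}_k}\leq c'\,m\,h^p$ that you anticipate is precisely the second estimate already recorded in \cref{lem:difference} and used in the proof of \cref{the:implicit imde}, so your Cauchy-estimate step lands on the bound the paper already has in hand.
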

\begin{proof}
The proof can be found in \cref{app:the:error}.
\end{proof}

\section{Implementation of implicit scheme}

As discussed in \cref{sec: problem setup}, one has to exploit a root-finding algorithm to solve the implicit equation \cref{eq:rk1} for an implementation of \cref{eq:rk}.
However, a drawback is that the iteration number, or stopping criterion, should usually be determined in advance and fixed during training.
According to \cref{the:implicit imde} and \cref{the:hat},
different iteration numbers lead to different approximation targets
and increasing the iteration number results in a more accurate target.
Therefore, our goal is to provide {\em an adaptive algorithm} that increases the iteration number $L$,
such that the error of the unrolled approximation is less than the current learning loss, 
thereby increasing computational efficiency while preserving accuracy.

\begin{algorithm}
\caption{\textbf{Training with adaptive iteration}}
\label{Training with adaptive iteration}
\begin{algorithmic}[1]
\STATE{Initialization: $L$, and a neural network $\fRHS{}_{\theta}$ with trainable parameters.}
\FOR{each training epoch}

    \STATE{Compute $\text{Loss} =\frac{1}{D \cdot N\cdot M}\mathcal{L}_{unrolled}$, where $D$ is the dimension.
    \label{algstep:computeLoss}}
    \STATE{Let $\theta \leftarrow \text{optimizer}(
                \theta,
                \text{lr},
                \frac{\partial \text{Loss}}{\partial \theta}
            )$
    to update neural network parameters, where $\text{lr}$ is the learning rate.
    \label{algstep:paramUpdate}
    }
    \IF{adjust iteration number \label{algstep:adjustIterNum}}
    \STATE{
        $\delta = \frac{1}{D \cdot N\cdot M}\sum_{n=1}^N\sum_{m=1}^{M} \norm{\big(\Phi_{h,\fRHS{}_{\theta}}^{L+1}\big)^{ms}(\vec{x}_n) - \big(\Phi_{h,\fRHS{}_{\theta}}^L\big)^{ms}(\vec{x}_n)}_2^2/(m\Delta t)^2$.
        \label{algstep:computeDelta}
    }
    \IF{$\text{Loss} < c \delta$ }
    \STATE{Increase the iteration number $L$.}
    \ENDIF
    \ENDIF

    \ENDFOR
\end{algorithmic}
\end{algorithm}

Next, we present the error quantification for an ODE-net using an unrolled implicit scheme, which will form the cornerstone for the following adaptive algorithm.

\begin{namedLemma}[\textbf{Convergence of the (``inner") implicit iteration}]\label{lem:fp}
Consider a consistent implicit Runge-Kutta scheme $\Phi_h$ \cref{eq:rk} and its approximation
$\Phi_h^{L}$
using fixed-point iteration
\cref{eq:irk} or Newton-Raphson iteration \cref{eq:irk_nr}. {Then, 
\begin{equation*}
\begin{aligned}
\mathcal{L}_{exact}^{\frac{1}{2}} :=&\left(\sum_{n=1}^N\sum_{m=1}^{M} \norm{\brac{\Phi_{h,\fRHS{}_{\theta}}}^{ms}(\vec{x}_n)- \phi_{m \Delta t}(\vec{x}_n)}_2^2/(m\Delta t)^2\right)^{\frac{1}{2}}\\
\leq & \mathcal{L}_{unrolled}^{\frac{1}{2}}+ \left(\sum_{n=1}^N\sum_{m=1}^{M} \norm{\big(\Phi_{h,\fRHS{}_{\theta}}^{L+1}\big)^{ms}(\vec{x}_n) - \big(\Phi_{h,\fRHS{}_{\theta}}^L\big)^{ms}(\vec{x}_n)}_2^2/(m\Delta t)^2 \right)^{\frac{1}{2}}+\mathcal{O}(h^{(L+1)^{*}+1}),
\end{aligned}
\end{equation*}}
where $L^{*}=L$ for the unrolled approximation using fixed-point iteration \cref{eq:irk} and $L^{*}=2^{L+1}-2$ for the unrolled approximation using Newton-Raphson iteration \cref{eq:irk_nr}.
\end{namedLemma}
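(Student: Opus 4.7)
The plan is to obtain the bound by two successive applications of Minkowski's inequality in $\ell^2$, followed by quantifying how close one extra iteration of the implicit solver is to the fully-converged implicit step.

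First, I would insert the unrolled trajectory inside the exact loss and apply Minkowski on the index set $(n,m)$ to get
\begin{equation*}
\mathcal{L}_{exact}^{1/2}\leq \mathcal{L}_{unrolled}^{1/2} + \left(\sum_{n=1}^{N}\sum_{m=1}^{M}\frac{\bigl\|(\Phi_{h,\fRHS{}_\theta})^{ms}(\vec{x}_n)-(\Phi_{h,\fRHS{}_\theta}^{L})^{ms}(\vec{x}_n)\bigr\|_2^2}{(m\Delta t)^2}\right)^{1/2}.
\end{equation*}
This isolates the bias introduced by truncating the root-finder. Next I would telescope through the $(L{+}1)$-iteration trajectory and apply Minkowski again:
\begin{equation*}
\bigl\|(\Phi_{h,\fRHS{}_\theta})^{ms}(\vec{x}) - (\Phi_{h,\fRHS{}_\theta}^{L})^{ms}(\vec{x})\bigr\| \le \bigl\|(\Phi_{h,\fRHS{}_\theta}^{L+1})^{ms}(\vec{x}) - (\Phi_{h,\fRHS{}_\theta}^{L})^{ms}(\vec{x})\bigr\| + \bigl\|(\Phi_{h,\fRHS{}_\theta})^{ms}(\vec{x}) - (\Phi_{h,\fRHS{}_\theta}^{L+1})^{ms}(\vec{x})\bigr\|.
\end{equation*}
The first summand is exactly the computable quantity appearing in the lemma; all that remains is to bound the second summand, uniformly over the data, by $\mathcal{O}(h^{(L+1)^*+1})$.

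For this last estimate I would produce a per-step bound and then propagate it across the $ms$ compositions by discrete-Gronwall/stability. For the per-step bound there are two natural routes. The direct route analyzes the inner iteration: for fixed-point iteration \cref{eq:irk}, the initial defect is $\vec{v}_i^0-\vec{v}_i = \mathcal{O}(h)$ and each iteration is a contraction with modulus $\mathcal{O}(h)$ (using the Lipschitz bound inherited from $\norm{\fRHS{}_\theta}_{\mathcal{B}(\vec{x},r_1+r_2)}\le m$), yielding $\vec{v}_i^{L+1}-\vec{v}_i=\mathcal{O}(h^{L+2})$ and hence $\Phi_h^{L+1}-\Phi_h = \mathcal{O}(h^{L+3})$, which matches $(L{+}1)^*=L{+}1$. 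For Newton iteration \cref{eq:irk_nr}, the first step already gives $\vec{v}_i^1-\vec{v}_i=\mathcal{O}(h^2)$ and quadratic convergence doubles the exponent at each subsequent step, so $\vec{v}_i^{L+1}-\vec{v}_i=\mathcal{O}(h^{2^{L+2}})$, matching $(L{+}1)^*=2^{L+2}-2$. The cleaner route is to invoke \cref{the:hat} applied with iteration count $L{+}1$: the IMDEs of $\Phi_h$ and $\Phi_h^{L+1}$ agree up to order $h^{(L+1)^*}$, and classical ODE perturbation theory then turns this into a per-step flow difference of order $h^{(L+1)^*+2}$.

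Finally, standard stability for one-step methods (Lipschitz constant $1+\mathcal{O}(h)$ for $\Phi_{h,\fRHS{}_\theta}$, which holds uniformly by the boundedness assumption on $\fRHS{}_\theta$) propagates this local bound to a global bound of $\mathcal{O}(m\Delta t\cdot h^{(L+1)^*+1})$ on the $ms$-fold composition; after dividing by $m\Delta t$ this yields the claimed $\mathcal{O}(h^{(L+1)^*+1})$ contribution in the lemma. The main obstacle I anticipate is the careful bookkeeping of the Newton case to arrive exactly at $2^{L+2}-2$ (in particular, using that starting from $\vec{v}^0=\vec{x}$ the first Newton iterate already has second-order defect rather than first-order); appealing to \cref{the:hat} for the per-step estimate avoids re-deriving this and keeps the argument uniform in $L$, $h$, and the choice of iteration.
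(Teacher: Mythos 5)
Your overall decomposition matches the paper's proof essentially exactly: Minkowski's inequality gives $\mathcal{L}_{exact}^{1/2} \leq \mathcal{L}_{unrolled}^{1/2} + \mathcal{R}_L$, a second Minkowski (equivalently your triangle-plus-Minkowski split) replaces $\mathcal{R}_L$ by the computable term plus $\mathcal{R}_{L+1}$, where $\mathcal{R}_{L+1}$ is the $\ell^2$ discrepancy between the exact implicit step and the $(L{+}1)$-unrolled step composed $ms$ times. Your ``direct route'' for bounding $\mathcal{R}_{L+1}$ (contraction of the inner iteration, then discrete Gronwall across compositions) is precisely what the paper's \cref{lem:fp0} and \cref{lem:nr0} establish, and the rest of your plan is sound.

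Two points worth correcting. First, your Newton bookkeeping has an off-by-one slip: starting from $\vec v_i^0=\vec x$ with $\vec v_i-\vec x=\mathcal{O}(h)$, the first Newton iterate already has defect $\vec v_i^1-\vec v_i=\mathcal{O}(h^3)$ (not $\mathcal O(h^2)$), because the quadratic-remainder contraction factor is itself $\mathcal{O}(h)$; the exponent recursion is $a_{l+1}=2a_l+1$ with $a_0=1$, giving $\hat V^{L+1}=\mathcal{O}(h^{2^{L+2}-1})$ and hence a per-step flow defect of $\mathcal{O}(h^{2^{L+2}})$, which after composing $ms$ steps and dividing by $m\Delta t$ yields exactly $\mathcal{O}(h^{(L+1)^*+1})$ with no slack. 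Second, your proposed ``cleaner route'' via \cref{the:hat} does not actually furnish the needed bound: \cref{the:hat} compares two IMDEs (the vector fields whose schematic flows reproduce the \emph{given} data $\phi_{h,f}$), whereas here you must bound $\bigl(\Phi_{h,f_\theta}^{L+1}\bigr)^{ms}-\bigl(\Phi_{h,f_\theta}\bigr)^{ms}$ for the fixed learned field $f_\theta$; this is a forward statement about two numerical maps applied to the same right-hand side, not an inverse statement about modified equations. Indeed the paper derives \cref{the:hat} \emph{from} \cref{lem:fp0} and \cref{lem:nr0}, so invoking it in place of them would be circular here; the direct composition estimate is both necessary and sufficient.
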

\begin{proof}
The proof can be found in \cref{app:lem:fp}.
\end{proof}

According to this inequality, we formulate our adaptive  \cref{Training with adaptive iteration}.
The core idea is to monitor the level of error, and adapt the iteration number in the training process according to \cref{lem:fp}. Essentially, \cref{Training with adaptive iteration} adjusts the approximation target, i.e., the IMDE of $\Phi_h^L$, to gradually approach the true target, i.e., the IMDE of $\Phi_h$, see \cref{fig:ill} for an illustration.

\begin{figure}[htb]
    \centering
    \includegraphics[width=0.7\linewidth]{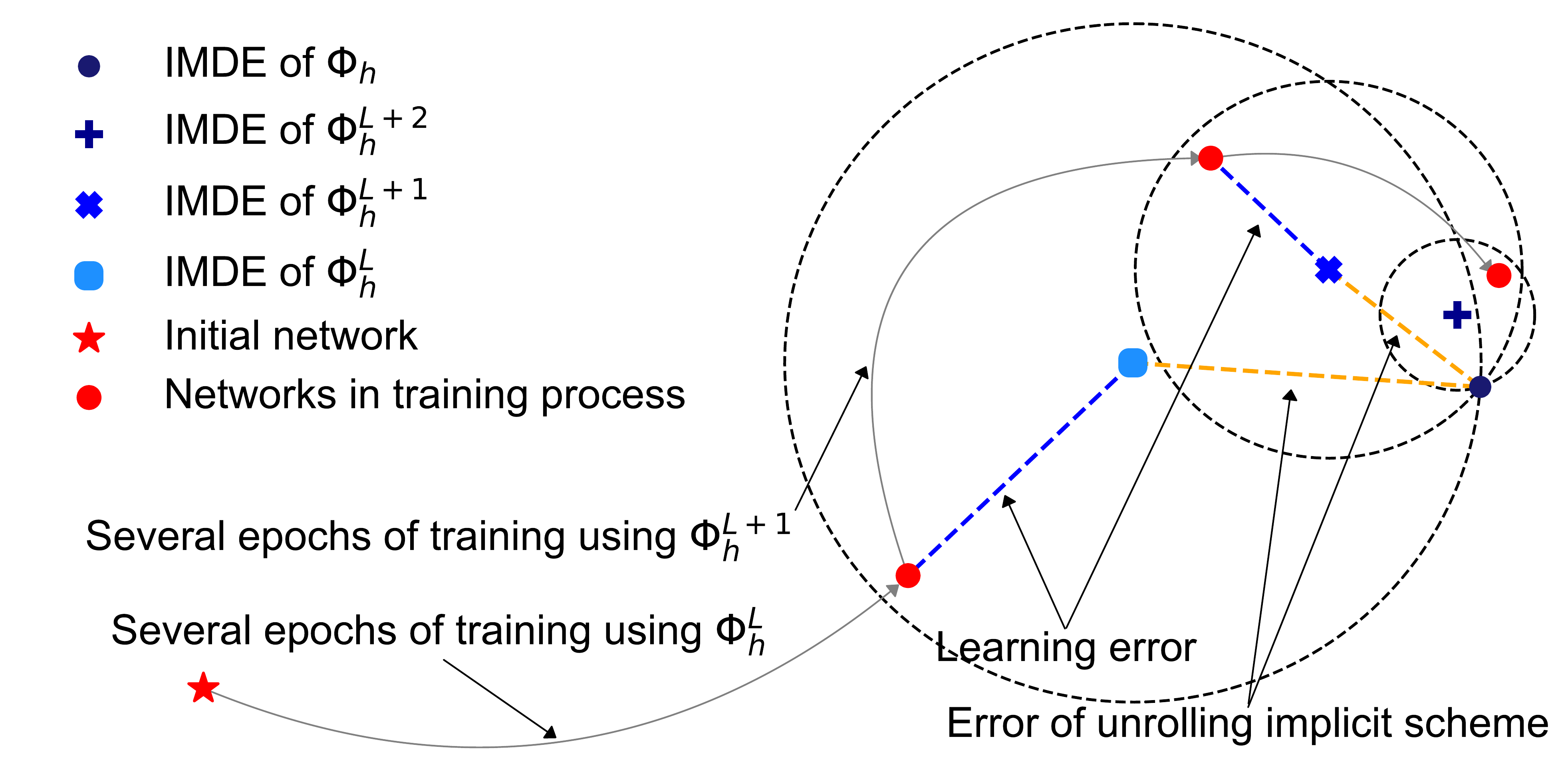}
    \caption{
        \textbf{Illustration of the proposed adaptive algorithm.} We initially employ a smaller iteration number $L$ to train the neural network, and we gradually increase $L$ as the learning error decreases for more precise approximation target.}
    \label{fig:ill}
\end{figure}

\section{Numerical examples}\label{sec:Numerical examples}
In this section, several examples are used to demonstrate the performance of the proposed algorithm and verify the theoretical analysis.
We use the PyTorch library
to implement \cref{Training with adaptive iteration} to train our neural networks.
For a given implicit solver (e.g., fixed-point iteration or Newton-Raphson iteration), we can store and backpropagate through all the iterations to obtain exact gradients for optimization.
For all experiments except the last one, we generate the state data by numerically solving the dynamical system using a high order integrator with a tiny adaptive step.
The trajectories are ``split" so that their length $M$ is 1. The last of our experiments uses real-world data~\cite{schmidt2009distilling}.
In this last case, due to the measurement errors and other non-ideal effects, we set the length of divided trajectories to $M=10$.
After training, we simulate the learned system using a high resolution numerical solver and compare it against the true system solution. 
Specifically, the numerical solver for generating data and solving learned system is the fourth-order Runge-Kutta method with a finer time step of size $0.01\cdot\Delta t$.

\subsection{Linear ODEs}\label{sec:Linear ODEs}
We first present some numerical results for  two-dimensional {\em linear} ODEs, to verify that training an ODE-net using an unrolled implicit scheme returns an approximation of the IMDE. All examples are taken from~\cite{wu2019numerical}.

\begin{table}[!htb]
 \small
  \centering
  \resizebox{\textwidth}{!}{
  \begin{tabular}{p{0.23\textwidth}p{0.19\textwidth}p{0.18\textwidth}p{0.18\textwidth}p{0.22\textwidth}}
    \hline
    Phase portrait
    &\makecell[l]{True system\\     \begin{minipage}[b]{0.1\textwidth}
		\centering
		\raisebox{-.1\height}{\includegraphics[width=\linewidth]{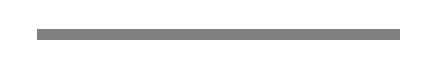}}
	\end{minipage} }
	&\makecell[l]{Learned system \\     \begin{minipage}[b]{0.1\textwidth}
		\centering
		\raisebox{-.1\height}{\includegraphics[width=\linewidth]{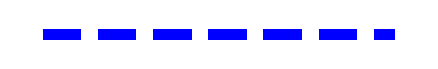}}
	\end{minipage} }
	&\makecell[l]{IMDE\\     \begin{minipage}[b]{0.1\textwidth}
		\centering
		\raisebox{-.1\height}{\includegraphics[width=\linewidth]{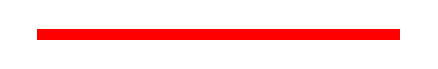}}
	\end{minipage} }
	& Settings
    \\ \hline
    \begin{minipage}[b]{0.2\textwidth}
		\centering
		\raisebox{-.5\height}{\includegraphics[width=\linewidth]{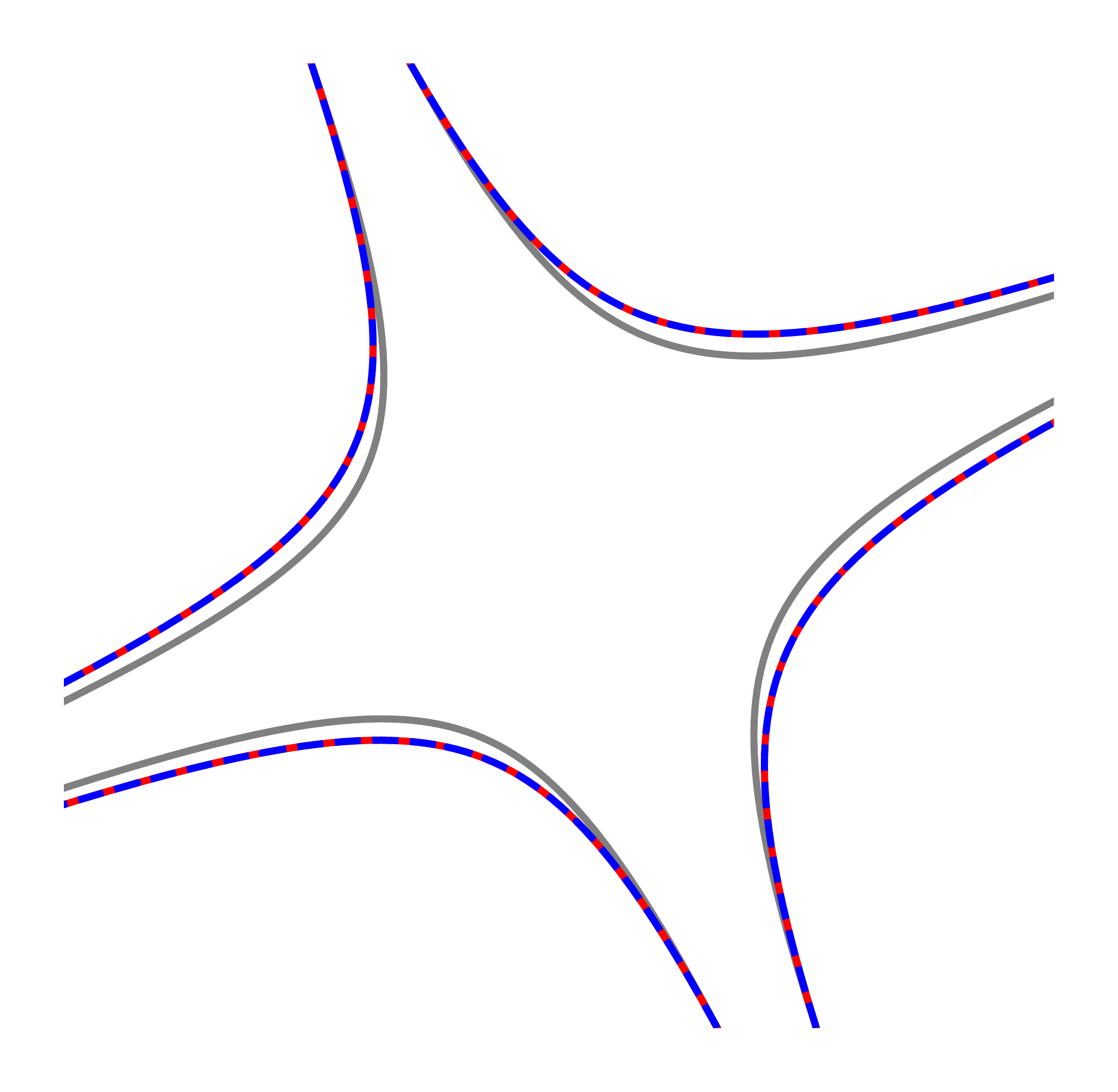}}
	\end{minipage}
    &\makecell[l]{Saddle point\\$\begin{aligned}\frac{d}{dt}p =& p+q-2\\\frac{d}{dt}q =& p-q\end{aligned}$}
    &{$\begin{aligned}\frac{d}{dt}p =&1.1035p\\ &+1.0033q\\ &-2.1068\\ \frac{d}{dt}q = &1.0033p\\ &-0.9032q\\ &-0.1002\end{aligned}$}
    &{$\begin{aligned}\frac{d}{dt}p =&1.1035p\\ &+1.0033q\\ &-2.1068\\ \frac{d}{dt}q = &1.0033p\\ &-0.9032q\\ &-0.1002\end{aligned}$}
    & \makecell[l]{$\mathcal{D} =[0,2]^2$\\ $\Delta t=0.1$\\$s=1$\\$L=0$\\ \\
    All schemes are \\equivalent in this\\ case.}
    \\ \hline
    \begin{minipage}[b]{0.2\textwidth}
		\centering
		\raisebox{-.5\height}{\includegraphics[width=\linewidth]{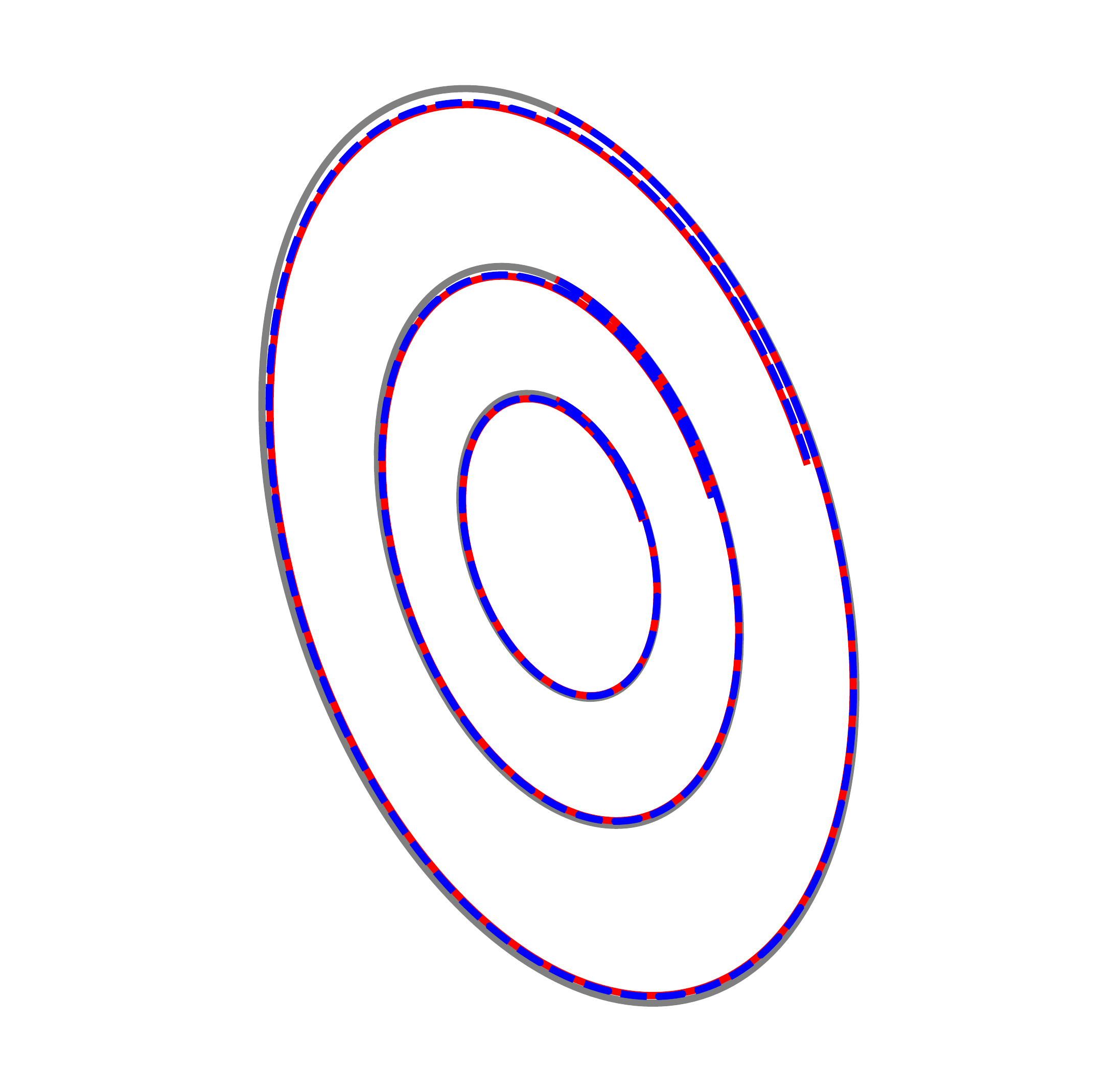}}
	\end{minipage}
    &\makecell[l]{Center point\\$\begin{aligned}\frac{d}{dt}p =& p+2q\\\frac{d}{dt}q =& -5p-q\end{aligned}$}
    &{$\begin{aligned}\frac{d}{dt}p =&0.9651p\\ &+1.9607q\\ &+0.0000\\ \frac{d}{dt}q = &-4.9017p\\ &-0.9956q\\ &+0.0000\end{aligned}$}
    &{$\begin{aligned}\frac{d}{dt}p =&0.9609p\\ &+1.9568q\\ &+0.0000\\ \frac{d}{dt}q = &-4.8920p\\ &-0.9959q\\ &+0.0000\end{aligned}$}
    & \makecell[l]{$\mathcal{D} =[-1,1]^2$\\ $\Delta t=0.12$\\$s=1$\\ \\ Implicit Trapezoidal \\ using fixed-point \\iteration with $L=1$}
    \\\hline
    \begin{minipage}[b]{0.2\textwidth}
		\centering
		\raisebox{-.5\height}{\includegraphics[width=\linewidth]{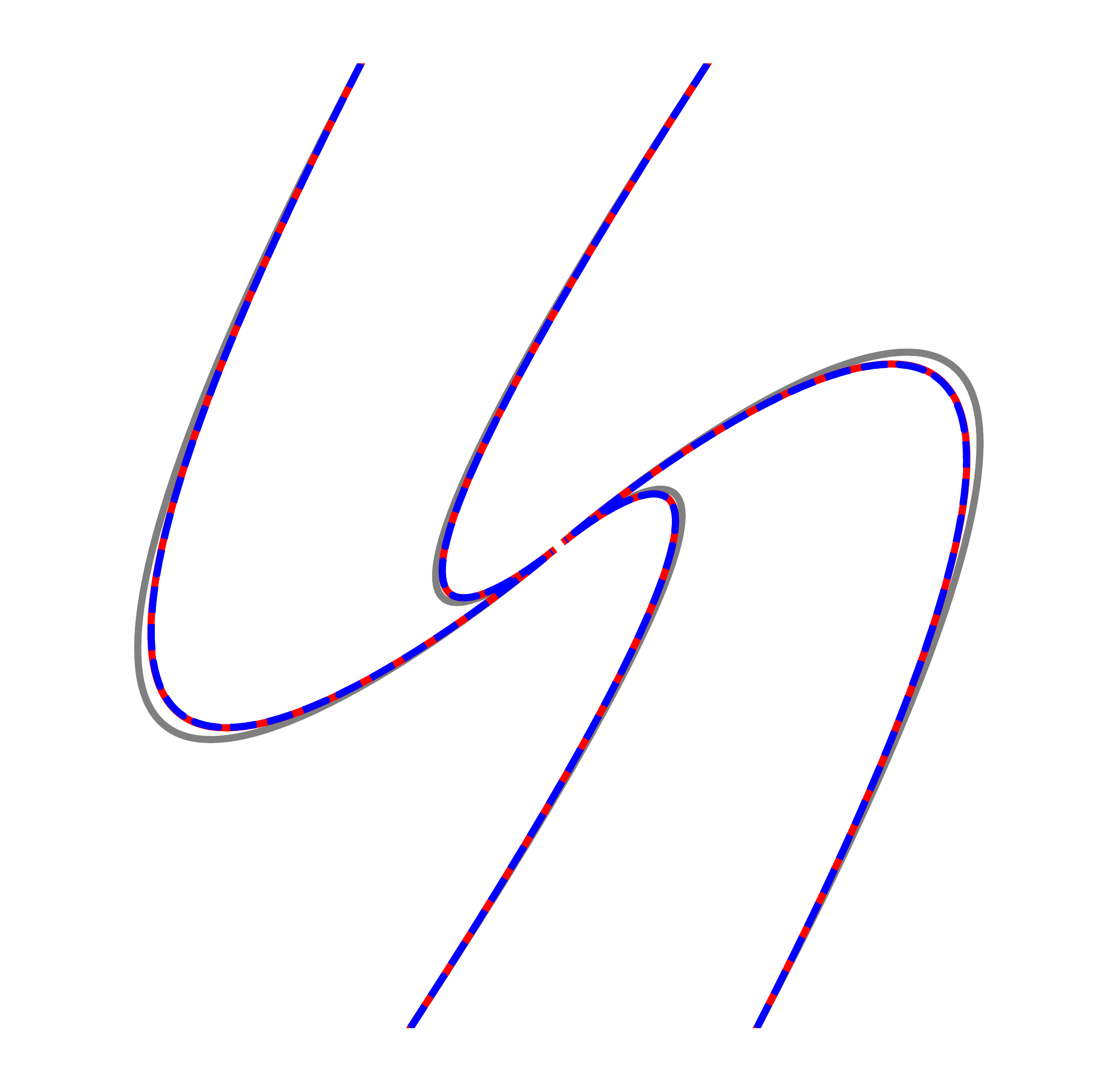}}
	\end{minipage}
    &\makecell[l]{Improper node\\{$\begin{aligned}\frac{d}{dt}p =&p-4q\\ \frac{d}{dt}q = &4p-7q\end{aligned}$}}
    &{$\begin{aligned}\frac{d}{dt}p =&0.8222p\\ &-3.7709q\\ &+0.0000\\ \frac{d}{dt}q = &3.7709p\\ &-6.7197q\\ &+0.0000\end{aligned}$}
    &{$\begin{aligned}\frac{d}{dt}p =&0.8270p\\ &-3.7771q\\ &+0.0000\\ \frac{d}{dt}q = &3.7771p\\ &-6.7272q\\ &+0.0000\end{aligned}$}
    & \makecell[l]{$\mathcal{D} =[-1,1]^2$\\ $\Delta t=0.12$\\$s=1$\\ \\ Implicit Midpoint \\using fixed-point \\iteration with $L=2$}
    \\\hline
    \begin{minipage}[b]{0.2\textwidth}
		\centering
		\raisebox{-.5\height}{\includegraphics[width=\linewidth]{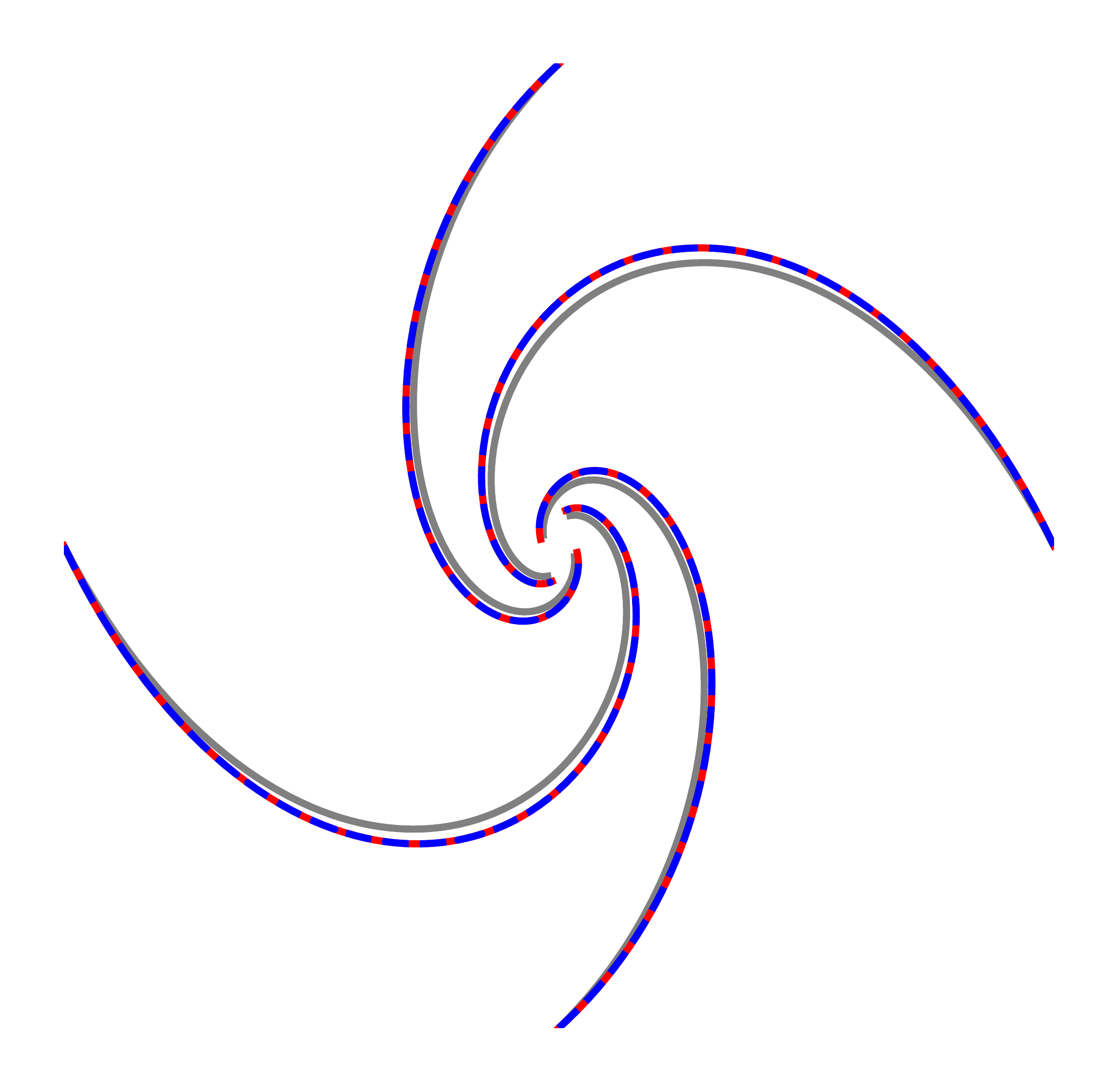}}
	\end{minipage}
    &\makecell[l]{Spiral point\\$\begin{aligned}\frac{d}{dt}p =&-p-q-1\\ \frac{d}{dt}q = &2p-q+5\end{aligned}$}
    &{$\begin{aligned}\frac{d}{dt}p =&-0.9729p\\ &-1.0503q\\ &-0.8955\\ \frac{d}{dt}q = &2.1006p\\ &-0.9729q\\ &+5.1742\end{aligned}$}
    &{$\begin{aligned}\frac{d}{dt}p =&-0.9729p\\ &-1.0504q\\ &-0.8954\\ \frac{d}{dt}q = &2.1008p\\ &-0.9729q\\ &+5.1745\end{aligned}$}
    & \makecell[l]{$\mathcal{D} =[-3,-1]\times[0,2]$\\ $\Delta t=0.05$\\$s=1$\\ \\ Implicit Euler using\\fixed-point iteration \\with $L=3$ }
    \\\hline
    \begin{minipage}[b]{0.2\textwidth}
		\centering
		\raisebox{-.5\height}{\includegraphics[width=\linewidth]{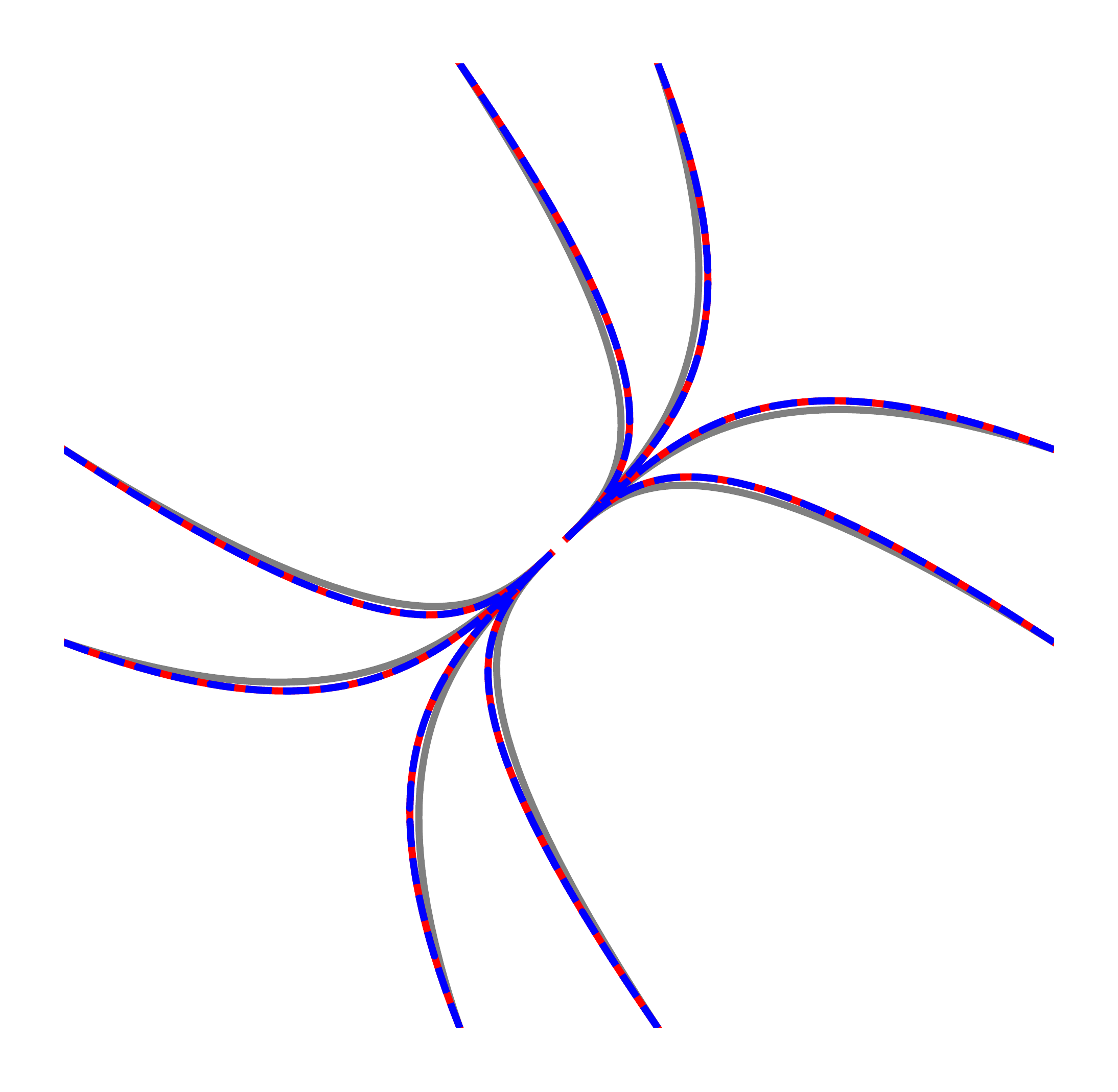}}
	\end{minipage}
    &\makecell[l]{Nodal sink\\$\begin{aligned}\frac{d}{dt}p =&-2+q-2\\ \frac{d}{dt}q = &p-2q+1\end{aligned}$}
    &{$\begin{aligned}\frac{d}{dt}p =&-2.3368p\\ &+1.2743q\\ &-2.3368\\ \frac{d}{dt}q = &1.2743p\\ &-2.3368q\\ &+1.2743\end{aligned}$}
    &{$\begin{aligned}\frac{d}{dt}p =&-2.3366p\\ &+1.2741q\\ &-2.3366\\ \frac{d}{dt}q = &1.2741p\\ &-2.3366q\\ &+1.2741\end{aligned}$}
    & \makecell[l]{$\mathcal{D} =[-2,0]\times[-1,1]$\\ $\Delta t=0.12$\\$s=1$\\ \\ Implicit Euler using\\Newton-Raphson \\ iteration with $L=1$ }
    \\ \hline
  \end{tabular}}
  \caption{
    \textbf{Discovery of linear systems.}
    The leftmost column shows the phase portraits of the true, learned and modified systems.
    The four columns on the right give the corresponding
    ODEs as well as the experiment details.
    Here, $L$ is the fixed iteration number,
    and $L=0$ means that there is no iteration;
    i.e., the scheme reduces to forward Euler
    $\Phi_{h,\fRHS{}}^0(\vec{x}) = \vec{x} + h \fRHS{}(\vec{x})$. }
  \label{tab:lin}
\end{table}

For each test in this subsection, the training data is composed of $100$ points 
    generated from a uniform distribution over a computational domain $\mathcal{D}$;
    each paired with its time-$\Delta t$ flow;
    i.e., $\{\vec{x}_n, \phi_{\Delta t}(\vec{x}_n)\}_{n=1}^{100}$, with 
    $\vec{x}_n \sim \text{Uniform}(\mathcal{D})$.
The ODE solver that used to learn the ODE
was chosen to be a single composition ($s=1$) of a chosen unrolled implicit scheme with a fixed iteration number $L$.
For the linear case, the employed neural networks have a single linear layer, i.e., we learn an affine transformation $ \fRHS{}_{\theta}(\vec{x})= \mat{W}\vec{x}+\vec{b}$, where $\mat{W}\in \R^{D\times D}, \vec{b}\in\R^D$ are the $D^2+D$ learnable parameters. 
We use full-batch Adam optimization~\cite{kingma2014adam} with a learning rate of $0.01$ to update the parameters $10^4$ times.

The detailed computational settings, descriptions of the systems 
and the corresponding numerical and analysis results are presented in \cref{tab:lin}.
Note
that Newton-Raphson iteration with $L=1$ can exactly solve the implicit linear equation
and thus higher iterations are not discussed.
As shown in the phase portraits,
the ODE-nets accurately capture the evolution of the corresponding IMDE.
In addition, the trajectories of the learned systems 
are closer to those of the Modified systems than to the those of the true ODE,
which confirms that training an ODE-net returns an approximation of the IMDE.

\subsection{Damped pendulum problem}\label{sec:dpb}
We now consider the damped pendulum problem,
\begin{equation*}
\begin{aligned}
\frac{d}{dt}p =&-\alpha p - \beta \sin q,\\
\frac{d}{dt}q =&p,\\
\end{aligned}
\end{equation*}
where $\alpha = 0.2$ and $\beta = 8.91$.
\begin{figure}[htb]
    \centering
    \includegraphics[width=0.8\linewidth]{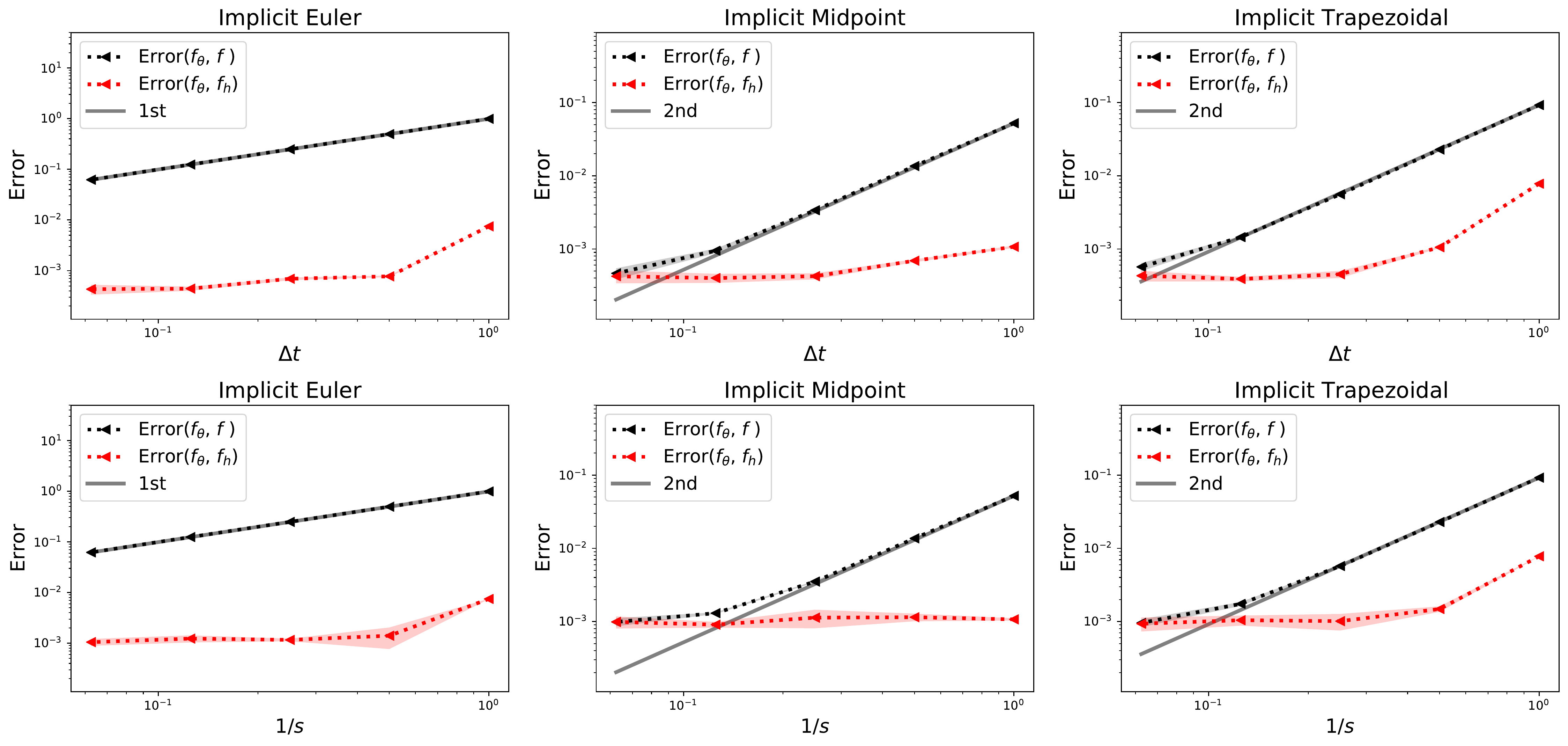}
    \caption{
    \textbf{Convergence rate test with respect to $h=\Delta t/s$ for learning the damped pendulum system.}
    Here, $\Delta t$ is the data step size,
    $h$ is the numerical scheme step size,
    and therefore $s=\Delta t/h$ is the number of scheme compositions used.
    Since error $\sim h^p = (\Delta t/s)^p$, we set $1/s$ and $\Delta t$ as the horizontal coordinates to show convergence with respect to $h$.
    As we sweep $\Delta t$,
    we keep $h$ (i.e. $s$) constant, and vice-versa.
    We see that the $\text{Error}(\fRHS{}_{\theta}, \fRHS{})$ is more than the $\text{Error}(\fRHS{}_{\theta}, \fRHS{}_h)$. 
    The order of $\text{Error}(\fRHS{}_{\theta}, \fRHS{})$ with respect to $h$ is consistent with the order of the employed numerical schemes. The results are obtained by taking the mean of $5$ independent experiments, and the shaded
    region represents one standard deviation.
    }
    \label{fig:DPerror}
\end{figure}
\noindent
We generate 90 and 10 trajectories from $t=0$ to $t=4$ for the training data and test data respectively,
with initial points randomly sampled from a uniform distribution on $\mathcal{D}=[-1.5,0]\times [-4,0]$.
For each trajectory, $4/\Delta t +1$ data points at equidistant time steps $\Delta t$ are selected and grouped in $4/\Delta t$ successive $M=1$ pairs.
Here we employ fixed-point iteration
to solve the implicit equation.
We use a feedforward neural network with two hidden layers to represent the unknown vector field, i.e.,
\begin{equation*}
\fRHS{}_{\theta}(\vec{x}) = \mat{W}_3 \texttt{tanh}(\mat{W}_2 \texttt{tanh} (\mat{W}_1 \vec{x}+\vec{b}_1)+\vec{b}_2)+\vec{b}_3,
\end{equation*}
where $\mat{W}_1\in \R^{128\times D},\ \mat{W}_2\in \R^{128\times 128},\ \ \mat{W}_3\in \R^{D\times 128},\ \vec{b}_1, \vec{b}_2\in \R^{128},\ \vec{b}_3\in\R^D$ are the $256D+128^2+256+D$ learnable parameters and $D=2$ is the state dimension. Results are collected after $10^5$ parameter updates using full-batch Adam optimization; the learning rate is set to decay exponentially with, linearly decreasing power from $10^{-2}$ to $10^{-4}$. We also include comparisons with the fixed iteration number setting, where we apply $L=5$ iterations.

\begin{figure}[!ht]
    \centering
    \includegraphics[width=\linewidth]{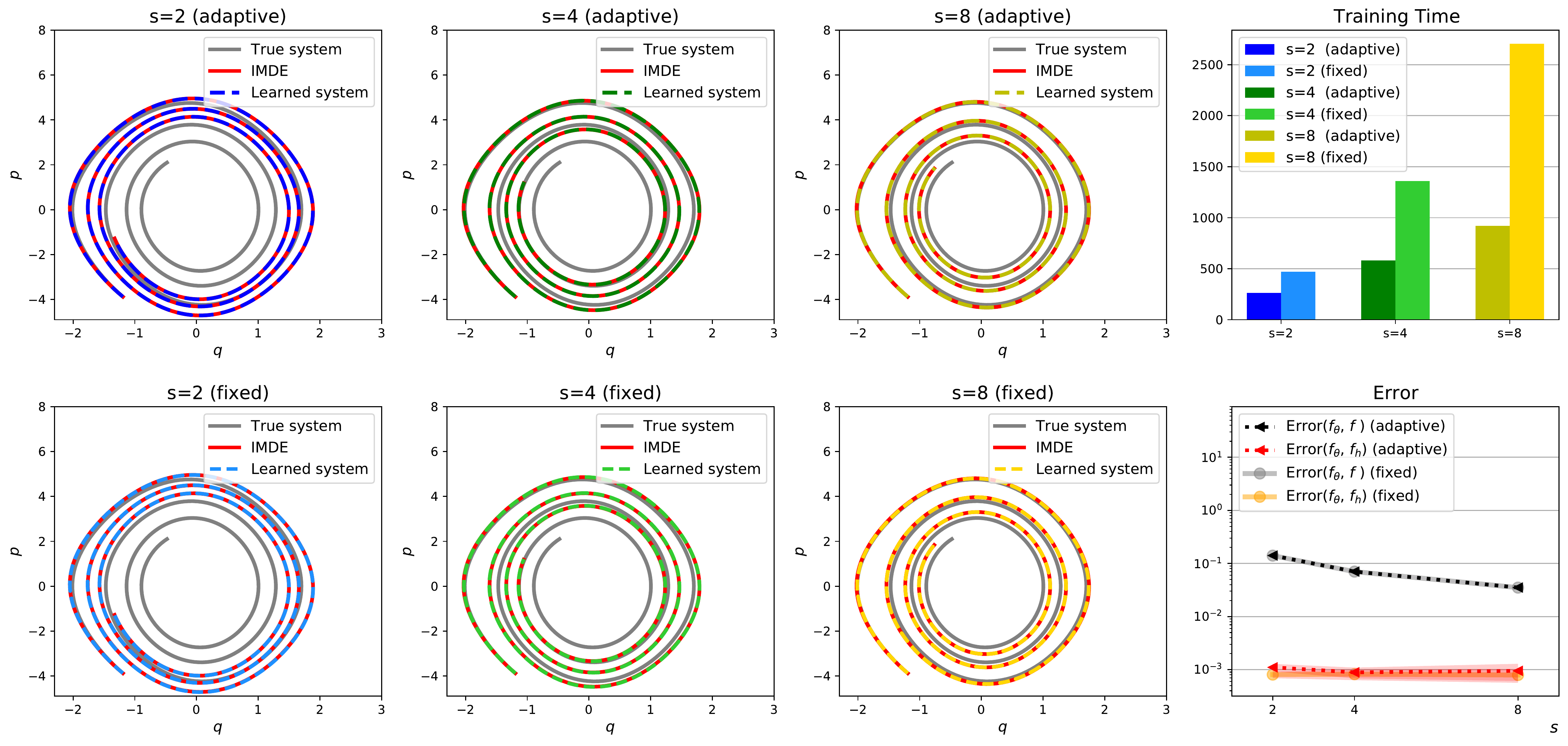}
    \caption{
        \textbf{Results for learning damped pendulum problem.}
        The integration of the learned system always matches that of the derived IMDE more than the truth, but the adaptive algorithm extracts this IMDE more quickly.
    }
    \label{fig:DP}
\end{figure}

We first verify the convergence rate with respect to the step size $h$. Here, we evaluate the average error between $\fRHS{}_{\theta}$ and $\fRHS{}$ and between $\fRHS{}_{\theta}$ and $\fRHS{}_h$ in the $l_{\infty}$- norm, i.e., 
\begin{equation}\label{eq:error of experiments}
\begin{aligned}
&\text{Error}(\fRHS{}_{\theta}, \fRHS{}) = \frac{1}{|\mathcal{T}_{test}|}\sum_{\vec{x}_n\in \mathcal{T}_{test}}  \norm{\fRHS{}_{\theta}(\vec{x}_n) - \fRHS{}(\vec{x}_n)}, \\ 
&\text{Error}(\fRHS{}_{\theta}, \fRHS{}_h) = \frac{1}{|\mathcal{T}_{test}|}\sum_{\vec{x}_n\in \mathcal{T}_{test}}  \norm{\fRHS{}_{\theta}(\vec{x}_n) - \fRHS{}_h(\vec{x}_n)}, 
\end{aligned}
\end{equation}
where $\mathcal{T}_{test} = \{\phi_{m\Delta t}(\vec{x}_n)\}_{1\leq n \leq 10,\ 0\leq m \leq 4/\Delta t, \Delta t =0.01}$ is the total test data when $ \Delta t =0.01$.
We assign various step sizes $\Delta t$ as $\Delta t=0.01\cdot2^k, \ k=0, \cdots, 4$ 
with fixed composition number $s=1$; we also use 
several composition numbers $s=2^k, \ k=0, \cdots, 4$
with fixed horizon $\Delta t =0.16$, respectively.
The errors are recorded in \cref{fig:DPerror}.
It can be seen that the $\text{Error}(\fRHS{}_{\theta}, \fRHS{})$ is markedly more than the $\text{Error}(\fRHS{}_{\theta}, \fRHS{}_h)$, 
indicating that the learned ODE-net returns an approximation of the particular IMDE rather than the true ODE.
In addition, the order of $\text{Error}(\fRHS{}_{\theta}, \fRHS{})$ with respect to $h$
is consistent with the order of the employed numerical schemes when $h$ is relatively large,
since the learning error dominates the overall error for an accurate solver.

Next, we simulate the exact solution from $t=0$ to $t=8$ using the initial condition $y_0=(-3.876,-1.193)$.
We show in \cref{fig:DP} the exact trajectories of the true system, the corresponding IMDE, and the right-hand-sides learned by ODE-net for different schemes, where $\Delta t=0.01, s=2, 4, 8$.
For all integrations, the ODE-net accurately captures the evolution of the corresponding IMDE, which again implies that the learned ODE-net returns an approximation of the IMDE.
When using a small learning time step $h=\Delta t/s$, the difference between the IMDE and the original equation is reduced, and thus the ODE-net tends to learn the true system.
In addition, we record the error and the training time on the right side of \cref{fig:DP}.
It is observed that the proposed adaptive iteration algorithm is remarkably faster than the non-adaptive, direct implementation, and requires less training wall-clock time to reach similar accuracy.

\subsection{Glycolytic oscillator}
As an example of an initial value solver employing the Newton-Raphson iteration, we consider a model of oscillations in yeast glycolysis \cite{daniels2014efficient}.
The model describes the concentrations of seven biochemical species and is defined by
\begin{equation*}
\begin{aligned}
\frac{d}{dt}S_1 =& J_0 - \frac{k_1 S_1S_6}{1+(S_6/K_1)^q},\\
\frac{d}{dt}S_2 =& 2 \frac{k_1 S_1S_6}{1+(S_6/K_1)^q} - k_2S_2(N-S_5)-k_6S-2S_5,\\
\frac{d}{dt}S_3 =& k_2S_2(N-S_5) - k_3S_3(A-S_6),\\
\frac{d}{dt}S_4 =& k_3S_3(A-S_6) - k_4S_4S_5 - \kappa(S_4-S_7),\\
\frac{d}{dt}S_5 =& k_2S_2(N-S_5) - k_4S_4S_5 - k_6S_2S_5,\\
\frac{d}{dt}S_6 =& -2 \frac{k_1 S_1S_6}{1+(S_6/K_1)^q}  +2k_3S_3(A-S_6)-k_5S_6,\\
\frac{d}{dt}S_7 =&\psi \kappa (S_4-S_7) - kS_7,
\end{aligned}
\end{equation*}
where the ground truth parameters are taken from Table 1 in
~\cite{daniels2014efficient}.

\begin{table}[htbp]
    \centering
    \resizebox{\textwidth}{!}{
    \begin{tabular}{l|cccc}
    \hline
       Methods & \makecell[c]{Implicit Midpoint \\(adaptive)}
       & \makecell[c]{Implicit Midpoint \\(fixed)}
       & \makecell[c]{Implicit Trapezoidal \\(adaptive) }
       & \makecell[c]{Implicit Trapezoidal \\(fixed) }\\
     \hline
       Training time & $1763\pm 87$ & $5879 \pm 224$ & $2264\pm 92$ & $6357\pm 167$\\
    \hline
        Error($\fRHS{}_{\theta}$, $\fRHS{}$) & 2.91e-2 $\pm$ 2.37e-3 & 2.91e-2 $\pm$ 2.76e-3 & 4.05e-2 $\pm$ 3.14e-3 & 4.09e-2 $\pm$ 3.21e-3\\
     \hline
       Error($\phi_{T, \fRHS{}_{\theta}}$, $\phi_{T, \fRHS{}}$) & 7.62e-3 $\pm$ 4.10e-3 & 7.63e-3 $\pm$ 3.70e-3 & 1.19e-2 $\pm$ 5.30e-3 & 8.74e-3 $\pm$ 5.46e-3\\
     \hline
    \end{tabular}
    }
    \caption{
    \textbf{The training time (in seconds) and global error for learning the glycolytic oscillator.}
    The results are recorded in the form of mean $\pm$ standard deviation based on 10 independent training.
    The proposed adaptive algorithm markedly decrease training time with no compromise in accuracy.
    }
    \label{tab:GO}
\end{table}

\begin{figure}[ht]
    \centering
    \includegraphics[width=\linewidth]{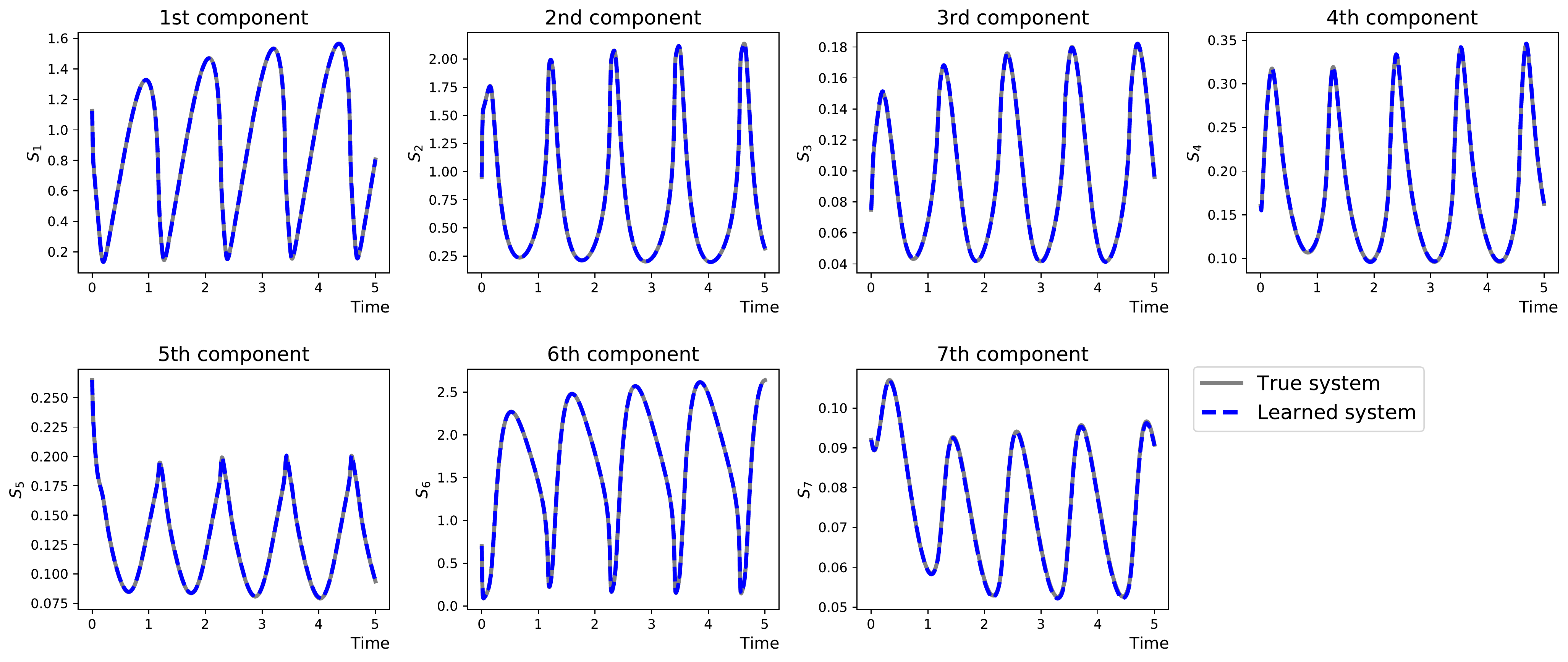}
    \caption{
        \textbf{Exact and learned dynamics of glycolytic oscillator.}
            For this evaluation experiment,
            we keep $\Delta t = 0.01$ as in training,
            but set $M=500$ rather than $M=1$
            to show that the long-term dynamics are accurate.
            Additionally, the initial condition used is not itself present in the training dataset.
      }
    \label{fig:GO}
\end{figure}

In this example, training data consists of $20$ simulations which start at $(1+\delta)\cdot \vec{x}_0$, where $\vec{x}_0=(1.125,0.95,0.075,0.16,0.265,0.7,0.092)^{\top} = (S_1, \ldots, S_7)^{\top}$ and $\delta$ is uniformly sampled from $[-0.2, 0.2]$.
On each trajectory, $500$ pairs of snapshots at $(i \Delta t, (i+1)\Delta t)$, $i = 0, \cdots, 499$, $\Delta t = 0.01$
are used as training data.
While the Newton-Raphson iteration is used, and $s$ is fixed to $2$, the chosen model architecture and hyperparameters are the same as in \cref{sec:dpb}.

After training, we record the training time and the error in \cref{tab:GO}; the error between $\fRHS{}_{\theta}$ and $\fRHS{}$ is evaluated via \cref{eq:error of experiments} with $\mathcal{T}_{test} = \{\phi_{m\Delta t}(\vec{x}_0)\}_{\ 0\leq m \leq 500}$, 
while the error between the learned and exact trajectories are evaluated by
\begin{equation*}
\text{Error}(\phi_{T, \fRHS{}_{\theta}}, \phi_{T, \fRHS{}}) = \frac{\Delta t}{T}\sum_{m=1}^{T/\Delta t}  \norm{\phi_{m\Delta t, \fRHS{}_{\theta}}(\vec{x}_0) - \phi_{m\Delta t, \fRHS{}}(\vec{x}_0)}.
\end{equation*}
As can be seen from \cref{tab:GO}, the proposed algorithm leads to a $2$-$3\times$ speedup in training without noticeable degradation in accuracy.

In addition, we use an implicit midpoint scheme to learn
the system from initial condition $\vec{x}_0$
and depict the learned and exact dynamics in \cref{fig:GO}.
It can be seen that the system learned using the proposed algorithm correctly captures the form of the dynamics, indicating that the performance of our approach is still promising for moderately high-dimensional equation discovery.

\subsection{Learning real-world dynamics}\label{sec: Learning real-world dynamics}
Finally, we use real-world data~\cite{schmidt2009distilling} to verify that the proposed algorithm can learn accurate dynamics and predict future behavior in real-world problems.
This data consists of about 500 points a single trajectory of two coupled  oscillators.
We use the first 3/4 of the trajectory for training, and the remainder for testing.
Here we assign $s=1$, and set the length of divided trajectories to $M=10$ rather than $1$ due to measurement errors and other non-ideal effects.
We train models by using full-batch 
Adam
with a learning rate of $10^{-4}$ over 5000 epochs.

\begin{figure}[htb]
    \centering
    \includegraphics[width=0.85\linewidth]{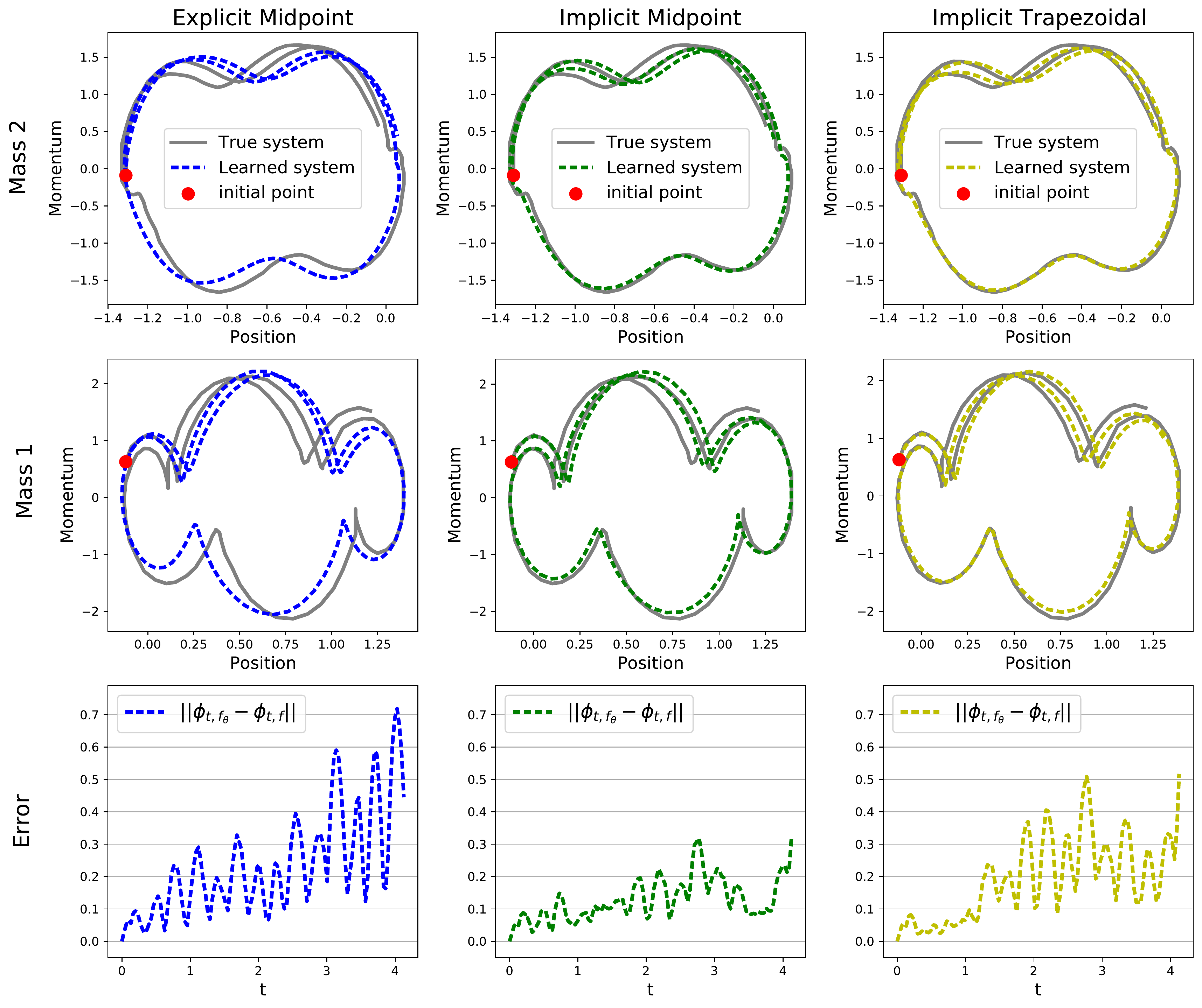}
    \caption{
        \textbf{Results of learning the real-world dynamics.}
        We find that the fine details of the trajectory
        around position $0.0$, momentum $0.1$
        are preserved more faithfully
        by the two implicit methods than by the one explicit.
        This effect is conserved across multiple experiments.
    }
    \label{fig:Real}
\end{figure}

We use the last point in the training data as the initial point to simulate the learned system, and depict the learned dynamics and test trajectory in \cref{fig:Real}. 
Despite the measurement errors and other non-ideal effects, we see that the proposed algorithm still performs robustly.
In addition, while all test schemes are of order $2$, the use of implicit schemes 
for identification preserves the phase portrait more accurately and the implicit midpoint method achieves the lowest prediction error. 
The use of implicit schemes also permits the incorporation of geometric properties such as symplecticity, symmetry and reversibility.
Although their necessity has not been mathematically proven, the results in \cref{fig:Real} show empirically better results with implicit training schemes.

\section*{Summary}
Machine learning via ODE-nets provides data-driven approaches to model and predict the dynamics of physical systems from data. Since the models are typically trained on discrete data, we have to perform a numerical integration to evaluate a loss for training. In this paper we extend previous work~\cite{zhu2022on}, in which we defined the inverse modified differential equation (IMDE). We prove that training an ODE-net templated on an unrolled implicit scheme returns an approximation of a particular IMDE. In addition, we show that the convergence with discrete step $h$ is of order $p$, where $p$ is the order of the numerical
integrator. Numerical experiments support the theoretical findings.

In addition, for learning with neural networks templated on implicit numerical integration, we propose and implement an adaptive algorithm that adjusts the iteration number of unrolled implicit integration during the training process to accelerate training. Instead of treating numerical integration of ODE-nets as a black box, our algorithm allows for finding the cheapest iteration number via monitoring the errors of the implicit solver and the learning loss. Numerical experiments show that the proposed algorithm leads to a $2-3\times$ speedup in training without any degradation in accuracy. Finally, we remark that our method naturally applies to the approaches based on ODE-net incorporating partially known physical terms (i.e., “gray box” identification). \cite{rico1994continuous,lovelett2020graybox}

Several challenges remain to be addressed in future work.
First, the Newton-Raphson iteration \cref{eq:irk_nr} requires solving a linear equation, which makes scaling to high-dimensional equation discovery expensive. One possible direction is to do Newton-Raphson steps with an iterative algorithm such as GMRES~\cite{saad1986gmres}. 

Second, our algorithm uses the interplay between the training and numerical integration to adapt the stopping criterion.
Such an idea can also be extended to efficient adaptive time-step methods, where the IMDE for adaptive steps still remains open.

Third, in classical initial value solvers, it is well known that implicit schemes have better stability~\cite{hairer1996solving}, and allow for geometric properties such as symplecticity, symmetry and 
reversibility~\cite{hairer2006geometric}.
We would like to further explore in future work how these well-known forward-integration properties of implicit methods 
produce benefits in implicitly-templated ODE-nets over the merely explicitly-templated ones.

Finally, while we provide a rigorous grounding for the proposed adaptive \cref{Training with adaptive iteration},
this suggests a family of further adaptive methods
for accelerating the neural identification of ODEs from data;
these should better exploit existing intuition and experience about the tradeoffs inherent in the methods available in the literature.
For instance, in the same way that the current learning loss sets a ceiling on the useful iteration number,
we might find that switching from unrolled ODE-nets to adjoint-differentiation NODEs does make sense, but possibly only later in the training process.
This program could be taken further towards a meta-learning approach,
where an agent is trained to make such hyperparameter decisions online, during the training of the target ODE network. \cite{Guo2021PersonalizedAG,doncevic2022recursively}

\appendix
\section{Calculation of IMDE}\label{app:Calculation of IMDE}
\subsection{Linear ODEs}
Consider a linear IVP
\begin{equation}
\label{eq:LinIVP}
\frac{d}{dt}\vec{y}(t) = \fRHS{}(\vec{y}(t)) = \mat{A}\vec{y}(t),\quad \vec{y}(0) =\vec{x},
\end{equation}
where $\vec{y}(t), \vec{b} \in \R^D$ and $\mat{A} \in \R^{D\times D}$ is invertible. It's solution at time $h$ can be given as
\begin{equation}
\label{eq:LinIVPInfSeriesIMDE}
\phi_{h,\fRHS{}}(\vec{x}) =  e^{\mat{A}h}\vec{x} = \sum_{k=0}^{\infty} \frac{h^k}{k!}\mat{A}^k \vec{x} = (\sum_{k=0}^{\infty} \frac{(-h)^k}{k!}\mat{A}^k)^{-1} \vec{x}.
\end{equation}
We now consider learning with implicit Euler scheme,
\begin{equation*}
\vec{v}_1 = \vec{x} + h \fRHS{}(\vec{v}_1),\quad \Phi_{h,\fRHS{}_h}(\vec{x}) = \vec{x} + h \fRHS{}_h(\vec{v}_1)
\end{equation*}
we have
\begin{equation*}
\Phi_{h,\fRHS{}_h}(\vec{x}) = (\mat{I}_D - h\fRHS{}_h)^{-1}(\vec{x})
\end{equation*}
where $\mat{I}_D$ is the identity map.
By $\Phi_{h,\fRHS{}_h} = \phi_{h,\fRHS{}}$, we deduce that
\begin{equation*}
\fRHS{}_h(\vec{x}) = \sum_{k=0}^{\infty} \frac{(-1)^{k}h^k}{(k+1)!}\mat{A}^{k+1}\vec{x} = \mat{A}_h \vec{x},
\end{equation*}
Additionally, if $\fRHS{}(\vec{y}) = \mat{A}\vec{y}+\vec{b}$, we have $\fRHS{}_h(\vec{x}) = \mat{A}_h\vec{x} + \mat{A}_h\mat{A}^{-1}\vec{b}$ by linear transformation $\vec{\hat{y}} = \vec{y}+\mat{A}^{-1}\vec{b}$.
\subsection{General nonlinear ODEs}
For a nonlinear ODE,
\begin{equation*}
\frac{d}{dt}\vec{y}(t) = \fRHS{}(\vec{y}(t)),\quad \vec{y}(0) =\vec{x},
\end{equation*}
we first expand the exact solution:
\begin{equation}\label{eq:exasolu_a}
\begin{aligned}
\phi_{h,\fRHS{}}(\vec{x})&=\vec{x}+h\fRHS{}(\vec{x})+\frac{h^2}{2}\fRHS{}'\fRHS{}(\vec{x})+\frac{h^3}{6}(\fRHS{}''(\fRHS{},\fRHS{})(\vec{x})+\fRHS{}'\fRHS{}'\fRHS{}(\vec{x}))\\
&+ \frac{h^4}{24}(\fRHS{}'''(\fRHS{},\fRHS{},\fRHS{})(\vec{x})+3\fRHS{}''(\fRHS{}'\fRHS{},\fRHS{})(\vec{x})+\fRHS{}'\fRHS{}''(\fRHS{},\fRHS{})(\vec{x})+\fRHS{}'\fRHS{}'\fRHS{}'\fRHS{}(\vec{x})) +\cdots .
\end{aligned}
\end{equation}
As an example, the numerical scheme is chosen to be implicit Euler scheme, using Newton-Raphson iteration with $L=1$,
\begin{equation*}
\vec{v}_1^0 \equiv \vec{x}, \quad
\vec{v}_1^1 = \vec{x} + h \fRHS{}_h(\vec{v}_1^0)+h\fRHS{}_h'(\vec{v}_1^0)(\vec{v}_1^1-\vec{v}_1^0),\quad\Phi_{h,\fRHS{}_h}^1(\vec{x}) \equiv \vec{x} + h \fRHS{}_h(\vec{v}_1^1).
\end{equation*}
We expand it as
\begin{equation*}
\begin{aligned}
\Phi_{h,\fRHS{}_h}^1(\vec{x})=& \vec{x}+h\fRHS{}_h(\vec{x}) + h^2\fRHS{}_h'\fRHS{}_h(\vec{x}) + \frac{h^3}{2}\fRHS{}_h''(\fRHS{}_h,\fRHS{}_h)(\vec{x})+h^3\fRHS{}_h'\fRHS{}_h'\fRHS{}_h(\vec{x}) \\
&+ \frac{h^4}{6}\fRHS{}_h'''(\fRHS{}_h,\fRHS{}_h,\fRHS{}_h)(\vec{x})+ h^4\fRHS{}_h''(\fRHS{}_h'\fRHS{}_h,\fRHS{}_h)(\vec{x}) + h^4\fRHS{}_h'\fRHS{}_h'\fRHS{}_h'\fRHS{}_h(\vec{x})+\cdots.\\
\end{aligned}
\end{equation*}
Substituting
\begin{equation*}
    \fRHS{}_h= \fRHS{}_0+h\fRHS{}_1+h^2\fRHS{}_2+h^3\fRHS{}_3\cdots
\end{equation*}
yields
\begin{equation*}
\begin{aligned}
\Phi_{h,\fRHS{}_h}^{2}(\vec{x})= &\vec{x} + h\fRHS{}_0+h^2(\fRHS{}_1(\vec{x})+ \fRHS{}_0'\fRHS{}_0(\vec{x}))\\
&+h^3(\fRHS{}_2(\vec{x})+\fRHS{}_1'\fRHS{}_0(\vec{x})+\fRHS{}_0'\fRHS{}_1(\vec{x})+\frac{1}{2}\fRHS{}_0''(\fRHS{}_0,\fRHS{}_0)(\vec{x}) + \fRHS{}_0'\fRHS{}_0'\fRHS{}_0(\vec{x})) \\
&+ h^4\big(\fRHS{}_3(\vec{x}) +\fRHS{}_1'\fRHS{}_1(\vec{x})+\fRHS{}_0'\fRHS{}_2(\vec{x}) +\fRHS{}_2'\fRHS{}_0(\vec{x})+ \frac{1}{2}\fRHS{}_1''(\fRHS{}_0,\fRHS{}_0)(\vec{x})+\fRHS{}_0''(\fRHS{}_1,\fRHS{}_0)(\vec{x})\\
&+\fRHS{}_1'\fRHS{}_0'\fRHS{}_0(\vec{x})+\fRHS{}_0'\fRHS{}_1'\fRHS{}_0(\vec{x})+\fRHS{}_0'\fRHS{}_0'\fRHS{}_1(\vec{x})\\
&+ \frac{1}{6}\fRHS{}_0'''(\fRHS{}_0,\fRHS{}_0,\fRHS{}_0)(\vec{x})+ \fRHS{}_0''(\fRHS{}_0'\fRHS{}_0,\fRHS{}_0)(\vec{x}) + \fRHS{}_0'\fRHS{}_0'\fRHS{}_0'\fRHS{}_0(\vec{x})\big)+ \cdots.
\end{aligned}
\end{equation*}
Comparing like powers of $h$ with expression \cref{eq:exasolu_a} yields recurrence relations for functions $\fRHS{}_k$, i.e.,
\begin{equation*}
\begin{aligned}
\fRHS{}_0(\vec{y}) &= \fRHS{}(\vec{y}),\\
\fRHS{}_1(\vec{y}) &= \frac{1}{2}\fRHS{}'\fRHS{}(\vec{y})-\fRHS{}_0'\fRHS{}_0(\vec{y}) = -\frac{1}{2}\fRHS{}'\fRHS{}(\vec{y}),\\
\fRHS{}_2(\vec{y}) &= \frac{1}{6}(\fRHS{}''(\fRHS{},\fRHS{})(\vec{y})+\fRHS{}'\fRHS{}'\fRHS{}(\vec{y})) - (\fRHS{}_1'\fRHS{}_0(\vec{y})+\fRHS{}_0'\fRHS{}_1(\vec{y})+\frac{1}{2}\fRHS{}_0''(\fRHS{}_0,\fRHS{}_0)(\vec{y}) + \fRHS{}_0'\fRHS{}_0'\fRHS{}_0(\vec{y}))\\
& = \frac{1}{6}\fRHS{}''(\fRHS{},\fRHS{})(\vec{y})+\frac{1}{6}\fRHS{}'\fRHS{}'\fRHS{}(\vec{y}),\\
\fRHS{}_3(\vec{y}) &=  \frac{1}{24}(\fRHS{}'''(\fRHS{},\fRHS{},\fRHS{})(\vec{y})+3\fRHS{}''(\fRHS{}'\fRHS{},\fRHS{})(\vec{y})+\fRHS{}'\fRHS{}''(\fRHS{},\fRHS{})(\vec{y})+\fRHS{}'\fRHS{}'\fRHS{}'\fRHS{}(\vec{y})) \\
&-\big(\fRHS{}_1'\fRHS{}_1(\vec{y})+\fRHS{}_0'\fRHS{}_2(\vec{y}) +\fRHS{}_2'\fRHS{}_0(\vec{y})+ \frac{1}{2}\fRHS{}_1''(\fRHS{}_0,\fRHS{}_0)(\vec{y})+\fRHS{}_0''(\fRHS{}_1,\fRHS{}_0)(\vec{y})\\
&+\fRHS{}_1'\fRHS{}_0'\fRHS{}_0(\vec{y})+\fRHS{}_0'\fRHS{}_1'\fRHS{}_0(\vec{y})+\fRHS{}_0'\fRHS{}_0'\fRHS{}_1(\vec{y})\\
&+ \frac{1}{6}\fRHS{}_0'''(\fRHS{}_0,\fRHS{}_0,\fRHS{}_0)(\vec{y})+ \fRHS{}_0''(\fRHS{}_0'\fRHS{}_0,\fRHS{}_0)(\vec{y}) + \fRHS{}_0'\fRHS{}_0'\fRHS{}_0'\fRHS{}_0(\vec{y})\big)\\
&=-\frac{1}{24}\fRHS{}'''(\fRHS{},\fRHS{},\fRHS{})(\vec{y})-\frac{1}{8}\fRHS{}''(\fRHS{}'\fRHS{},\fRHS{})(\vec{y})+\frac{11}{24}\fRHS{}'\fRHS{}''(\fRHS{},\fRHS{})(\vec{y})-\frac{1}{24}\fRHS{}'\fRHS{}'\fRHS{}'\fRHS{}(\vec{y}),\\
&\vdots
\end{aligned}
\end{equation*}
Note that Newton-Raphson iteration with $L = 1$ can exactly solve the implicit linear equation. In the linear case \cref{eq:LinIVP}, $\fRHS{}'=\mat{A}$ and all terms involving $\fRHS{}''$ and all higher-order derivatives are $0$, giving us
\begin{equation}
\label{eq:LinIVPInfSeriesIMDEExpandedTerms}
    \begin{aligned}
        \fRHS{}_0(\vec{y}) &= \mat{A} \cdot \vec{y} \quad\quad\quad &
        \fRHS{}_1(\vec{y}) &= -\frac{1}{2} \mat{A}^2 \cdot \vec{y} \\
        \fRHS{}_2(\vec{y}) &= \frac{1}{6} \mat{A}^3 \cdot \vec{y} \quad\quad\quad &
        \fRHS{}_3(\vec{y}) &= -\frac{1}{24} \mat{A}^4 \cdot \vec{y} \\
    \end{aligned}
\end{equation}
so we recover approximately \cref{eq:LinIVPInfSeriesIMDE} via
\begin{equation}
\label{eq:LinIVPInfSeriesIMDEExpanded}
    \begin{aligned}
        \fRHS{}_h (\vec{y})&= \fRHS{}_0(\vec{y}) + h \fRHS{}_1(\vec{y}) + h^2 \fRHS{}_2(\vec{y}) + h^3 \fRHS{}_3(\vec{y}) + \cdots
        \\
        & = \left(
            \frac{h^0}{1} \mat{A} + \frac{-h}{2} \mat{A}^2
            +\frac{h^2}{6} \mat{A}^3
            -\frac{1 h^3}{24} \mat{A}^4
            + \cdots
        \right) \cdot \vec{y}
        \\
        &\approx \sum_{k=0}^\infty \frac{(-1)^k h^k}{(k+1)!} \mat{A}^{k+1} \cdot \vec{y}.
    \end{aligned}
\end{equation}

We next present an example employing fixed-point iteration with $L=2$,
\begin{equation}
\label{eq:GenNonlinUnroll}
\vec{v}_1^0 \equiv \vec{x}, \quad
\vec{v}_1^1 = \vec{x} + h \fRHS{}(\vec{x}),\quad \vec{v}_1^2 = \vec{x} + h \fRHS{}(\vec{v}_1^1),\quad\Phi_{h,\fRHS{}_h}^2(\vec{x}) \equiv \vec{x} + h \fRHS{}_h(\vec{v}_1^2).
\end{equation}
We then use B-series to expand $\Phi_{h,\fRHS{}_h}^2(\vec{x})$:
\begin{equation*}
\begin{aligned}
\Phi_{h,\fRHS{}_h}^2(\vec{x})=& \vec{x}+h\fRHS{}_h(\vec{x}) + h^2\fRHS{}_h'\fRHS{}_h(\vec{x}) + \frac{h^3}{2}\fRHS{}_h''(\fRHS{}_h,\fRHS{}_h)(\vec{x})+h^3\fRHS{}_h'\fRHS{}_h'\fRHS{}_h(\vec{x}) \\
&+ \frac{h^4}{6}\fRHS{}_h'''(\fRHS{}_h,\fRHS{}_h,\fRHS{}_h)(\vec{x})+ h^4\fRHS{}_h''(\fRHS{}_h'\fRHS{}_h,\fRHS{}_h)(\vec{x}) + \frac{h^4}{2}\fRHS{}_h'\fRHS{}_h''(\fRHS{}_h,\fRHS{}_h)(\vec{x})+\cdots.\\
\end{aligned}
\end{equation*}
And similarly we have
\begin{equation*}
\begin{aligned}
\Phi_{h,\fRHS{}_h}^{2}(\vec{x})= &\vec{x} + h\fRHS{}_0+h^2(\fRHS{}_1(\vec{x})+ \fRHS{}_0'\fRHS{}_0(\vec{x}))\\
&+h^3(\fRHS{}_2(\vec{x})+\fRHS{}_1'\fRHS{}_0(\vec{x})+\fRHS{}_0'\fRHS{}_1(\vec{x})+\frac{1}{2}\fRHS{}_0''(\fRHS{}_0,\fRHS{}_0)(\vec{x}) + \fRHS{}_0'\fRHS{}_0'\fRHS{}_0(\vec{x})) \\
&+ h^4\big(\fRHS{}_3(\vec{x}) +\fRHS{}_1'\fRHS{}_1(\vec{x})+\fRHS{}_0'\fRHS{}_2(\vec{x}) +\fRHS{}_2'\fRHS{}_0(\vec{x})+ \frac{1}{2}\fRHS{}_1''(\fRHS{}_0,\fRHS{}_0)(\vec{x})+\fRHS{}_0''(\fRHS{}_1,\fRHS{}_0)(\vec{x})\\
&+\fRHS{}_1'\fRHS{}_0'\fRHS{}_0(\vec{x})+\fRHS{}_0'\fRHS{}_1'\fRHS{}_0(\vec{x})+\fRHS{}_0'\fRHS{}_0'\fRHS{}_1(\vec{x})\\
&+ \frac{1}{6}\fRHS{}_0'''(\fRHS{}_0,\fRHS{}_0,\fRHS{}_0)(\vec{x})+ \fRHS{}_0''(\fRHS{}_0'\fRHS{}_0,\fRHS{}_0)(\vec{x}) + \frac{1}{2}\fRHS{}_0'\fRHS{}_0''(\fRHS{}_0,\fRHS{}_0)(\vec{x})\big)+ \cdots.
\end{aligned}
\end{equation*}
Comparing like powers of $h$ with expression \cref{eq:exasolu_a}, we obtain that
\begin{equation*}
\begin{aligned}
\fRHS{}_0(\vec{y}) &= \fRHS{}(\vec{y}),\\
\fRHS{}_1(\vec{y}) &= \frac{1}{2}\fRHS{}'\fRHS{}(\vec{y})-\fRHS{}_0'\fRHS{}_0(\vec{y}) = -\frac{1}{2}\fRHS{}'\fRHS{}(\vec{y}),\\
\fRHS{}_2(\vec{y}) &= \frac{1}{6}(\fRHS{}''(\fRHS{},\fRHS{})(\vec{y})+\fRHS{}'\fRHS{}'\fRHS{}(\vec{y})) - (\fRHS{}_1'\fRHS{}_0(\vec{y})+\fRHS{}_0'\fRHS{}_1(\vec{y})+\frac{1}{2}\fRHS{}_0''(\fRHS{}_0,\fRHS{}_0)(\vec{y}) + \fRHS{}_0'\fRHS{}_0'\fRHS{}_0(\vec{y}))\\
& = \frac{1}{6}\fRHS{}''(\fRHS{},\fRHS{})(\vec{y})+\frac{1}{6}\fRHS{}'\fRHS{}'\fRHS{}(\vec{y}),\\
\fRHS{}_3(\vec{y}) &=  \frac{1}{24}(\fRHS{}'''(\fRHS{},\fRHS{},\fRHS{})(\vec{y})+3\fRHS{}''(\fRHS{}'\fRHS{},\fRHS{})(\vec{y})+\fRHS{}'\fRHS{}''(\fRHS{},\fRHS{})(\vec{y})+\fRHS{}'\fRHS{}'\fRHS{}'\fRHS{}(\vec{y})) \\
&-\big(\fRHS{}_1'\fRHS{}_1(\vec{y})+\fRHS{}_0'\fRHS{}_2(\vec{y}) +\fRHS{}_2'\fRHS{}_0(\vec{y})+ \frac{1}{2}\fRHS{}_1''(\fRHS{}_0,\fRHS{}_0)(\vec{y})+\fRHS{}_0''(\fRHS{}_1,\fRHS{}_0)(\vec{y})\\
&+\fRHS{}_1'\fRHS{}_0'\fRHS{}_0(\vec{y})+\fRHS{}_0'\fRHS{}_1'\fRHS{}_0(\vec{y})+\fRHS{}_0'\fRHS{}_0'\fRHS{}_1(\vec{y})\\
&+ \frac{1}{6}\fRHS{}_0'''(\fRHS{}_0,\fRHS{}_0,\fRHS{}_0)(\vec{y})+ \fRHS{}_0''(\fRHS{}_0'\fRHS{}_0,\fRHS{}_0)(\vec{y}) + \frac{1}{2}\fRHS{}_0'\fRHS{}_0''(\fRHS{}_0,\fRHS{}_0)(\vec{y})\big)\\
&=-\frac{1}{24}\fRHS{}'''(\fRHS{},\fRHS{},\fRHS{})(\vec{y})-\frac{1}{8}\fRHS{}''(\fRHS{}'\fRHS{},\fRHS{})(\vec{y})-\frac{1}{24}\fRHS{}'\fRHS{}''(\fRHS{},\fRHS{})(\vec{y})+\frac{23}{24}\fRHS{}'\fRHS{}'\fRHS{}'\fRHS{}(\vec{y}),\\
&\vdots
\end{aligned}
\end{equation*}

\section{Proofs} 
\subsection{Proof of Theorem \ref{the:implicit imde} (\nameref{the:implicit imde})}
\label{app:the:implicit imde}
The proof of \cref{the:implicit imde} is obtained as the proof of Theorem 3.1 
in \cite{zhu2022on} under the following \cref{asm:int}. Here we sketch the main idea in the notation used there, and then show that \cref{asm:int} holds.

\begin{namedAssumption}[Assumptions for numerical schemes]\label{asm:int}
For analytic $\gRHS{}$, $\hat{\gRHS}$ satisfying $\norm{\gRHS{}}_{\mathcal{B}(\mathcal{K},r)}\leq m$, $\norm{\hat{\gRHS}}_{\mathcal{B}(\mathcal{K},r)} \leq m$, there exist constants $b_1,b_2,b_3$ that depend only on the
scheme $\Phi_{h}$ 
and composition number $S$ such that the unrolled approximation $\Phi^L_{h}$ satisfies
\begin{itemize}
    \item for $|h|\leq h_0 = b_{1} r/m$ , $(\Phi^L_{h,\hat{\gRHS}})^S$, $(\Phi^L_{h,\gRHS{}})^S$ are analytic on $\mathcal{K}$.
    \item for $|h|\leq h_0$,
    \begin{equation*}
        \norm{\brac{\Phi^L_{h,\hat{\gRHS}}}^S-\brac{\Phi^L_{h,\gRHS{}}}^S}_{\mathcal{K}}\leq b_{2}S|h|\norm{\hat{\gRHS}-\gRHS{}}_{\mathcal{B}(\mathcal{K},r)}.
    \end{equation*}
    \item for $|h|< h_1 < h_0$,
    \begin{equation*}
    \begin{aligned}
    \norm{\hat{\gRHS}-\gRHS{}}_{\mathcal{K}} \leq & \frac{1}{S|h|}\norm{\brac{\Phi^L_{h,\hat{\gRHS}}}^S-\brac{\Phi^L_{h,\gRHS{}}}^S}_{\mathcal{K}} + \frac{b_2|h|}{h_1-|h|} \norm{\hat{\gRHS}-\gRHS{}}_{\mathcal{B}(\mathcal{K},b_3Sh_1 m)}.
    \end{aligned}
    \end{equation*}
\end{itemize}
\end{namedAssumption}
\begin{namedLemma}[Choice of truncation and estimation of error for IMDE]\label{lem:difference}
Let $\fRHS{}(\vec{y})$ be analytic in $\mathcal{B}(\mathcal{K}, r)$ and satisfies $\norm{\fRHS{}}_{\mathcal{B}(\mathcal{K}, r)} \leq m$. Suppose the numerical scheme $\Phi_{h}$ and its approximation $\Phi^L_{h}$ satisfy \cref{asm:int}. Take $\eta = \max\{6, \frac{b_2+1}{29}+1\}$, $\zeta = 10(\eta-1)$, $q = -\ln(2 b_2)/ \ln 0.912$ and let $K$ be the largest integer satisfying
\begin{equation*}
\frac{\zeta(K-p+2)^q|h|m}{\eta r} \leq e^{-q}.
\end{equation*}
If $|h|$ is small enough such that $K\geq p$, then the truncated IMDE satisfies
\begin{equation*}
\begin{aligned}
&\norm{(\Phi^L_{h,\fRHS{}_h^{K}})^S-\phi_{Sh,\fRHS{}}}_{\mathcal{K}} \leq b_2\eta m e^{2q-qp}|Sh|e^{-\gamma /|Sh|^{1/q}},\\
&\norm{\sum\nolimits_{k=p}\nolimits^Kh^k\fRHS{}_k}_{\mathcal{K}}\leq b_2 \eta m \brac{\frac{\zeta m}{b_1r }}^p (1+ 1.38^q d_p) |h|^p,\\
& \norm{\fRHS{}_{h}^K}_{\mathcal{K}} \leq (\eta-1)m,
\end{aligned}
\end{equation*}
where $\gamma = \frac{q}{e}\brac{\frac{b_1r}{\zeta m}}^{1/q}$, $d_p = p^{qp} e^{-q(p-1)}$.
\end{namedLemma}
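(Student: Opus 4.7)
The plan is to follow the classical backward error analysis template of Benettin--Giorgilli and Hairer--Lubich--Wanner, as adapted to the inverse setting in \cite{zhu2022on}, and to feed into it the abstract structural properties of the unrolled implicit scheme that are codified in \cref{asm:int}. The three bounds in the statement are consequences of one careful recursive estimate on the coefficients $\fRHS{}_k$, optimized in $K$.

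First, I would set up the algebraic recursion for $\fRHS{}_k$. Expanding both sides of $\Phi^L_{h,\fRHS{}_h}(\vec{x}) = \phi_{h,\fRHS{}}(\vec{x})$ in powers of $h$ and matching coefficients yields, for each $k\ge p$, an identity of the form
\begin{equation*}
\fRHS{}_{k}(\vec{y}) = R_k\bigl(\fRHS{},\fRHS{}_0,\dots,\fRHS{}_{k-1}\bigr)(\vec{y}),
\end{equation*}
where $R_k$ is a polynomial in the vector fields and their derivatives coming from the B-series of the unrolled scheme minus that of the exact flow. Consistency guarantees $\fRHS{}_0=\fRHS{}$ and $\fRHS{}_k=0$ for $1\le k\le p-1$. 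The key analytic input is then Cauchy's estimate: if $\gRHS{}$ is analytic with $\|\gRHS{}\|_{\mathcal{B}(\mathcal{K},\rho)}\le \mu$, then $\|\gRHS{}^{(j)}\|_{\mathcal{B}(\mathcal{K},\rho/2)} \le \mu\, j!\,(2/\rho)^j$. This allows us to control every term in $R_k$ by norms of $\fRHS{}_j$ on a slightly smaller ball.

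Second, I would run the induction on shrinking balls. Fix $\eta,\zeta,q$ as in the statement and define a geometric sequence of radii $r_k = r\bigl(1-k/(K+1)\bigr)$ (or similar), so that each step of the recursion loses only a controlled fraction of the analyticity radius. By induction on $k$, together with \cref{asm:int} (used to convert bounds on $(\Phi^L_{h,\fRHS{}_h^k})^S - \phi_{Sh,\fRHS{}}$ into bounds on $\fRHS{}_{k}-\fRHS{}_{k-1}$ via the third bullet), one obtains an estimate of the form
\begin{equation*}
\|h^k \fRHS{}_k\|_{\mathcal{B}(\mathcal{K},r_k)} \;\le\; b_2\,m\,\Bigl(\tfrac{\zeta m |h|}{b_1 r}\Bigr)^{k}(k+1)^{qk},
\end{equation*}
where the exponent $q$ and the constant $\zeta$ are exactly those needed so that the bullet-three contraction in \cref{asm:int} closes at each induction step. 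The role of $q=-\ln(2b_2)/\ln(0.912)$ is to make $b_2 \cdot 0.912^{-q} = 1/2$, guaranteeing geometric contraction of the accumulated error.

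Third, I would optimize the truncation. The choice of $K$ as the largest integer with $\zeta(K-p+2)^q|h|m/(\eta r) \le e^{-q}$ is precisely the one that minimizes the resulting tail sum $\sum_{k>K} \|h^k \fRHS{}_k\|$; substituting back gives the subexponentially small factor $e^{-\gamma/|Sh|^{1/q}}$ in the first bound. The second bound (norm of the partial sum from $p$ to $K$) follows by summing the geometric-times-polynomial estimates above from $k=p$ to $K$ and absorbing the sum into the factor $1+1.38^q d_p$. The third bound (global bound on $\fRHS{}_h^K$) is a direct geometric-series estimate, using $K$ small enough that $\zeta(K-p+2)^q |h| m/(\eta r) \le e^{-q} < 1$, so the tail past $\fRHS{}_0$ is bounded by $(\eta-1)m$.

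The principal obstacle is step two: making the induction close with the constants exactly as stated. One must choose $\eta,\zeta$ large enough that the geometric loss in radius $r_k\to r_{k-1}$ together with the factor $b_2$ from \cref{asm:int} leaves a residual factor strictly less than one. This is a bookkeeping exercise rather than a conceptual one, but it is exactly where the specific numerical values in the statement ($6$, $29$, $10(\eta-1)$, $0.912$) are pinned down; getting them consistent with the Cauchy-estimate loss on each iteration is the delicate part. Once the induction is closed, the three estimates of the lemma follow by straightforward summation and application of the second bullet of \cref{asm:int} to pass from the tail $\sum_{k>K} h^k \fRHS{}_k$ to the scheme-level error $(\Phi^L_{h,\fRHS{}_h^K})^S-\phi_{Sh,\fRHS{}}$.
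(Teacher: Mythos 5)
Your proposal reconstructs the Benettin--Giorgilli/Hairer--Lubich--Wanner backward-error-analysis template, and this is indeed essentially what the paper does: the paper's own proof is only a citation, invoking Lemma~B.1 of \cite{zhu2022on} to reduce the $S$-fold composition $(\Phi^L_h)^S$ to a single one-step integrator (since the two have the same IMDE), and then appealing verbatim to Lemma~B.4 of \cite{zhu2022on} for the recursion on coefficients, the shrinking-domain Cauchy estimates, and the optimal truncation index $K$. Your outline --- recursion for $\fRHS{}_k$ from the $h$-power matching, nested analyticity radii, geometric-times-polynomial bound on $\|h^k\fRHS{}_k\|$, choice of $K$ to optimize the tail and absorb the sum into the $(1+1.38^q d_p)$ factor --- captures that machinery faithfully in broad strokes, merely proving it rather than citing it.

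That said, one attribution in your sketch is wrong in a way that would derail a literal execution of step two. You propose using the third (inverse-stability) bullet of \cref{asm:int} to convert bounds on $(\Phi^L_{h,\fRHS{}_h^k})^S - \phi_{Sh,\fRHS{}}$ into bounds on the new coefficient $\fRHS{}_k$. That bullet compares two \emph{unrolled numerical flows} $(\Phi^L_{h,\hat g})^S$ and $(\Phi^L_{h,g})^S$, not a numerical flow to an exact flow, so it does not apply as you write it; and applying it with $\hat g = \fRHS{}_h^k$, $g=\fRHS{}_h^{k-1}$ would require a prior bound on the very truncation error you are trying to establish, which is circular. In the standard BEA template (and in what the cited Lemma~B.4 must be doing), the coefficient bounds come from direct Taylor-coefficient recursion plus Cauchy estimates on shrinking balls, with only the first two bullets of \cref{asm:int} (analyticity on $\mathcal{B}(\mathcal K,r)$ for $|h|\le b_1 r/m$, and forward Lipschitz stability with constant $b_2$) supplying the scheme-dependent constants; the inverse-stability bullet enters later, in the proof of \cref{the:implicit imde}, to pass from the learning loss to the vector-field error $\|\fRHS{}_\theta - \fRHS{}_h^K\|$. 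A smaller point: $q=-\ln(2b_2)/\ln 0.912$ gives $b_2\cdot 0.912^{q}=1/2$, not $b_2\cdot 0.912^{-q}=1/2$; the sign matters when you unroll the contraction.
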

\begin{proof}
According the Lemma B.1 in \cite{zhu2022on}, the IMDE of the $(\Phi^L_{h})^S$ coincides with the IMDE of $\Phi^L_{h}$. In addition, via regarding compositions $(\Phi^L_{h})^S$ as a one-step integrator, the estimates are obtained as in the proof of Lemma B.4 in \cite{zhu2022on}. Special constants including $0.912$ and $1.38$
are also explained in \cite{zhu2022on}.
\end{proof}
\begin{proof}[Proof of \cref{the:implicit imde}]
According to \cref{lem:difference}, we have that
\begin{equation}\label{Phi fnet-fh}
\delta : = \frac{1}{\Delta t}\norm{\brac{\Phi_{h, \fRHS{}_{\theta}}}^S-\brac{\Phi_{h,\fRHS{}_h^K}}^S}_{\mathcal{B}(\vec{x}, r_1)} \leq \mathcal{L} + c m e^{-\gamma /\Delta t^{1/q}},\quad \norm{\fRHS{}_{h}^K}_{\mathcal{B}(\vec{x}, r_1)}<(\eta-1)m,
\end{equation}
where $c = b_2\eta e^q$. Let
\begin{equation*}
h_1 = (eb_2+1)h, \ M = (\eta-1)m.
\end{equation*}
By the third term of \cref{asm:int}, we deduce that for $0\leq j \leq r_1/b_3Sh_1 M$,
\begin{equation*}
\begin{aligned}
\norm{\fRHS{}_{\theta} - \fRHS{}_h^K}_{\mathcal{B}(\vec{x}, jb_3Sh_1M)} \leq \delta + e^{-1} \norm{\fRHS{}_{\theta} - \fRHS{}_h^K}_{\mathcal{B}(\vec{x}, (j+1)b_3Sh_1 M)}.
\end{aligned}
\end{equation*}
As in the proof of Theorem 3.1 in \cite{zhu2022on}. we obtain that
\begin{equation*}\label{fnet-fh}
\begin{aligned}
\norm{\fRHS{}_{\theta}(\vec{x}) - \fRHS{}_h^K(\vec{x})} \leq e^{-\hat{\gamma}/\Delta t} \norm{\fRHS{}_{\theta} - \fRHS{}_h^K}_{\mathcal{B}(\vec{x}, r_1)} + \frac{\delta}{1-\lambda},
\end{aligned}
\end{equation*}
where $\hat{\gamma} = \frac{r_1}{(eb_2+1)b_3M}$. And thus we conclude that
\begin{equation*}
\norm{\fRHS{}_{\theta}(\vec{x}) - \fRHS{}_h^K(\vec{x})} \leq c_1 m e^{-\gamma /\Delta t^{1/q}} + C \mathcal{L},
\end{equation*}
where $C =e/(e-1)$ and $c_1$ is a constant satisfying $c_1\geq C \cdot c+ \eta e^{\gamma/\Delta t^{1/q} - \hat{\gamma}/\Delta t}$.
\end{proof}

Next, to complete the proof of \cref{the:implicit imde}, it suffices to show that unrolled implicit Runge-Kutta scheme $\Phi_h$ \cref{eq:rk} using fixed-point iteration \cref{eq:irk} or Newton-Raphson iteration \cref{eq:irk_nr} both satisfy \cref{asm:int}.
\begin{namedLemma}[Fixed-point iteration obeys \cref{asm:int}]\label{lem:rkfp}
Consider a consistent implicit Runge-Kutta scheme $\Phi_h$ \cref{eq:rk} and its approximation via fixed-point iteration $\Phi_h^{L}$ \cref{eq:irk}, denote
\begin{equation*}
\mu = \sum_{i=1}^I|b_i|,  \quad \kappa =\max_{1\leq i\leq I}\sum_{j=1}^I|a_{ij}|.
\end{equation*}
Let $\gRHS{}, \hat{\gRHS}$ be analytic in $\mathcal{B}(\mathcal{K},r)$
and satisfy $\norm{\gRHS{}}_{\mathcal{B}(\mathcal{K},r)}\leq m$, $\norm{\hat{\gRHS}}_{\mathcal{B}(\mathcal{K},r)} \leq m$.
Then, for $|h|\leq h_0 = r/(2(S\mu +\kappa)m)$ and $\vec{x} \in \mathcal{K}$, the compositions $(\Phi_{h,\gRHS{}}^{L})^S(\vec{x})$, $(\Phi_{h,\hat{\gRHS}}^{L})^S(\vec{x})$ are analytic and
\begin{equation*}
\norm{(\Phi_{h,\hat{\gRHS}}^{L})^S-(\Phi_{h,\gRHS{}}^{L})^S}_{\mathcal{K}}\leq (e-1)(S\mu+\kappa)|h| \norm{\hat{\gRHS}-\gRHS{}}_{\mathcal{B}(\mathcal{K},r)}.
\end{equation*}
In addition, for $|h|< h_1 \leq h_0$,
\begin{equation*}
\norm{\hat{\gRHS}-\gRHS{}}_{\mathcal{K}} \leq \frac{\norm{(\Phi_{h,\hat{\gRHS}}^{L})^S-(\Phi_{h,\gRHS{}}^{L})^S}_{\mathcal{K}}}{S|h|} +\frac{(e-1)(\mu + \kappa/S) |h|\norm{\hat{\gRHS}-\gRHS{}}_{\mathcal{B}(\mathcal{K},(S\mu + \kappa)h_1 m)}}{h_1-|h|}.
\end{equation*}
\end{namedLemma}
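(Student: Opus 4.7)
The plan is to establish the three assertions in sequence---analyticity, the forward Lipschitz-style bound, and the reverse bound via Cauchy's integral formula---mirroring the structure of the analogous explicit-scheme lemma in \cite{zhu2022on}. For analyticity I would induct on $l$ to show that each inner iterate $\vec{v}_i^l$ of \cref{eq:irk} lies in $\mathcal{B}(\mathcal{K}, |h|\kappa m)$, starting from $\vec{v}_i^0 = \vec{x}\in\mathcal{K}$ and using $\vec{v}_i^l = \vec{x} + h\sum_j a_{ij}\gRHS{}(\vec{v}_j^{l-1})$ with the uniform bound $\norm{\gRHS{}}_{\mathcal{B}(\mathcal{K},r)}\leq m$. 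One outer step then maps $\mathcal{K}$ into $\mathcal{B}(\mathcal{K}, |h|\mu m)$, so after $S$ compositions the orbit lies in $\mathcal{B}(\mathcal{K}, S|h|\mu m)$ with all intermediate stages in $\mathcal{B}(\mathcal{K}, (S\mu+\kappa)|h|m)$. The choice $|h|\leq h_0 = r/(2(S\mu+\kappa)m)$ keeps everything strictly inside $\mathcal{B}(\mathcal{K}, r/2)$, so every $\gRHS{}$-evaluation in $(\Phi_{h,\gRHS{}}^L)^S$ is analytic in $(\vec{x},h)$ and the composition itself is analytic.

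For the forward bound (second assertion) I set $\Delta := \norm{\hat{\gRHS}-\gRHS{}}_{\mathcal{B}(\mathcal{K},r)}$ and split
\begin{equation*}
\hat{\gRHS}(\hat{\vec{v}}_j^{l-1}) - \gRHS{}(\vec{v}_j^{l-1}) = \bigl(\hat{\gRHS}(\hat{\vec{v}}_j^{l-1}) - \hat{\gRHS}(\vec{v}_j^{l-1})\bigr) + \bigl(\hat{\gRHS}(\vec{v}_j^{l-1}) - \gRHS{}(\vec{v}_j^{l-1})\bigr).
\end{equation*}
A Cauchy derivative estimate on $\mathcal{B}(\mathcal{K},r/2)$ controls the Lipschitz constant of $\hat{\gRHS}$ by roughly $2m/r$; induction on $l$, summing a geometric series of ratio $|h|\kappa\cdot 2m/r \leq 1/2$, bounds $\norm{\hat{\vec{v}}_i^L - \vec{v}_i^L}_{\mathcal{K}}$ by a constant times $|h|\kappa\Delta$. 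Plugging into the output formula then bounds the single-composition difference by $|h|(\mu+\kappa)\Delta$ up to a factor close to $1$, and iterating by triangle inequality over the $S$ outer steps---each with Lipschitz constant $1+\mathcal{O}(h)$---produces the overall factor $(e-1)(S\mu+\kappa)|h|$, with the $(e-1)$ absorbing the exponentially small compounding across compositions.

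The most delicate piece is the reverse bound. For fixed $\vec{x}\in\mathcal{K}$, the function
\begin{equation*}
H(z) := \bigl(\Phi_{z,\hat{\gRHS}}^L\bigr)^S(\vec{x}) - \bigl(\Phi_{z,\gRHS{}}^L\bigr)^S(\vec{x})
\end{equation*}
extends analytically on the complex disk $|z|\leq h_1$, by the analyticity argument above applied on the enlarged domain $\mathcal{B}(\mathcal{K}, (S\mu+\kappa)h_1 m)\subset\mathcal{B}(\mathcal{K},r)$ (which holds because $h_1\leq h_0$). Consistency of $\Phi_h$ forces the Taylor expansion $H(z) = S z\,(\hat{\gRHS}(\vec{x})-\gRHS{}(\vec{x})) + \mathcal{O}(z^2)$, so $H(0) = 0$ and $H'(0) = S(\hat{\gRHS}(\vec{x})-\gRHS{}(\vec{x}))$. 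Applying Cauchy's integral formula to the analytic function $H(z)/z$,
\begin{equation*}
\frac{H(h)}{h} - H'(0) = \frac{h}{2\pi i}\oint_{|z|=h_1}\frac{H(z)}{z^2(z-h)}\,dz,
\end{equation*}
taking the $\mathcal{K}$-sup, and bounding $\norm{H(z)}_{\mathcal{K}}$ for $|z|=h_1$ via the forward estimate evaluated on $\mathcal{B}(\mathcal{K},(S\mu+\kappa)h_1 m)$ yields exactly the claimed inequality after dividing by $S$. The main obstacle will be the book-keeping of nested domain enlargements: ensuring the inner-iteration swelling $|h|\kappa m$ and outer-composition swelling $S|h|\mu m$ combine so that the stated radius $(S\mu+\kappa)h_1 m$ and prefactor $(e-1)(\mu+\kappa/S)$ emerge precisely, and verifying that the $(e-1)$ constant from the forward bound propagates cleanly through the contour integral without any spurious loss.
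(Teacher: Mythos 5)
Your plan matches the paper's proof essentially step for step: the same induction over inner iterations and outer compositions to keep iterates in a controlled dilation of $\mathcal{K}$, the same Cauchy derivative estimate giving the Lipschitz constant $2m/r$, the same geometric-series summation to get the single-step difference, and the same unrolling over $S$ steps with the $e^{aS}-1 \le (e-1)aS$ bound producing the $(e-1)(S\mu+\kappa)$ prefactor. The only place you deviate is the reverse bound: the paper bounds each Taylor coefficient $\tfrac{d^i}{dh^i}H|_{h=0}$ individually via Cauchy's estimate and then lower-bounds $\|H(h)\|$ by peeling off the linear term and summing the tail of the geometric series, while you apply Cauchy's integral formula directly to $H(z)/z$. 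These are two phrasings of the same argument — they use the same analyticity of $h\mapsto H(h)$ on $|h|\le h_1$, the same forward bound on the circle with the tighter domain $\mathcal{B}(\mathcal{K},(S\mu+\kappa)h_1 m)$, and the same computation $H'(0)=S(\hat{\gRHS}(\vec{x})-\gRHS{}(\vec{x}))$ from consistency — and they produce identical constants. Your version is arguably cleaner since it avoids summing the tail explicitly, but it buys nothing in generality or strength; the substance is the same.
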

\begin{proof}
For $\vec{y} \in \mathcal{B}(\mathcal{K}, r/2)$ and $\norm{\Delta \vec{y}} \leq 1$, the function $\alpha(z) = \gRHS{}(\vec{y}+ z \Delta \vec{y})$ is analytic  for $|z| \leq r/2$ and bounded by $m$. By Cauchy's estimate, we obtain
\begin{equation*}
\norm{\gRHS{}'(\vec{y})\Delta \vec{y}} = \norm{\alpha ' (0)} \leq 2m/r,
\end{equation*}
and $\norm{\gRHS{}'(\vec{y})}\leq 2m/r$ for $y \in \mathcal{B}(\mathcal{K}, r/2)$ in the operator norm. Similarly, $\norm{\hat{\gRHS}'(y)}\leq 2m/r$ for $y \in \mathcal{B}(\mathcal{K}, r/2)$.

As in \cref{eq:GenNonlinUnroll}, when
using fixed-point iteration to unroll Runge-Kutta method \cref{eq:irk}, the solutions are recursively obtained by
\begin{equation*}
\left\{\begin{aligned}
&\vec{v}_i^{0,s} = \vec{x}^{L,s}, \quad \vec{x}^{L,0} = \vec{x}\\
&\vec{v}_i^{l,s} = \vec{x}^{L,s} + h\sum_{j=1}^I a_{ij} \gRHS{}(\vec{v}_j^{l-1,s}),\\
&\vec{x}^{L,s+1} = \vec{x}^{L,s} + h\sum_{i=1}^I b_{i} \gRHS{}(\vec{v}_i^{L,s}),\\
\end{aligned}\right.
\quad
\left\{\begin{aligned}
&\vec{\hat{v}}_i^{0,s} = \vec{\hat{x}}^{L,s}, \quad \vec{\hat{x}}^{L,0} = \vec{x}\\
&\vec{\hat{v}}_i^{l,s} = \vec{\hat{x}}^{L,s} + h\sum_{j=1}^I a_{ij}  \hat{\gRHS}(\vec{\hat{v}}_j^{l-1,s}),\\
&\vec{\hat{x}}^{L,s+1} = \vec{\hat{x}}^{L,s} + h\sum_{i=1}^I b_{i} \hat{\gRHS} (\vec{\hat{v}}_i^{L,s}),
\end{aligned}\right.
\end{equation*}
where $l=1, \cdots, L$, $s=0, \cdots, S-1$. For $|h|\leq h_0 = r/(2(S\mu +\kappa)m)$ and $\vec{x} \in \mathcal{K}$, we can readily check that
\begin{equation*}
\begin{aligned}
&\vec{x}^{L,s},\vec{\hat{x}}^{L,s} \in \mathcal{B}(\mathcal{K},s\mu m |h|) \ \text{for } s=0,\cdots, S,\\
&\vec{v}^{l,s}_i,\vec{\hat{v}}^{l,s}_i \in \mathcal{B}(\mathcal{K},(s\mu  + \kappa)  m |h|) \ \text{for } s=0,\cdots, S-1,\ l=1,\cdots, L,\ i = 1,\cdots, I.\\
\end{aligned}
\end{equation*}
Denote $V^{l,s} = \max_{1\leq i\leq I}\|\vec{v}_i^{l,s} - \vec{\hat{v}}_i^{l,s}\|$, $X^{s} = \|\vec{x}^{L,s} - \vec{\hat{x}}^{L,s}\|$, we have
\begin{equation*}
\begin{aligned}
\|\vec{v}_i^{l,s} - \vec{\hat{v}}_i^{l,s}\|
\leq & |h|\sum_{j=1}^s|a_{ij}|(\|\gRHS{}(\vec{v}_j^{l-1,s})-\gRHS{}(\vec{\hat{v}}_j^{l-1,s})\| + \|\gRHS{}(\vec{\hat{v}}_j^{l-1,s})-\hat{\gRHS}(\vec{\hat{v}}_j^{l-1,s})\|)+ X^s\\
\leq & |h|\kappa \frac{2m}{r}V^{l-1,s}+|h|\kappa\|\hat{\gRHS}-g\|_{\mathcal{B}(\mathcal{K},(S\mu+\kappa)  m |h|)} + X^s.
\end{aligned}
\end{equation*}
Thus we obtain
\begin{equation*}
V^{l,s} \leq |h|\kappa \frac{2m}{r}V^{l-1,s}+|h|\kappa\norm{\hat{\gRHS}-\gRHS{}}_{\mathcal{B}(\mathcal{K},(S\mu+\kappa)  m |h|)}+ X^s.
\end{equation*}
As a result, we have
\begin{equation}\label{eq:uls}
\begin{aligned}
V^{L,s}\leq& (|h|\kappa \frac{2m}{r})^LV^{0,s} + \frac{1-(|h|\kappa \frac{2m}{r})^L}{1-|h|\kappa \frac{2m}{r}}X^s+ \frac{1-(|h|\kappa \frac{2m}{r})^L}{1-|h|\kappa \frac{2m}{r}}|h|\kappa \norm{\hat{\gRHS}-\gRHS{}}_{\mathcal{B}(\mathcal{K},(S\mu+\kappa)  m |h|)}\\
\leq & \frac{1}{1-|h|\kappa \frac{2m}{r}}X^s+ \frac{1}{1-|h|\kappa \frac{2m}{r}}|h|\kappa \norm{\hat{\gRHS}-\gRHS{}}_{\mathcal{B}(\mathcal{K},(S\mu+\kappa)  m |h|)}.
\end{aligned}
\end{equation}
In addition,
\begin{equation}
\label{eq:est2}
\begin{aligned}
\|\vec{x}^{L,s+1} - \vec{\hat{x}}^{L,s+1}\|
\leq &X^s+ |h|\sum_{i=1}^s|b_i| (\|\gRHS{}(\vec{v}_j^{L,s})-\gRHS{}(\vec{\hat{v}}_j^{L,s})\| + \|\gRHS{}(\vec{\hat{v}}_j^{L,s})-\hat{\gRHS}(\vec{\hat{v}}_j^{L,s})\|)\\
\leq & X^s+ |h|\mu \frac{2m}{r}V^{L,s}+|h|\mu\norm{\hat{\gRHS}-\gRHS{}}_{\mathcal{B}(\mathcal{K},(S\mu+\kappa)  m |h|)}.
\end{aligned}
\end{equation}
These estimates, together with \cref{eq:uls}, indicate that
\begin{equation}\label{eq:vs}
X^{s+1} \leq (1+\frac{|h|\mu\frac{2m}{r}}{1-|h|\kappa \frac{2m}{r}})X^s+ ( \frac{|h|\mu \frac{2m}{r}}{1-|h|\kappa \frac{2m}{r}}\kappa + \mu)|h| \norm{\hat{\gRHS}-\gRHS{}}_{\mathcal{B}(\mathcal{K},(S\mu+\kappa)  m |h|)},
\end{equation}
Therefore, we deduce that
\begin{equation*}
\begin{aligned}
X^S\leq & \frac{(1+\frac{|h|\mu\frac{2m}{r}}{1-|h|\kappa \frac{2m}{r}})^S-1}{\frac{|h|\mu\frac{2m}{r}}{1-|h|\kappa \frac{2m}{r}}} ( \frac{|h|\mu \frac{2m}{r}}{1-|h|\kappa \frac{2m}{r}}\kappa + \mu)|h| \norm{\hat{\gRHS}-\gRHS{}}_{\mathcal{B}(\mathcal{K},(S\mu+\kappa)  m |h|)}\\
\leq & (e-1)(S\mu+\kappa)|h| \norm{\hat{\gRHS}-\gRHS{}}_{\mathcal{B}(\mathcal{K},(S\mu+\kappa)  m |h|)}.
\end{aligned}
\end{equation*}
where we have used  the fact $|h|\mu \frac{2m}{r}/(1-|h|\kappa \frac{2m}{r}) \leq 1/S$. 

Finally, using Cauchy's estimate, we deduce that $h_1 \leq h_0$
\begin{equation*}
\begin{aligned}
\norm{\frac{d^i}{dh^i}\left((\Phi_{h,\hat{\gRHS}}^{L})^S(\vec{x})-(\Phi_{h,\gRHS{}}^{L})^S(\vec{x})\right)\big|_{h=0}}
\leq \frac{ i!\cdot (e-1)(S\mu + \kappa) \norm{\hat{\gRHS}-\gRHS{}}_{\mathcal{B}(\mathcal{K},(S\mu + \kappa) h_1 m)}}{h_1^{i-1}}.
\end{aligned}
\end{equation*}
By the analyticity and triangle inequality, we obtain that for $|h|< h_1$,
\begin{equation*}
\begin{aligned}
&\norm{(\Phi_{h,\hat{\gRHS}}^{L})^S(\vec{x})-(\Phi_{h,\gRHS{}}^{L})^S(\vec{x})}\\
\geq& S|h|\norm{\hat{\gRHS}(\vec{x})-\gRHS{}(\vec{x})} - \sum_{i=2}^{\infty}\norm{\frac{h^i}{i!}\frac{d^j}{dh^j}\brac{(\Phi_{h,\hat{\gRHS}}^{L})^S(\vec{x})-(\Phi_{h,\gRHS{}}^{L})^S(\vec{x})}\big|_{h=0}}\\
\geq& S|h|\norm{\hat{\gRHS}(\vec{x})-\gRHS{}(\vec{x})} - (e-1)(S\mu + \kappa)|h| \norm{\hat{\gRHS}-\gRHS{}}_{\mathcal{B}(\mathcal{K},(S\mu + \kappa) h_1 m)}\sum_{i=2}^{\infty} \left(\frac{|h|}{h_1}\right)^{i-1}. \\
\end{aligned}
\end{equation*}
Therefore, we have
\begin{equation}
\label{eq:RHSerrFP}
\norm{\hat{\gRHS}-\gRHS{}}_{\mathcal{K}} \leq \frac{\norm{(\Phi_{h,\hat{\gRHS}}^{L})^S-(\Phi_{h,\gRHS{}}^{L})^S}_{\mathcal{K}}}{S|h|} +\frac{(e-1)(\mu + \kappa/S) |h|\norm{\hat{\gRHS}-\gRHS{}}_{\mathcal{B}(\mathcal{K},(S\mu + \kappa)h_1 m)}}{h_1-|h|},
\end{equation}
which concludes the proof.
\end{proof}

\begin{namedLemma}[Newton-Raphson iteration obeys \cref{asm:int}]\label{lem:rknr}
Consider a consistent implicit Runge-Kutta scheme $\Phi_h$ \cref{eq:rk}, and its approximation using Newton-Raphson iteration $\Phi_h^{L}$ \cref{eq:irk_nr}, denote
\begin{equation*}
\mu = \sum_{i=1}^I|b_i|,  \quad \kappa =\max_{1\leq i\leq I}\sum_{j=1}^I|a_{ij}|.
\end{equation*}
Let $\gRHS{}, \hat{\gRHS}$ be analytic in $\mathcal{B}(\mathcal{K},r)$ and satisfy $\norm{\gRHS{}}_{\mathcal{B}(\mathcal{K},r)}\leq m$, $\norm{\hat{\gRHS}}_{\mathcal{B}(\mathcal{K},r)} \leq m$. Then, for $|h|\leq h_0 = r/(2(S\mu +3.5\kappa)m)$ 
and $\vec{x} \in \mathcal{K}$, the compositions $(\Phi_{h,\gRHS{}}^{L})^S(\vec{x})$, $(\Phi_{h,\hat{\gRHS}}^{L})^S(\vec{x})$ are analytic and
\begin{equation*}
\norm{(\Phi_{h,\hat{\gRHS}}^{L})^S-(\Phi_{h,\gRHS{}}^{L})^S}_{\mathcal{K}}\leq (e-1) \mu S |h| \norm{\hat{\gRHS}-\gRHS{}}_{\mathcal{B}(\mathcal{K},r)}.
\end{equation*}
In addition, for $|h|< h_1 \leq h_0$,
\begin{equation}
\label{eq:RHSerrNR}
\norm{\hat{\gRHS}-\gRHS{}}_{\mathcal{K}} \leq \frac{\norm{(\Phi_{h,\hat{\gRHS}}^{L})^S-(\Phi_{h,\gRHS{}}^{L})^S}_{\mathcal{K}}}{S|h|} +\frac{(e-1)(\mu + \kappa/S) |h|\norm{\hat{\gRHS}-\gRHS{}}_{\mathcal{B}(\mathcal{K},(S\mu + 3\kappa)h_1 m)}}{h_1-|h|}.
\end{equation}
\end{namedLemma}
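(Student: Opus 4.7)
The plan is to adapt the proof strategy of \cref{lem:rkfp} to the Newton-Raphson iteration, accommodating the linear solve that occurs at each Newton step. As preliminaries, I would invoke Cauchy's integral estimate as in the fixed-point proof to obtain $\norm{\gRHS{}'(\vec{y})},\norm{\hat{\gRHS}'(\vec{y})} \leq 2m/r$ on $\mathcal{B}(\mathcal{K}, r/2)$, together with the companion estimate $\norm{\gRHS{}' - \hat{\gRHS}'}_{\mathcal{B}(\mathcal{K}, r/2)} \leq (2/r)\norm{\gRHS{} - \hat{\gRHS}}_{\mathcal{B}(\mathcal{K}, r)}$ that will be needed for the cross terms specific to Newton's method.

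First I would verify that the linear system defining each Newton step is well-conditioned: with the $\ell_\infty$ block-operator norm on $(\C^D)^I$, the matrix $F'(\vec{V}^{l-1}) = \mat{I} - h\,A\otimes\gRHS{}'(\vec{V}^{l-1})$ satisfies $\norm{hAg'(\vec{V}^{l-1})} \leq |h|\kappa(2m/r)$, which, under the hypothesis $|h| \leq h_0 = r/(2(S\mu+3.5\kappa)m)$, is bounded by $\kappa/(S\mu + 3.5\kappa) \leq 1/3.5$. A Neumann series argument then gives analyticity of $(I+E)^{-1}$ with $\norm{F'(\vec{V}^{l-1})^{-1}} \leq 3.5/2.5 = 1.4$; this is the analogue of the denominator $1-|h|\kappa(2m/r)$ in the fixed-point proof, with the extra slack provided by the $3.5\kappa$ in the definition of $h_0$ to absorb the Newton overshoot. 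Next, an induction on $(l,s)$ would confirm that all iterates $\vec{v}_i^{l,s}$ and $\vec{\hat v}_i^{l,s}$ remain in a slightly enlarged ball $\mathcal{B}(\mathcal{K}, (s\mu+3\kappa)m|h|)$ so that the Cauchy estimates remain valid throughout the composition.

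Next I would track the iterate differences $V^{l,s} := \max_i \norm{\vec{v}_i^{l,s} - \vec{\hat v}_i^{l,s}}$ and $X^s := \norm{\vec{x}^{L,s} - \vec{\hat x}^{L,s}}$. Subtracting the two defining equations and using the telescoping identity
\begin{equation*}
\gRHS{}'(\vec{v}^{l-1})(\vec{v}^l - \vec{v}^{l-1}) - \hat{\gRHS}'(\vec{\hat v}^{l-1})(\vec{\hat v}^l - \vec{\hat v}^{l-1}) = \hat{\gRHS}'(\vec{\hat v}^{l-1})(\vec{v}^l - \vec{\hat v}^l) + \text{(higher-order)},
\end{equation*}
I rearrange to obtain a linear system $F'(\vec{V}^{l-1})(\vec{v}^l_i - \vec{\hat v}^l_i) = R_i$, where the right-hand side collects: (i) the ``driving'' term $X^s$, (ii) a Lipschitz contribution $|h|\kappa(2m/r)V^{l-1,s}$, (iii) a source term $|h|\kappa\norm{\hat{\gRHS} - \gRHS{}}$, and (iv) genuinely higher-order terms involving $\norm{\gRHS{}'-\hat{\gRHS}'}$ and $\norm{\vec{v}^l - \vec{v}^{l-1}}$ (which decays by the Newton quadratic-convergence estimate $(I - hAg'(\vec{v}^{l-1}))(\vec{v}^l - \vec{v}^{l-1}) = hA\bigl[g(\vec{v}^{l-1}) - g(\vec{v}^{l-2}) - g'(\vec{v}^{l-2})(\vec{v}^{l-1}-\vec{v}^{l-2})\bigr]$). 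Multiplying through by the inverse of $F'(\vec{V}^{l-1})$ (bounded by $1.4$) and using the Cauchy bound for $\gRHS{}'-\hat{\gRHS}'$ to control (iv), I obtain $V^{L,s} \leq C_1 X^s + C_2|h|\norm{\hat{\gRHS} - \gRHS{}}_{\mathcal{B}(\mathcal{K}, r)}$, and after the outer composition step a recursion $X^{s+1} \leq (1+|h|\mu(2m/r)\cdot(\text{inv factor}))X^s + \mu|h|\norm{\hat{\gRHS}-\gRHS{}}_{\mathcal{B}(\mathcal{K},r)}$ where the stepsize condition ensures the multiplicative factor is at most $1/S$. Summing the geometric series over $s=0,\dots,S-1$ gives the clean bound $X^S \leq (e-1)\mu S|h|\norm{\hat{\gRHS} - \gRHS{}}_{\mathcal{B}(\mathcal{K}, r)}$.

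Finally, the reverse inequality \cref{eq:RHSerrNR} follows by the same Cauchy-in-$h$ argument used at the end of \cref{lem:rkfp}: since $h \mapsto (\Phi^L_{h,\hat{\gRHS}})^S(\vec{x}) - (\Phi^L_{h,\gRHS{}})^S(\vec{x})$ is analytic on $|h| \leq h_1$ and consistency forces its linear term to be $Sh(\hat{\gRHS}(\vec{x}) - \gRHS{}(\vec{x}))$, bounding the higher-order coefficients by the direct estimate (applied on the inflated ball $\mathcal{B}(\mathcal{K}, (S\mu+3\kappa)h_1 m)$) and summing the remaining geometric series in $|h|/h_1$ produces \cref{eq:RHSerrNR}. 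I expect the main technical obstacle to be the bookkeeping in the third step: the Newton iteration intertwines all $I$ stages through the linear solve, so the recursion for $V^{l,s}$ must be managed as a block-inequality rather than coordinatewise, and the higher-order cross terms involving $\norm{\gRHS{}' - \hat{\gRHS}'}$ and $\norm{\vec{v}^l - \vec{v}^{l-1}}$ must be carefully dominated using Cauchy's estimate so as not to spoil the clean $(e-1)\mu S|h|$ coefficient appearing in the conclusion.
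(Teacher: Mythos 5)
Your proposal is correct and follows essentially the same strategy as the paper's proof: Cauchy estimates on the derivatives, tracking the inner-iterate differences $V^{l,s}$ and the composition differences $X^s$, and closing with the analytic-in-$h$ argument for the reverse inequality \cref{eq:RHSerrNR}. The presentational difference is that you frame the Newton step as inverting the block matrix $F'(\vec{V}^{l-1})$ and bound $\norm{F'^{-1}}$ via a Neumann series (getting a crude constant $\approx 1.4$), whereas the paper carries the denominator factor $1/(1-|h|\kappa\tfrac{2m}{r})$ through the $V^{l,s}$ recursion and balances it against the $3.5\kappa$ hidden in $h_0$ so that the per-step growth of $X^s$ is exactly $1/S$, which is what makes the geometric sum collapse to $(1+1/S)^S-1\leq e-1$; your loose $1.4$ bound would yield a somewhat larger numerical constant in place of $(e-1)$. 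One point to make explicit in your step (iv): beyond the companion estimate $\norm{\gRHS'-\hat{\gRHS}'}\lesssim\norm{\gRHS-\hat{\gRHS}}/r$, you also need the second-derivative Cauchy bound $\norm{\gRHS''}\leq 4m/r^2$ to Lipschitz-control the variation of $\gRHS'$ \emph{between the two iterate sequences} $\vec{v}^{l-1}$ and $\vec{\hat v}^{l-1}$ — this is distinct from the $\gRHS'-\hat{\gRHS}'$ bound, is what drives the quadratic convergence of the inner Newton loop, and is the reason the enlarged radius $3.5\kappa$ appears in $h_0$ in the first place; the paper states this estimate explicitly at the outset.
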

We note that \cref{eq:RHSerrFP} and \cref{eq:RHSerrNR}
differ only in the constants $3.5$ and $3$ used
(which are both $1$ in the FP case).
\begin{proof}
For $\vec{y} \in \mathcal{B}(\mathcal{K}, r/2)$ and $\norm{\Delta \vec{y}} \leq 1$, the function $\alpha(z) = \gRHS{}(\vec{y}+ z \Delta \vec{y})$ is analytic  for $|z| \leq r/2$ and bounded by $m$. By Cauchy's estimate, we obtain
\begin{equation*}
\norm{\gRHS{}'(\vec{y})\Delta \vec{y}} = \norm{\alpha ' (0)} \leq 2m/r,\quad \norm{\gRHS{}''(\vec{y})(\Delta \vec{y}, \Delta \vec{y})} = \norm{\alpha''(0)} \leq 4m/r^2
\end{equation*}
and thus $\norm{\gRHS{}'(\vec{y})}\leq 2m/r$, $\norm{\gRHS{}''(\vec{y})}\leq 4m/r^2$ for $\vec{y} \in \mathcal{B}(\mathcal{K}, r/2)$ in the operator norm. Similar estimates hold for $\hat{\gRHS}$.

When using Newton-Raphson iteration to unroll Runge-Kutta method \cref{eq:irk}, the solutions are recursively obtained by
\begin{equation*}
\left\{\begin{aligned}
&\vec{v}_i^{0,s} = \vec{x}^{L,s}, \quad \vec{x}^{L,0} = \vec{x}\\
&\vec{v}_i^{l,s} = \vec{x}^{L,s} + h\sum_{j=1}^I a_{ij} \big(\gRHS{}(\vec{v}_{j}^{l-1,s})+ g'(\vec{v}_{j}^{l-1,s})(\vec{v}_{j}^{l,s} - \vec{v}_{j}^{l-1,s})\big),\\
&\vec{x}^{L,s+1} = \vec{x}^{L,s} + h\sum_{i=1}^I b_{i} \gRHS{}(\vec{v}_i^{L,s}),\\
\end{aligned}\right.
\end{equation*}
\begin{equation*}
\left\{\begin{aligned}
&\vec{\hat{v}}_i^{0,s} = \vec{\hat{x}}^{L,s}, \quad \vec{\hat{x}}^{L,0} = \vec{x}\\
&\vec{\hat{v}}_i^{l,s} = \vec{\hat{x}}^{L,s} + h\sum_{j=1}^I a_{ij}  \big(\hat{\gRHS}(\vec{\hat{v}}_{j}^{l-1,s})+ \hat{\gRHS}'(\vec{\hat{v}}_{j}^{l-1,s})(\vec{\hat{v}}_{j}^{l,s} - \vec{\hat{v}}_{j}^{l-1,s})\big),\\
&\vec{\hat{x}}^{L,s+1} = \vec{\hat{x}}^{L,s} + h\sum_{i=1}^I b_{i} \hat{\gRHS} (\vec{\hat{v}}_i^{L,s}),\\
\end{aligned}\right.
\end{equation*}
where $l=1, \cdots, L$, $s=0, \cdots, S-1$. Denote $U^{l,s} = \max_{1\leq i\leq I}\norm{\vec{v}_i^{l.s} - \vec{v}_i^{l-1,s}}$, we have
\begin{equation*}
\begin{aligned}
&\norm{\vec{v}_i^{l,s} - \vec{v}_i^{l-1,s}}\\
\leq &  h\sum_{j=1}^I |a_{ij}| \norm{\gRHS{}'(\vec{v}_{j}^{l-1,s})(\vec{v}_i^{l,s} - \vec{v}_{j}^{l-1,s}) + \gRHS{}(\vec{v}_{j}^{l-1,s}) - \gRHS{}(\vec{v}_{j}^{l-2,s})- g'(\vec{v}_{j}^{l-2,s})(\vec{v}_i^{l-1,s} - \vec{v}_{j}^{l-2,s})}\\
\leq & m_1\kappa h U^{l,s} + m_2\kappa h (U^{l-1,s})^2\\
\end{aligned}
\end{equation*}
where $m_1 = \max_{s,l,i} \norm{\gRHS{}'(\vec{v}_{j}^{l-1,s})}$, $m_2 = \max_{s,l,i} \sup_{\theta\in [0,1]}\norm{\gRHS{}''(\theta \vec{v}_{j}^{l-1,s} + (1-\theta)\vec{v}_j^{l-2,s})}$.
Let $m_0 = \max_{s,i}{|\gRHS{}(\vec{x}^{L,s})|}$, we have
\begin{equation*}
U^{l,s} \leq \frac{m_2\kappa h }{1-m_1\kappa h}(U^{l-1,s})^2 \leq (\frac{m_2\kappa h }{1-m_1\kappa h})^{2^{l-1}-1}(U^{1,s})^{2^{l-1}} \leq (\frac{m_2\kappa h }{1-m_1\kappa h})^{2^{l-1}-1}(\frac{m_0\kappa h }{1-m_1\kappa h})^{2^{l-1}}.
\end{equation*}
Therefore, we can inductively check that for $s=0,\cdots, S-1$, $l=1,\cdots, L$, $i = 1,\cdots, I$,
\begin{equation*}
\begin{aligned}
&\vec{x}^{L,s},\vec{\hat{x}}^{L,s} \in \mathcal{B}(\mathcal{K},(s-1)|h|\mu m),\
&\vec{v}^{l,s}_i,\vec{\hat{v}}^{l,s}_i \in \mathcal{B}(\mathcal{K},(s-1)|h|\mu m + |h|\kappa m/(1-m_1\kappa h)),\\
& m_0\leq m,\quad m_1\leq 2m/r.
\end{aligned}
\end{equation*}
Denote $V^{l,s} = \max_{1\leq i\leq I}\|\vec{v}_i^{l,s} - \vec{\hat{v}}_i^{l,s}\|$, $X^{s} = \|\vec{x}^{L,s} - \vec{\hat{x}}^{s}\|$, we have
\begin{equation*}
\begin{aligned}
&\norm{\gRHS{}'(\vec{v}_{j}^{l-1,s})(\vec{v}_{j}^{l,s} - \vec{v}_{j}^{l-1,s}) - \hat{\gRHS}'(\vec{\hat{v}}_{j}^{l-1,s})(\vec{\hat{v}}_{j}^{l,s} - \vec{\hat{v}}_{j}^{l-1,s})}\\
\leq & \norm{\gRHS{}'(\vec{v}_{j}^{l-1,s})(\vec{v}_{j}^{l,s} - \vec{v}_{j}^{l-1,s}) - \hat{\gRHS}'(\vec{v}_{j}^{l-1,s})(\vec{v}_{j}^{l,s} - \vec{v}_{j}^{l-1,s})}\\
&+ \norm{\hat{\gRHS}'(\vec{v}_{j}^{l-1,s})(\vec{v}_{j}^{l,s} - \vec{v}_{j}^{l-1,s})- \hat{\gRHS}'(\vec{\hat{v}}_{j}^{l-1,s})(\vec{v}_{j}^{l,s} - \vec{v}_{j}^{l-1,s})}\\
&+ \norm{\hat{\gRHS}'(\vec{\hat{v}}_{j}^{l-1,s})(\vec{v}_{j}^{l,s} - \vec{v}_{j}^{l-1,s})- \hat{\gRHS}'(\vec{\hat{v}}_{j}^{l-1,s})(\vec{\hat{v}}_{j}^{l,s} - \vec{\hat{v}}_{j}^{l-1,s})}\\
\leq & \norm{\gRHS{}-\hat{\gRHS}}_{\mathcal{B}(\mathcal{K},(s-1)|h|\mu m + 2|h|\kappa m/(1-m_1\kappa |h|))} + \frac{4|h|\kappa m^2}{r^2(1-m_1\kappa |h|)}V^{l-1,s} +\frac{2m}{r}(V^{l-1,s}+V^{l,s})\\
\leq &\norm{\gRHS{}-\hat{\gRHS}}_{\mathcal{B}(\mathcal{K},(s-1)|h|\mu m + 3|h|\kappa m)} + \frac{m}{r}V^{l-1,s} +\frac{2m}{r}(V^{l-1,s}+V^{l,s}),
\end{aligned}
\end{equation*}
where the last inequality holds by the fact that $3\kappa m |h| \leq r/2$.

Subsequently, we deduce that
\begin{small}
\begin{equation*}
\begin{aligned}
&\|\vec{v}_i^{l,s} - \vec{\hat{v}}_i^{l,s}\|\\
\leq & X^s + |h|\sum_{j=1}^s|a_{ij}|(\|\gRHS{}(\vec{v}_j^{l-1,s})-\hat{\gRHS}(\vec{\hat{v}}_j^{l-1,s})\|+ \norm{\gRHS{}'(\vec{v}_{j}^{l-1,s})(\vec{v}_{j}^{l,s} - \vec{v}_{j}^{l-1,s}) - \hat{\gRHS}'(\vec{\hat{v}}_{j}^{l-1,s})(\vec{\hat{v}}_{j}^{l,s} - \vec{\hat{v}}_{j}^{l-1,s})})\\
\leq & |h|\kappa \frac{5m}{r}V^{l-1,s}+ |h|\kappa \frac{2m}{r}V^{l,s} +2|h|\kappa\|\hat{\gRHS}-g\|_{\mathcal{B}(\mathcal{K},(s-1)|h|\mu m + 3|h|\kappa m)} + X^s.
\end{aligned}
\end{equation*}
\end{small}
Thus we obtain
\begin{equation*}
V^{l,s} \leq \frac{|h|\kappa \frac{5m}{r}}{1-|h|\kappa \frac{2m}{r}}V^{l-1,s}+\frac{|h|\kappa\norm{\hat{\gRHS}-\gRHS{}}_{\mathcal{B}(\mathcal{K},(s-1)|h|\mu m + 3|h|\kappa m)}+ X^s}{1-|h|\kappa \frac{2m}{r}}.
\end{equation*}
As a result, we have
\begin{equation}\label{eq:uls_n}
\begin{aligned}
V^{L,s} \leq& \frac{1}{1-|h|\kappa \frac{7m}{r}}X^s+ \frac{1}{1-|h|\kappa \frac{7m}{r}}|h|\kappa \norm{\hat{\gRHS}-\gRHS{}}_{\mathcal{B}(\mathcal{K},(S\mu+3\kappa)  m |h|)}.
\end{aligned}
\end{equation}
Due to the similarity of estimates \cref{eq:uls} and \cref{eq:uls_n}, 
it is now possible to carry over the results of fixed-point iteration to Newton-Raphson iteration.
Here, the analogous estimates are given as
\begin{equation*}
\begin{aligned}
X^S
\leq & (e-1)(S\mu+\kappa)|h| \norm{\hat{\gRHS}-\gRHS{}}_{\mathcal{B}(\mathcal{K},(S\mu+3\kappa)  m |h|)},\\
\norm{\hat{\gRHS}-\gRHS{}}_{\mathcal{K}} \leq& \frac{\norm{(\Phi_{h,\hat{\gRHS}}^{L})^S-(\Phi_{h,\gRHS{}}^{L})^S}_{\mathcal{K}}}{S|h|} +\frac{(e-1)(\mu + \kappa/S) |h|\norm{\hat{\gRHS}-\gRHS{}}_{\mathcal{B}(\mathcal{K},(S\mu + 3\kappa)h_1 m)}}{h_1-|h|}.
\end{aligned}
\end{equation*}
The proof is completed.
\end{proof}

\subsection{Proof of Lemma \ref{lem:loss explain} (\nameref{lem:loss explain})}\label{app:lem:loss explain}
In the following, we seek to prove
a double inequality of the form $c_1 A \leq B \leq c_2 A$,
and, broadly speaking,
do this by showing (1) that $B \leq c_2 A$, and
(2) that $A \leq c_3 B$ with $c_1= 1/c_3$.
\begin{proof}
Denote by $C_L$ the Lipschitz constant of
$\brac{\Phi_{h,\fRHS{}_{\theta}}^{L}}^s$,
we have
\begin{equation*}
\begin{aligned}
&\sum_{x\in\mathcal{T}}\norm{\brac{\Phi_{h,\fRHS{}_{\theta}}^{L}}^s(\vec{x}) -\phi_{sh,\fRHS{}}(\vec{x})}_2^2 = \sum_{n=1}^N\sum_{m=1}^{M}\norm{\brac{\Phi_{h,\fRHS{}_{\theta}}^{L}}^s\circ\phi_{(m-1)\Delta t,\fRHS{}}(\vec{x}_n)-\phi_{m\Delta t, \fRHS{}}(\vec{x}_n)}_2^2\\
\leq & \sum_{n=1}^N\sum_{m=1}^{M}2 \norm{\brac{\Phi_{h,\fRHS{}_{\theta}}^{L}}^s\circ\phi_{(m-1)\Delta t, \fRHS{}}(\vec{x}_n)-\brac{\Phi_{h,\fRHS{}_{\theta}}^{L}}^{ms}}_2^2 + 2 \norm{\brac{\Phi_{h,\fRHS{}_{\theta}}^{L}}^{ms}(\vec{x}_n)-\phi_{m\Delta t, \fRHS{}}(\vec{x}_n)}_2^2\\
\leq &2(C_L^2+1)\cdot M^2\cdot \sum_{n=1}^N\sum_{m=1}^{M}\norm{\brac{\Phi_{h,\fRHS{}_{\theta}}^{L}}^{ms}(\vec{x}_n)-\phi_{m\Delta t, \fRHS{}}(\vec{x}_n)}_2^2/m^2.
\end{aligned}
\end{equation*}
In addition,
\begin{equation*}
\begin{aligned}
&\norm{\brac{\Phi_{h,\fRHS{}_{\theta}}^{L}}^{ms}(\vec{x}_n)-\phi_{m\Delta t, \fRHS{}}(\vec{x}_n)}_2^2/m^2\\
\leq& \sum_{i=0}^{m-1}\norm{ \brac{\Phi_{h,\fRHS{}_{\theta}}^{L}}^{(m-i)s}\circ\phi_{i\Delta t, \fRHS{}}(\vec{x}_n)-\brac{\Phi_{h,\fRHS{}_{\theta}}^{L}}^{(m-i-1)s}\circ \phi_{(i+1)\Delta t, \fRHS{}}(\vec{x}_n)}_2^2\\
\leq & \sum_{i=0}^{m-1} C_L^{2(m-i-1)}\norm{ \brac{\Phi_{h,\fRHS{}_{\theta}}^{L}}^{s}\circ\phi_{i\Delta t, \fRHS{}}(\vec{x}_n)-\phi_{(i+1)\Delta t, \fRHS{}}(\vec{x}_n)}_2^2\\
\leq & \sum_{i=1}^{M} C_L^{2(M-1)}\norm{ \brac{\Phi_{h,\fRHS{}_{\theta}}^{L}}^{s}\circ \phi_{(i-1)\Delta t, \fRHS{}}(\vec{x}_n)-\phi_{i\Delta t, \fRHS{}}(\vec{x}_n)}_2^2.
\end{aligned}
\end{equation*}
Therefore, we conclude that
\begin{equation}
\label{eq:ShootingVsTeacherProof}
\begin{aligned}
&\sum_{n=1}^N\sum_{m=1}^{M}\norm{\brac{\Phi_{h,\fRHS{}_{\theta}}^{L}}^{ms}(\vec{x}_n)-\phi_{m\Delta t, \fRHS{}}(\vec{x}_n)}_2^2/m^2 \\
\leq& \sum_{n=1}^N\sum_{m=1}^{M}\sum_{i=1}^{M} C_L^{2(M-1)}\norm{ \brac{\Phi_{h,\fRHS{}_{\theta}}^{L}}^{s}\circ \phi_{(i-1)\Delta t, \fRHS{}}(\vec{x}_n)-\phi_{i\Delta t, \fRHS{}}(\vec{x}_n)}_2^2\\
\leq & C_L^{2(M-1)} \cdot M \cdot \sum_{x\in\mathcal{T}}\norm{\brac{\Phi_{h,\fRHS{}_{\theta}}^{L}}^s(\vec{x}) -\phi_{sh,\fRHS{}}(\vec{x})}_2^2.
\end{aligned}
\end{equation}
The proof is completed.
\end{proof}

\subsection{Proof of Theorem \ref{the:hat} (\nameref{the:hat})}\label{app:the:hat}

We first demonstrate the convergence of both fixed-point iteration \cref{eq:irk} and Newton-Raphson iteration \cref{eq:irk_nr} for multiple compositions, which will be also used for the proof of \cref{lem:fp}. 
\begin{namedLemma}[\textbf{Multiple compositions of fixed-point iteration converges.}]
\label{lem:fp0}
Consider a consistent implicit Runge-Kutta scheme $\Phi_h$ \cref{eq:rk} and its approximation using fixed-point iteration $\Phi_h^{L}$ \cref{eq:irk}. Denote
\begin{equation*}
\mu =  \sum_{i=1}^I|b_i| \quad \kappa = \max_{1\leq i\leq I}\sum_{j=1}^I|a_{ij}|.
\end{equation*}
Then, for any continuously differentiable $\gRHS{}$ and initial value $\vec{x}$, there exists remainder term $R = \mathcal{O}(h^{L+3})$ such that
\begin{equation*}
\norm{\big(\Phi_{h,\gRHS{}}^{L}\big)^S(\vec{x}) -\big(\Phi_{h,\gRHS{}}\big)^S(\vec{x})}_{\infty} \leq S \norm{\gRHS{}(\vec{x})} \norm{\gRHS{}'(\vec{x})}^{L+1}\mu \kappa^{L+1} h^{L+2} + R.
\end{equation*}
\end{namedLemma}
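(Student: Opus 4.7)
The plan is to peel the estimate into three layers: (i) control the inner fixed-point iteration error $\|v_i^L-v_i\|$ at a single outer step; (ii) use this to bound the one-step outer difference $\|\Phi_{h,\gRHS{}}^L(\vec{x})-\Phi_{h,\gRHS{}}(\vec{x})\|$; and (iii) telescope across the $S$ outer compositions. The leading exponent in $h$ is controlled at each layer by a single factor of $h\kappa\|\gRHS{}'(\vec{x})\|$ gained per inner iteration plus one additional factor of $h\mu\|\gRHS{}'(\vec{x})\|$ from the outer update, and a factor of $S$ from the telescoping sum.

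\paragraph{Inner iteration estimate.}
Let $\vec{v}_i$ denote the exact implicit stages, solving $\vec{v}_i=\vec{x}+h\sum_j a_{ij}\gRHS{}(\vec{v}_j)$. First I would write
\[
\vec{v}_i^l-\vec{v}_i \;=\; h\sum_{j}a_{ij}\bigl(\gRHS{}(\vec{v}_j^{\,l-1})-\gRHS{}(\vec{v}_j)\bigr),
\]
and Taylor-expand $\gRHS{}(\vec{v}_j^{l-1})-\gRHS{}(\vec{v}_j)=\gRHS{}'(\vec{x})(\vec{v}_j^{l-1}-\vec{v}_j)+O(h\|\vec{v}_j^{l-1}-\vec{v}_j\|)+O(\|\vec{v}_j^{l-1}-\vec{v}_j\|^2)$, using that $\vec{v}_j,\vec{v}_j^{l-1}=\vec{x}+O(h)$. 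Combined with the base case $\vec{v}_i^0-\vec{v}_i=-(\vec{v}_i-\vec{x})=-h\sum_j a_{ij}\gRHS{}(\vec{x})+O(h^2)$, a straightforward induction on $l$ yields
\[
\max_i \|\vec{v}_i^L-\vec{v}_i\| \;\le\; \|\gRHS{}(\vec{x})\|\,\|\gRHS{}'(\vec{x})\|^L\,\kappa^{L+1}h^{L+1} + O(h^{L+2}).
\]

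\paragraph{One-step outer estimate and telescoping.}
Since $\Phi_{h,\gRHS{}}^L(\vec{x})-\Phi_{h,\gRHS{}}(\vec{x})=h\sum_i b_i\bigl(\gRHS{}(\vec{v}_i^L)-\gRHS{}(\vec{v}_i)\bigr)$, expanding $\gRHS{}(\vec{v}_i^L)-\gRHS{}(\vec{v}_i)=\gRHS{}'(\vec{x})(\vec{v}_i^L-\vec{v}_i)+O(h\|\vec{v}_i^L-\vec{v}_i\|)+O(\|\vec{v}_i^L-\vec{v}_i\|^2)$ and plugging in the inner bound gives
\[
\bigl\|\Phi_{h,\gRHS{}}^L(\vec{x})-\Phi_{h,\gRHS{}}(\vec{x})\bigr\| \;\le\; \|\gRHS{}(\vec{x})\|\,\|\gRHS{}'(\vec{x})\|^{L+1}\,\mu\,\kappa^{L+1}\,h^{L+2} + O(h^{L+3}).
\]
For $S$ compositions I would use the standard identity
\[
\bigl(\Phi_{h,\gRHS{}}^L\bigr)^S-\bigl(\Phi_{h,\gRHS{}}\bigr)^S \;=\; \sum_{k=0}^{S-1}\bigl(\Phi_{h,\gRHS{}}^L\bigr)^{k}\circ\bigl(\Phi_{h,\gRHS{}}^L-\Phi_{h,\gRHS{}}\bigr)\circ\bigl(\Phi_{h,\gRHS{}}\bigr)^{S-k-1}.
\]
For $\gRHS{}\in C^1$ and $h$ small enough, both $\Phi_{h,\gRHS{}}$ and $\Phi_{h,\gRHS{}}^L$ have Lipschitz constant $1+O(h)$, so the outer composition factor is $O(1)$. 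Applying the one-step estimate at each intermediate point $\vec{y}_k=(\Phi_{h,\gRHS{}})^{S-k-1}(\vec{x})=\vec{x}+O(h)$, and replacing $\|\gRHS{}(\vec{y}_k)\|$ and $\|\gRHS{}'(\vec{y}_k)\|$ by $\|\gRHS{}(\vec{x})\|$ and $\|\gRHS{}'(\vec{x})\|$ at the cost of $O(h)$, produces the claimed sum with the $O(h^{L+3})$ corrections absorbed into $R$, and the factor $S$ from the number of telescoping summands.

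\paragraph{Main obstacle.}
The bookkeeping of the higher-order corrections is the only real difficulty: one must verify that (a) the base case $\vec{v}_i^0-\vec{v}_i$ really carries the factor $\|\gRHS{}(\vec{x})\|$ rather than $\|\gRHS{}\|_\infty$, and (b) freezing $\gRHS{}'$ at $\vec{x}$ throughout the induction, as well as replacing $\gRHS{}(\vec{y}_k),\gRHS{}'(\vec{y}_k)$ by $\gRHS{}(\vec{x}),\gRHS{}'(\vec{x})$ across the telescoping sum, generates only remainders of order $h^{L+3}$ (with constants allowed to depend on $S$ and on $\gRHS{}$ through $\|\gRHS{}\|_{C^2}$ locally around $\vec{x}$). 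Once each such correction is tagged with the correct power of $h$, the estimate collapses to the stated form.
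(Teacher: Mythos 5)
Your proposal is correct and follows essentially the same route as the paper's proof: bound the inner fixed-point error per outer step by induction on $l$ (leading factor $\kappa^{L+1}\|\gRHS{}'\|^{L}\|\gRHS{}\|\,h^{L+1}$), convert it to a one-step outer error of order $\mu\kappa^{L+1}\|\gRHS{}'\|^{L+1}\|\gRHS{}\|\,h^{L+2}$, and then sum over $S$ steps. The only cosmetic difference is in the last stage: you use the algebraic telescoping identity $(\Phi^L)^S-\Phi^S=\sum_k(\Phi^L)^k\circ(\Phi^L-\Phi)\circ\Phi^{S-k-1}$ with a Lipschitz bound on the outer compositions, whereas the paper tracks $X^s=\|\vec{x}^{L,s}-\vec{x}^s\|$ and solves the discrete Gronwall recurrence $X^{s+1}\le(1+Ch)X^s+Dh^{L+2}$; these are two presentations of the same estimate and produce the same leading constant. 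Likewise, where you freeze $\gRHS{}'$ at $\vec{x}$ and bookkeep Taylor corrections, the paper works with the local difference-quotient constants $m_0,m_1$ and only identifies them with $\|\gRHS{}(\vec{x})\|,\|\gRHS{}'(\vec{x})\|$ at the end — mathematically the same tradeoff, which you correctly flagged in your closing remarks.
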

\begin{proof}
The solution of $\big(\Phi_{h,\gRHS{}}\big)^S(\vec{x})$ and $\big(\Phi_{h,\gRHS{}}^{L}\big)^S(\vec{x})$ with initial value $\vec{x}$ are respectively given by
\begin{equation*}
\left\{\begin{aligned}
&\vec{x}^0 = \vec{x},\\
&\vec{v}_{i}^{s} = \vec{x}^{s} + h\sum_{j=1}^I a_{ij} \gRHS{}(\vec{v}_{j}^{s}),\\
&\vec{x}^{s+1} = \vec{x}^{s} + h\sum_{i=1}^I b_{i} \gRHS{}(\vec{v}_i^{s}),\\
\end{aligned}\right.
\quad
\left\{\begin{aligned}
&\vec{x}^{L,0} = \vec{x},\quad \vec{v}_i^{0,s} = \vec{x}^{L,s},\\
&\vec{v}_i^{l,s} = \vec{x}^{L,s} + h\sum_{j=1}^I a_{ij}  \gRHS{}(\vec{v}_{j}^{l-1,s}),\\
&\vec{x}^{L,s+1} = \vec{x}^{L,s} + h\sum_{i=1}^I b_{i} \gRHS{}(\vec{v}_i^{L,s}),\\
\end{aligned}\right.
\end{equation*}
where $s = 0,\cdots, S-1$, $l =1, \cdots, L$, $i=1,\cdots, I$ and $\big(\Phi_{h,\gRHS{}}\big)^S(\vec{x}) = \vec{x}^S$, $\big(\Phi_{h,\gRHS{}}^{L}\big)^S(\vec{x}) = \vec{x}^{L,S}$. Denote $V^{l,s} = \max_{i}\norm{\vec{v}_i^{l.s} - \vec{v}_i^s}$, we have
\begin{equation*}
\begin{aligned}
\norm{\vec{v}_i^{l.s} - \vec{v}_i^s} \leq &   m_1 \kappa h \cdot V^{l-1,s}+ \norm{\vec{x}^{L,s} - \vec{x}^s},
\end{aligned}
\end{equation*}
where $m_1 = \max_{s,l,i} \norm{\gRHS{}(\vec{v}_i^s) - \gRHS{}(\vec{v}_i^{l,s})}/\norm{\vec{v}_i^s - \vec{v}_i^{l,s}}$. As a result,
\begin{equation*}
\begin{aligned}
V^{L,s} \leq& (m_1 \kappa h)^L \cdot V^{0,s} + \frac{1-(m_1 \kappa h)^L}{1-m_1 \kappa h} \norm{\vec{x}^{L,s} - \vec{x}^s} \\
\leq& (m_1 \kappa h)^{L} \cdot m_0 \kappa h  + \frac{1}{1-m_1 \kappa h} \norm{\vec{x}^{L,s} - \vec{x}^s},
\end{aligned}
\end{equation*}
where $m_0 = \max_{s,i}{|\gRHS{}(\vec{v}_i^s)|}$. In addition,  we deduce that
\begin{equation*}
\begin{aligned}
\norm{\vec{x}^{L,s+1} - \vec{x}^{s+1}} \leq& \norm{\vec{x}^{L,s} - \vec{x}^s} + m_1\mu h \cdot V^{L,s}\\
\leq& (1+ \frac{m_1\mu h}{1-m_1 \kappa h}) \norm{\vec{x}^{L,s} - \vec{x}^s}  +  m_0m_1^{L+1} \mu \kappa^{L+1} h^{L+2}.
\end{aligned}
\end{equation*}
Finally, we obtain that
\begin{equation*}
\begin{aligned}
\norm{\vec{x}^{L,S} - \vec{x}^{S}} \leq & (1+ \frac{m_1\mu h}{1-m_1 \kappa h})^S \norm{\vec{x}^{L,0} - \vec{x}^{0}} + \frac{(1+ \frac{m_1\mu h}{1-m_1 \kappa h})^S-1}{\frac{m_1 \mu h}{1-m_1 \kappa h}}m_1^{L+1} m_0\mu \kappa^{L+1} h^{L+2}\\
\leq & \frac{(1+ \frac{m_1 \mu h}{1-m_1 \kappa h})^S-1}{\frac{m_1 \mu h}{1-m_1 \kappa h}}\cdot m_0m_1^{L+1}\mu \kappa^{L+1} h^{L+2}\\
=& S \norm{\gRHS{}(\vec{x})} \norm{\gRHS{}'(\vec{x})}^{L+1}\mu \kappa^{L+1} h^{L+2} + \mathcal{O}(h^{L+3}).
\end{aligned}
\end{equation*}
The proof is complete.
\end{proof}

\begin{namedLemma}[\textbf{Multiple compositions of Newton-Raphson iteration converges.}]
\label{lem:nr0}
Consider a consistent implicit Runge-Kutta scheme $\Phi_h$ \cref{eq:rk} and its approximation using Newton-Raphson iteration $\Phi_h^{L}$ \cref{eq:irk_nr}. Then, for any twice continuously differentiable $\gRHS{}$ and initial value $\vec{x}$, there exist remainder term $R = \mathcal{O}(h^{2^{L+1}+1})$ such that
\begin{equation*}
\norm{\big(\Phi_{h,\gRHS{}}^{L}\big)^S(\vec{x}) -\big(\Phi_{h,\gRHS{}}\big)^S(\vec{x})}_{\infty} \leq S \norm{\gRHS{}'(\vec{x})}(\frac{\norm{\gRHS{}''(\vec{x})}}{1-\norm{\gRHS{}'(\vec{x})}\kappa h})^{2^{L}-1} \norm{\gRHS{}(\vec{x})}^{2^{L}} \mu\kappa ^{2^{L+1}-1} h^{2^{L+1}} + R,
\end{equation*}
where $\mu$ and $\kappa$ are constants defined in \cref{lem:fp0}.
\end{namedLemma}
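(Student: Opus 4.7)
The plan is to mirror the structure of the proof of \cref{lem:fp0}, replacing the linear contraction of fixed-point iteration with the quadratic convergence that is characteristic of Newton-Raphson. First I would write out side by side the recursions defining the exact implicit stage values $\vec{v}_i^s$ of $\Phi_{h,\gRHS{}}$ (starting from $\vec{x}^s$) and the Newton-Raphson iterates $\vec{v}_i^{l,s}$ of $\Phi_{h,\gRHS{}}^L$ (starting from $\vec{v}_i^{0,s}=\vec{x}^{L,s}$), and set $V^{l,s}=\max_i\norm{\vec{v}_i^{l,s}-\vec{v}_i^s}$ together with $X^{s}=\norm{\vec{x}^{L,s}-\vec{x}^s}$, exactly as in the fixed-point case.

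The crucial new ingredient is a quadratic recurrence for $V^{l,s}$, obtained from the second-order Taylor remainder
\[
\gRHS{}(\vec{v}_j^s)-\gRHS{}(\vec{v}_j^{l-1,s})-\gRHS{}'(\vec{v}_j^{l-1,s})(\vec{v}_j^s-\vec{v}_j^{l-1,s})=\tfrac{1}{2}\gRHS{}''(\xi_j)(\vec{v}_j^s-\vec{v}_j^{l-1,s})^{\otimes 2}.
\]
Substituting this into the difference of the two stage equations and moving the linear term involving $\vec{v}_j^{l,s}-\vec{v}_j^s$ to the left yields, for $h$ small enough that $m_1\kappa h<1$,
\[
V^{l,s}\leq \frac{m_2\kappa h}{1-m_1\kappa h}\,(V^{l-1,s})^{2}+\frac{X^s}{1-m_1\kappa h},
\]
where $m_1=\norm{\gRHS{}'(\vec{x})}$, $m_2=\norm{\gRHS{}''(\vec{x})}$ and $m_0=\norm{\gRHS{}(\vec{x})}$ (evaluated on the appropriate small neighborhood). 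Starting from $V^{0,s}\leq X^s+m_0\kappa h$ and iterating the quadratic map gives the leading-order estimate
\[
V^{L,s}\leq \Bigl(\tfrac{m_2\kappa h}{1-m_1\kappa h}\Bigr)^{2^{L}-1}(m_0\kappa h)^{2^{L}}+\text{linear propagation from }X^s,
\]
which already produces exactly the exponents $2^{L}-1$ and $2^{L}$ that appear in the statement.

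Feeding this into the update $\vec{x}^{L,s+1}-\vec{x}^{s+1}=h\sum_i b_i\bigl(\gRHS{}(\vec{v}_i^{L,s})-\gRHS{}(\vec{v}_i^s)\bigr)$, bounded by $h\mu m_1 V^{L,s}$, adds one more factor of $h$ and turns $h^{2^{L+1}-1}$ into the claimed $h^{2^{L+1}}$; combined with the linear $X^s$-term this produces a difference inequality $X^{s+1}\leq(1+C_1 h)X^{s}+C_2 h^{2^{L+1}}$, and the discrete Gronwall / geometric-sum argument used in \cref{lem:fp0} then yields the factor $S$ in front of $h^{2^{L+1}}$. Every cross term arising from the coupling of quadratic self-convergence with linear $X^s$-propagation, together with corrections from evaluating $\gRHS{}''(\xi_j)$ off $\vec{x}$, is of strictly higher order in $h$ and is absorbed into the remainder $R=\mathcal{O}(h^{2^{L+1}+1})$.

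The main obstacle I anticipate is precisely this bookkeeping: Newton's quadratic self-convergence mixes with the linear propagation of $X^s$, and one must verify that no cross term contaminates the leading constant $S\,m_1\mu\,(m_2/(1-m_1\kappa h))^{2^{L}-1}m_0^{2^{L}}\kappa^{2^{L+1}-1}$. A secondary technical point is a domain/a-priori argument, analogous to the one in \cref{lem:fp0} and \cref{lem:rknr}, ensuring that all iterates remain inside a neighborhood on which the stated bounds on $\gRHS{}$, $\gRHS{}'$, and $\gRHS{}''$ hold and on which $m_1\kappa h<1$, so that the Newton Jacobian is invertible at every step. Both issues are routine extensions of the tools already deployed in the fixed-point proof.
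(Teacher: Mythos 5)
Your route is correct and reaches the same bound, but it differs from the paper's proof in one structural respect that is worth pointing out, because it is exactly what the paper does to avoid the bookkeeping you flag as your main obstacle. You iterate the \emph{mixed} recurrence
\begin{equation*}
V^{l,s}\leq \frac{m_2\kappa h}{1-m_1\kappa h}\,(V^{l-1,s})^{2}+\frac{X^{s}}{1-m_1\kappa h},\qquad V^{0,s}\leq X^{s}+m_0\kappa h,
\end{equation*}
and then must argue that the cross terms created by squaring $(V^{l-1,s})$ (the $(X^s)^2$ and $X^s\cdot m_0\kappa h$ pieces) are higher order so that only the pure quadratic chain and a single surviving copy of $X^s$ remain. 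That argument is correct (each squaring of the inhomogeneous part picks up at least one extra factor of $h$, and $X^s=\mathcal{O}(h^{2^{L+1}})$ in the bootstrapping), but it is not the paper's route. The paper instead introduces an \emph{intermediate reference stage value} $\vec{\hat{v}}_i^{s}$, defined as the exact solution of the implicit stage equations started from the perturbed point $\vec{x}^{L,s}$, and splits
\begin{equation*}
V^{l,s}\leq \underbrace{\max_i\norm{\vec{v}_i^{l,s}-\vec{\hat{v}}_i^{s}}}_{\hat V^{l,s}}\;+\;\underbrace{\max_i\norm{\vec{\hat{v}}_i^{s}-\vec{v}_i^{s}}}_{\tilde V^{s}}.
\end{equation*}
Because both $\vec{v}_i^{l,s}$ and $\vec{\hat{v}}_i^{s}$ are anchored at $\vec{x}^{L,s}$, the recurrence for $\hat V^{l,s}$ is \emph{purely} quadratic, $\hat V^{l,s}\leq\frac{m_2\kappa h}{1-m_1\kappa h}(\hat V^{l-1,s})^2$ with $\hat V^{0,s}\leq m_0\kappa h$, so the closed form $\hat V^{L,s}\leq\bigl(\frac{m_2\kappa h}{1-m_1\kappa h}\bigr)^{2^{L}-1}(m_0\kappa h)^{2^{L}}$ falls out with no inhomogeneous term and no cross-term analysis at all; the $X^s$ dependence lives entirely in the separate linear bound $\tilde V^{s}\leq X^{s}/(1-\tilde m_1\kappa h)$. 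From there the step update and the geometric-sum/Gronwall argument proceed exactly as you describe. In short: your direct iteration is sound but needs the extra "no cross-term contamination" verification; the paper's triangle-inequality decomposition through $\vec{\hat{v}}_i^{s}$ buys a cleaner pure-quadratic recurrence and makes that verification unnecessary. You should also be a little careful that the constants $m_0,m_1,m_2$ in your recurrence are $\sup$-type bounds over the neighborhood visited by the iterates (which then reduce to $\norm{\gRHS{}(\vec{x})}$, $\norm{\gRHS{}'(\vec{x})}$, $\norm{\gRHS{}''(\vec{x})}$ only at leading order), which is precisely the domain/a-priori point you raise as a secondary concern and which the paper handles with the inductive confinement of $\vec{x}^{L,s}$ and $\vec{v}_i^{l,s}$ to shrinking balls.
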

\begin{proof}
The solution of $\big(\Phi_{h,\gRHS{}}\big)^S(\vec{x})$ and $\big(\Phi_{h,\gRHS{}}^{L}\big)^S(\vec{x})$ with initial value $\vec{x}$ are respectively given by
\begin{equation*}
\left\{\begin{aligned}
&\vec{x}^0 = \vec{x},\\
&\vec{v}_{i}^{s} = \vec{x}^{s} + h\sum_{j=1}^I a_{ij} \gRHS{}(\vec{v}_{j}^{s}),\\
&\vec{x}^{s+1} = \vec{x}^{s} + h\sum_{i=1}^I b_{i} \gRHS{}(\vec{v}_i^{s}),\\
\end{aligned}\right.
\quad
\left\{\begin{aligned}
&\vec{x}^{L,0} = \vec{x},\quad \vec{v}_i^{0,s} = \vec{x}^{L,s},\\
&\vec{v}_i^{l,s} = \vec{x}^{L,s} + h\sum_{j=1}^I a_{ij} \big(\gRHS{}(\vec{v}_{j}^{l-1,s})+ g'(\vec{v}_{j}^{l-1,s})(\vec{v}_{j}^{l,s} - \vec{v}_{j}^{l-1,s})\big),\\
&\vec{x}^{L,s+1} = \vec{x}^{L,s} + h\sum_{i=1}^I b_{i} \gRHS{}(\vec{v}_i^{L,s}),\\
\end{aligned}\right.
\end{equation*}
where $s = 0,\cdots, S-1$, $l =1, \cdots, L$, $i=1,\cdots, I$ and $\big(\Phi_{h,\gRHS{}}\big)^S(\vec{x}) = \vec{x}^S$, $\big(\Phi_{h,\gRHS{}}^{L}\big)^S(\vec{x}) = \vec{x}^{L,S}$. Let
\begin{equation*}
\vec{\hat{v}}_i^s = \vec{x}^{L,s} + h\sum_{j=1}^I a_{ij} \gRHS{}(\vec{\hat{v}}_{j}^{s}), \ \text{for}\ i= 1,\cdots, I,\ s=0,\cdots, S-1,
\end{equation*}
and $m_0 = \max_{s,i}{|\gRHS{}(\vec{\hat{v}}_i^s)|}$, $m_1 = \max_{s,l,i} \norm{\gRHS{}'(\vec{v}_{j}^{l-1,s})}$, $m_2 = \max_{s,l,i} \sup_{\theta\in [0,1]}\norm{\gRHS{}''(\theta \vec{v}_{j}^{l-1,s} + (1-\theta)\vec{\hat{v}}_j^s)}/2$, $\hat{V}^{l,s} = \max_{i}\norm{\vec{v}_i^{l.s} - \vec{\hat{v}}_i^s}$, we have that
\begin{equation}\label{eq:newton 1}
\begin{aligned}
\norm{\vec{v}_i^{l.s} - \vec{\hat{v}}_i^s} \leq &  h\sum_{j=1}^I |a_{ij}| \norm{\gRHS{}(\vec{v}_{j}^{l-1,s})+ g'(\vec{v}_{j}^{l-1,s})(\vec{\hat{v}}_j^s - \vec{v}_{j}^{l-1,s})-\gRHS{}(\vec{\hat{v}}_j^s) + g'(\vec{v}_{j}^{l-1,s})(\vec{v}_{j}^{l,s} - \vec{\hat{v}}_j^s)}\\
\leq & m_2\kappa h (\hat{V}^{l-1,s})^2 + m_1\kappa h \hat{V}^{l,s},
\end{aligned}
\end{equation}
which implies that $\hat{V}^{l,s} \leq (\frac{m_2\kappa h}{1-m_1\kappa h})^{2^l-1} (m_0 \kappa h)^{2^l}$.
Let $\tilde{m}_1 = \max_{s,i} \norm{\gRHS{}(\vec{\hat{v}}_i^s) - \gRHS{}(\vec{v}_i^s)}/\norm{\vec{\hat{v}}_i^s - \vec{v}_i^s}$ and $\tilde{V}^{s} = \max_{i}\norm{\vec{\hat{v}}_i^s-\vec{v}_i^s}$, we have that
\begin{equation*}
\norm{\vec{\hat{v}}_i^s - \vec{v}_i^s} \leq \norm{\vec{x}^{L,s}- \vec{x}^s} + \tilde{m}_1\kappa h \tilde{V}^s,
\end{equation*}
which implies that $\tilde{V}^{s} \leq  \norm{\vec{x}^{L,s}- \vec{x}^s} /(1-\tilde{m}_1\kappa h)$. Therefore, we conclude that
\begin{equation*}
V^{l,s} \leq \hat{V}^{l,s} + \tilde{V}^{s} \leq (\frac{m_2\kappa h}{1-m_1\kappa h})^{2^l-1} (m_0 \kappa h)^{2^l} + \frac{1}{1- \tilde{m}_1\kappa h}\norm{\vec{x}^{L,s}- \vec{x}^s}.
\end{equation*}

In addition, similarly to \cref{lem:fp0}, we have that
\begin{equation*}
\begin{aligned}
\norm{\vec{x}^{L,s+1} - \vec{x}^{s+1}} \leq & \norm{\vec{x}^{L,s} - \vec{x}^s} + m_1\mu h \cdot V^{L,s}\\
\leq & (1+ \frac{m_1\mu h }{1-\tilde{m}_1 \kappa h}) \norm{\vec{x}^{L,s} - \vec{x}^s}  +  m_1(\frac{m_2}{1-m_1\kappa h})^{2^{L}-1} m_0^{2^{L}}\mu \kappa^{2^{L+1}-1} h^{2^{L+1}}.
\end{aligned}
\end{equation*}
and thus
\begin{equation*}
\begin{aligned}
&\norm{\vec{x}^{L,S} - \vec{x}^{S}}\\
\leq & (1+ \frac{m_1\mu h}{1-\tilde{m}_1  \kappa h})^S\norm{\vec{x}^{L,0} - \vec{x}^{0}} +
\frac{(1+ \frac{m_1\mu h}{1-\tilde{m}_1  \kappa h})^S-1}{\frac{m_1 \mu h}{1-\tilde{m}_1  \kappa h}} m_1(\frac{m_2}{1-m_1\kappa h})^{2^{L}-1} m_0^{2^{L}}\mu \kappa^{2^{L+1}-1} h^{2^{L+1}}\\
\leq & S \norm{\gRHS{}'(\vec{x})}(\frac{\norm{\gRHS{}''(\vec{x})}}{1-\norm{\gRHS{}'(\vec{x})}\kappa h})^{2^{L}-1} \norm{\gRHS{}(\vec{x})}^{2^{L}} \mu\kappa ^{2^{L+1}-1} h^{2^{L+1}} + \mathcal{O}(h^{2^{L+1}+1}),
\end{aligned}
\end{equation*}
which completes the proof.
\end{proof}

We next present the proof of \cref{the:hat}.
\begin{proof}[Proof of \cref{the:hat}]
We first prove that the statement holds for fixed-point iteration by induction. First, the case when $k=0$ is obvious since $\fRHS{} = \hat{\fRHS}_0=\fRHS{}_{0}$. Suppose now that $\hat{\fRHS}_k=\fRHS{}_{k}$ for $0\leq k \leq K \leq L-1$, then
\begin{equation*}
\hat{\fRHS}_h^{K} =\sum_{k=0}^K h^k \hat{\fRHS}_k = \sum_{k=0}^K h^k \fRHS{}_{k}= \fRHS{}_{h}^{K}
\end{equation*}
By \cref{lem:fp0}, we have
\begin{equation}\label{eq:diff}
\Phi_{h,\hat{\fRHS}_h^{K}}- \Phi^L_{h,\hat{\fRHS}_h^{K}} = \mathcal{O}(h^{L+2}).
\end{equation}
We rewrite the calculation procedure of IMDE as
\begin{equation*}
\begin{aligned}
\phi_{h,\fRHS{}} - \Phi_{h,\hat{\fRHS}_h^{K}} = & h^{K+2}\hat{\fRHS}_{K+1} +\mathcal{O}(h^{K+3}),\\
\phi_{h,\fRHS{}} - \Phi^L_{h,\fRHS{}_{h}^{K}} = & h^{K+2}\fRHS{}_{K+1} +\mathcal{O}(h^{K+3}).\\
\end{aligned}
\end{equation*}
Subtracting above two equations and substituting \cref{eq:diff}, we conclude that $\hat{\fRHS}_{K+1}=\fRHS{}_{K+1}$, which completes the induction.

In addition, for Newton-Raphson iteration, by \cref{lem:nr0}, repeating the above induction implies $\hat{\fRHS}_k=\fRHS{}_{k}$ for $0\leq k \leq 2^{L+1}-2$. The proof is completed.
\end{proof}

\subsection{Proof of Thereom \ref{the:error} (\nameref{the:error})}\label{app:the:error}

\begin{proof}
The proof is a direct consequence of \cref{the:implicit imde}, \cref{the:hat} and the following Lemma.
\end{proof}

\begin{namedLemma}[\textbf{IMDE power series for a $p^{\text{th}}$ order integrator has first error term of order $h^p$.}]
\label{lem:order}
Suppose that the integrator $\Phi_{h}(\vec{x})$ with discrete step $h$ is of order $p\geq 1$, then, the IMDE obeys
\begin{equation*}
\frac{d}{dt}\vec{\tilde{y}}=\fRHS{}_h(\vec{\tilde{y}})=\fRHS{}(\vec{\tilde{y}})+h^p\fRHS{}_{p}(\vec{\tilde{y}})+\cdots.
\end{equation*}
\end{namedLemma}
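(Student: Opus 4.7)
The plan is to prove by induction on $k$ that $\fRHS{}_k = 0$ for $1 \leq k \leq p-1$, with the base case $\fRHS{}_0 = \fRHS{}$ coming from consistency (i.e., matching the coefficient of $h^1$ on both sides of the defining identity). The argument rests on two relations used simultaneously: the IMDE identity $\Phi_{h,\fRHS{}_h}(\vec{x}) = \phi_{h,\fRHS{}}(\vec{x})$ as formal power series in $h$, and the order-$p$ hypothesis $\Phi_{h,\fRHS{}}(\vec{x}) - \phi_{h,\fRHS{}}(\vec{x}) = \mathcal{O}(h^{p+1})$.

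For the inductive step, suppose $\fRHS{}_1 = \cdots = \fRHS{}_{j-1} = 0$ for some $j$ with $1 \leq j \leq p-1$, so that $\fRHS{}_h = \fRHS{} + h^j \fRHS{}_j + h^{j+1}\fRHS{}_{j+1} + \cdots$. I would then isolate the coefficient of $h^{j+1}$ on both sides of $\Phi_{h,\fRHS{}_h}(\vec{x}) = \phi_{h,\fRHS{}}(\vec{x})$. The key structural fact, already recorded just below \cref{eq:numsolu}, is that each term in $h^i d_{i,\fRHS{}_h}(\vec{x})$ containing $\fRHS{}_k$ carries a factor $h^{k+i}$ or higher. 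Hence at order $h^{j+1}$, the only $\fRHS{}_k$ with $k\geq 1$ that contributes is $\fRHS{}_j$, appearing exactly once through the consistent leading piece $h\fRHS{}_h$; every remaining contribution depends only on $\fRHS{}_0 = \fRHS{}$ and its derivatives, and therefore coincides term-by-term with the $h^{j+1}$ coefficient of $\Phi_{h,\fRHS{}}(\vec{x})$.

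Combining these observations, the coefficient of $h^{j+1}$ in $\Phi_{h,\fRHS{}_h}(\vec{x})$ equals $\fRHS{}_j(\vec{x})$ plus the coefficient of $h^{j+1}$ in $\Phi_{h,\fRHS{}}(\vec{x})$. Since $j+1 \leq p$, the order-$p$ hypothesis forces the latter to match the coefficient of $h^{j+1}$ in $\phi_{h,\fRHS{}}(\vec{x})$. Invoking the IMDE identity, which equates the $h^{j+1}$ coefficients of $\Phi_{h,\fRHS{}_h}$ and $\phi_{h,\fRHS{}}$ directly, then gives $\fRHS{}_j(\vec{x}) = 0$. This closes the induction and yields the claimed expansion $\fRHS{}_h = \fRHS{} + h^p \fRHS{}_p + \cdots$.

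The main obstacle I anticipate is not the induction itself but the bookkeeping in the middle step: making rigorous that, once $\fRHS{}_1,\dots,\fRHS{}_{j-1}$ are zero, every residual contribution in $\Phi_{h,\fRHS{}_h}$ at order $h^{j+1}$ is structurally identical to the corresponding contribution in $\Phi_{h,\fRHS{}}$. This is intuitive from the B-series viewpoint, since each elementary differential appearing in $d_{i,g}$ depends only on $g$, but a careful statement is needed. If desired, the argument can be formalized by comparing B-series coefficients as in \cite{chartier2010algebraic}, in which case the conclusion reduces to a single-line matching of the coefficients attached to each rooted tree of order $j+1$.
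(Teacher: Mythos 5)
Your argument is correct and complete. The paper itself does not reproduce a proof here, deferring to \cite{zhu2022on}, but the induction you give — using the structural observation below \cref{eq:numsolu} to isolate $\fRHS{}_j$ as the sole new contribution at order $h^{j+1}$, the order-$p$ hypothesis to identify the $h^{j+1}$ coefficient of $\Phi_{h,\fRHS{}}$ with that of $\phi_{h,\fRHS{}}$, and the defining IMDE identity to force $\fRHS{}_j=0$ — is exactly the standard argument for this statement.
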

\begin{proof}
The proof can be found in~\cite{zhu2022on}.
\end{proof}

\subsection{Proof of Lemma \ref{lem:fp} (\nameref{lem:fp})}\label{app:lem:fp}

\begin{proof}
By Minkowski's inequality, we obtain that for neural network $\fRHS{}_{\theta}$,
\begin{equation*}
\begin{aligned}
\mathcal{L}_{exact}^{\frac{1}{2}}\leq & \mathcal{L}_{unrolled}^{\frac{1}{2}}+\mathcal{R}_{L}, \\
\mathcal{L}_{exact}^{\frac{1}{2}}\leq & \mathcal{L}_{unrolled}^{\frac{1}{2}}+\left(\sum_{n=1}^N\sum_{m=1}^{M} \norm{\brac{\Phi_{h,\fRHS{}_{\theta}}^{L}(\vec{x}_n)}^{ms}- \brac{\Phi_{h,\fRHS{}_{\theta}}^{L+1}(\vec{x}_n)}^{ms}}_2^2/(m\Delta t)^2\right)^{\frac{1}{2}} + \mathcal{R}_{L+1},
\end{aligned}
\end{equation*}
where 
\begin{equation*}
\mathcal{R}_{L} = \left(\sum_{n=1}^N\sum_{m=1}^{M} \norm{\brac{\Phi_{h,\fRHS{}_{\theta}}^L(\vec{x}_n)}^{ms}- \brac{\Phi_{h,\fRHS{}_{\theta}}(\vec{x}_n)}^{ms}}_2^2/(m\Delta t)^2\right)^{\frac{1}{2}}.    
\end{equation*}
According to \cref{lem:fp0} and \cref{lem:nr0},  we have $\mathcal{R}_{L} =\mathcal{O}(h^{L^*+1})$ where $L^{*}=L$ for the unrolled approximation using fixed-point iteration \cref{eq:irk} and $L^{*}=2^{L+1}-2$ for the unrolled approximation using Newton-Raphson iteration \cref{eq:irk_nr} and thus complete the proof.
\end{proof}


\bibliographystyle{siamplain}
\bibliography{references}

\begin{thebibliography}{10}

\bibitem{Almeida1987}
{\sc L.~B. Almeida}, {\em A learning rule for asynchronous perceptrons with
  feedback in a combinatorial environment}, in IEEE First International
  Conference on Neural Networks, IEEE, 1987, pp.~608--618.

\bibitem{anderson1996comparison}
{\sc J.~Anderson, I.~Kevrekidis, and R.~Rico-Martinez}, {\em A comparison of
  recurrent training algorithms for time series analysis and system
  identification}, Computers \& chemical engineering, 20 (1996),
  pp.~S751--S756.

\bibitem{bai2019deep}
{\sc S.~Bai, J.~Z. Kolter, and V.~Koltun}, {\em Deep equilibrium models}, in
  33rd Conference on Neural Information Processing Systems (NeurIPS 2019),
  Vancouver, BC, Canada, 2019, pp.~688--699.

\bibitem{bai2022neural}
{\sc S.~Bai, V.~Koltun, and J.~Z. Kolter}, {\em Neural deep equilibrium
  solvers}, in International Conference on Learning Representations, 2022.

\bibitem{behrmann2019invertible}
{\sc J.~Behrmann, W.~Grathwohl, R.~T.~Q. Chen, D.~Duvenaud, and J.~Jacobsen},
  {\em Invertible residual networks}, in Proceedings of the 36th International
  Conference on Machine Learning, {ICML} 2019, Long Beach, California, {USA},
  vol.~97, {PMLR}, 2019, pp.~573--582.

\bibitem{bertalan2019learning}
{\sc T.~Bertalan, F.~Dietrich, I.~Mezi{\'c}, and I.~G. Kevrekidis}, {\em On
  learning hamiltonian systems from data}, Chaos: An Interdisciplinary Journal
  of Nonlinear Science, 29 (2019), p.~121107.

\bibitem{botev2021priors}
{\sc A.~Botev, A.~Jaegle, P.~Wirnsberger, D.~Hennes, and I.~Higgins}, {\em
  Which priors matter? benchmarking models for learning latent dynamics}, in
  35th Conference on Neural Information Processing Systems (NeurIPS 2021) Track
  on Datasets and Benchmarks, 2021.

\bibitem{brunton2016discovering}
{\sc S.~L. Brunton, J.~L. Proctor, and J.~N. Kutz}, {\em Discovering governing
  equations from data by sparse identification of nonlinear dynamical systems},
  Proceedings of the national academy of sciences, 113 (2016), pp.~3932--3937.

\bibitem{chartier2010algebraic}
{\sc P.~Chartier, E.~Hairer, and G.~Vilmart}, {\em Algebraic structures of
  b-series}, Foundations of Computational Mathematics, 10 (2010), pp.~407--427.

\bibitem{chen2021data}
{\sc R.~Chen and M.~Tao}, {\em Data-driven prediction of general hamiltonian
  dynamics via learning exactly-symplectic maps}, in Proceedings of the 38th
  International Conference on Machine Learning ({ICML} 2021), vol.~139, {PMLR},
  2021, pp.~1717--1727.

\bibitem{chen2018neural}
{\sc T.~Chen, Y.~Rubanova, J.~Bettencourt, and D.~Duvenaud}, {\em Neural
  ordinary differential equations}, in 32nd Conference on Neural Information
  Processing Systems (NeurIPS 2018), 2018, pp.~6572--6583.

\bibitem{chen2020symplectic}
{\sc Z.~Chen, J.~Zhang, M.~Arjovsky, and L.~Bottou}, {\em Symplectic recurrent
  neural networks}, in 8th International Conference on Learning
  Representations, {ICLR} 2020, Addis Ababa, Ethiopia, 2020.

\bibitem{daniels2014efficient}
{\sc B.~C. Daniels and I.~Nemenman}, {\em Efficient inference of parsimonious
  phenomenological models of cellular dynamics using s-systems and alternating
  regression}, Plos One, 10 (2014).

\bibitem{doncevic2022recursively}
{\sc D.~T. Doncevic, A.~Mitsos, Y.~Guo, Q.~Li, F.~Dietrich, M.~Dahmen, and
  I.~G. Kevrekidis}, {\em A recursively recurrent neural network ({{R2N2}})
  architecture for learning iterative algorithms}, 2022,
  \url{https://arxiv.org/abs/2211.12386}.

\bibitem{du2022discovery}
{\sc Q.~Du, Y.~Gu, H.~Yang, and C.~Zhou}, {\em The discovery of dynamics via
  linear multistep methods and deep learning: error estimation}, SIAM Journal
  on Numerical Analysis, 60 (2022), pp.~2014--2045.

\bibitem{eirola1993aspects}
{\sc T.~Eirola}, {\em Aspects of backward error analysis of numerical
  {{ODE}}s}, Journal of Computational and Applied Mathematics, 45 (1993),
  pp.~65--73.

\bibitem{laurent2021implicit}
{\sc L.~El~Ghaoui, F.~Gu, B.~Travacca, A.~Askari, and A.~Tsai}, {\em Implicit
  deep learning}, SIAM Journal on Mathematics of Data Science, 3 (2021),
  pp.~930--958.

\bibitem{feng1991formal}
{\sc K.~Feng}, {\em Formal power series and numerical algorithms for dynamical
  systems}, in Proceedings of international conference on scientific
  computation, Hangzhou, China, Series on Appl. Math. Singapore: World
  Scientific, vol.~1, 1991, pp.~28--35.

\bibitem{feng1993formal}
{\sc K.~Feng}, {\em Formal dynamical systems and numerical algorithms}, SERIES
  ON APPLIED MATHEMATICS, 4 (1993), pp.~1--10.

\bibitem{geng2021on}
{\sc Z.~Geng, X.-Y. Zhang, S.~Bai, Y.~Wang, and Z.~Lin}, {\em On training
  implicit models}, in Advances in Neural Information Processing Systems, 2021.

\bibitem{gonzalez1998identification}
{\sc R.~Gonz{\'a}lez-Garc{\'\i}a, R.~Rico-Mart{\`\i}nez, and I.~G. Kevrekidis},
  {\em Identification of distributed parameter systems: A neural net based
  approach}, Computers \& chemical engineering, 22 (1998), pp.~S965--S968.

\bibitem{greydanus2019hamiltonian}
{\sc S.~Greydanus, M.~Dzamba, and J.~Yosinski}, {\em {Hamiltonian} neural
  networks}, in Advances in Neural Information Processing Systems 32, 2019,
  pp.~15353--15363.

\bibitem{Guo2021PersonalizedAG}
{\sc Y.~Guo, F.~Dietrich, T.~S. Bertalan, D.~T. Doncevic, M.~Dahmen, I.~G.
  Kevrekidis, and Q.~Li}, {\em Personalized algorithm generation: A case study
  in learning {{ODE}} integrators}, SIAM J. Sci. Comput., 44 (2021),
  pp.~1911--.

\bibitem{hairer1997life}
{\sc E.~Hairer and C.~Lubich}, {\em The life-span of backward error analysis
  for numerical integrators}, Numerische Mathematik, 76 (1997), pp.~441--462.

\bibitem{hairer2006geometric}
{\sc E.~Hairer, C.~Lubich, and G.~Wanner}, {\em Geometric numerical
  integration: structure-preserving algorithms for ordinary differential
  equations}, vol.~31, Springer Science \& Business Media, 2006.

\bibitem{hairer1996solving}
{\sc E.~Hairer and G.~Wanner}, {\em Solving ordinary differential equations
  II}, vol.~375, Springer Berlin Heidelberg, 1996.

\bibitem{hu2022revealing}
{\sc P.~Hu, W.~Yang, Y.~Zhu, and L.~Hong}, {\em Revealing hidden dynamics from
  time-series data by {{ODEN}}et}, Journal of Computational Physics, 461
  (2022), p.~111203.

\bibitem{huang2021implicit}
{\sc Z.~Huang, S.~Bai, and J.~Z. Kolter}, {\em {$($Implicit$)^{2}$}: Implicit
  layers for implicit representations}, 35th Conference on Neural Information
  Processing Systems (NeurIPS 2021), 34 (2021).

\bibitem{huh2020time}
{\sc I.~Huh, E.~Yang, S.~J. Hwang, and J.~Shin}, {\em Time-reversal symmetric
  {ODE} network}, in 34th Conference on Neural Information Processing Systems
  (NeurIPS 2020), 2020.

\bibitem{jin2020sympnets}
{\sc P.~Jin, Z.~Zhang, A.~Zhu, Y.~Tang, and G.~E. Karniadakis}, {\em Sympnets:
  Intrinsic structure-preserving symplectic networks for identifying
  hamiltonian systems}, Neural Networks, 132 (2020), pp.~166--179.

\bibitem{keller2021discovery}
{\sc R.~T. Keller and Q.~Du}, {\em Discovery of dynamics using linear multistep
  methods}, SIAM Journal on Numerical Analysis, 59 (2021), pp.~429--455.

\bibitem{kingma2014adam}
{\sc D.~P. Kingma and J.~Ba}, {\em Adam: {A} method for stochastic
  optimization}, in 3rd International Conference on Learning Representations,
  2014.

\bibitem{lovelett2020graybox}
{\sc R.~J. Lovelett, J.~L. Avalos, and I.~G. Kevrekidis}, {\em Partial
  observations and conservation laws: Gray-box modeling in biotechnology and
  optogenetics}, Industrial \& Engineering Chemistry Research, 59 (2020),
  pp.~2611--2620, \url{https://doi.org/10.1021/acs.iecr.9b04507},
  \url{https://doi.org/10.1021/acs.iecr.9b04507},
  \url{https://arxiv.org/abs/https://doi.org/10.1021/acs.iecr.9b04507}.

\bibitem{lu2019nonparametric}
{\sc F.~Lu, M.~Zhong, S.~Tang, and M.~Maggioni}, {\em Nonparametric inference
  of interaction laws in systems of agents from trajectory data}, Proceedings
  of the National Academy of Sciences, 116 (2019), pp.~14424--14433.

\bibitem{pal2021opening}
{\sc A.~Pal, Y.~Ma, V.~B. Shah, and C.~V. Rackauckas}, {\em Opening the
  blackbox: Accelerating neural differential equations by regularizing internal
  solver heuristics}, in Proceedings of the 38th International Conference on
  Machine Learning, {ICML} 2021, vol.~139, {PMLR}, 2021, pp.~8325--8335.

\bibitem{pineda87_gener_back_propag_to_recur_neural_networ}
{\sc F.~J. Pineda}, {\em Generalization of back-propagation to recurrent neural
  networks}, Physical Review Letters, 59 (1987), pp.~2229--2232,
  \url{https://doi.org/10.1103/physrevlett.59.2229},
  \url{https://doi.org/10.1103/physrevlett.59.2229}.

\bibitem{poli2020hypersolver}
{\sc M.~Poli, S.~Massaroli, A.~Yamashita, H.~Asama, and J.~Park}, {\em
  Hypersolvers: Toward fast continuous-depth models}, in 34th Conference on
  Neural Information Processing Systems (NeurIPS 2020), Vancouver, Canada.,
  2020.

\bibitem{chapter_bulsari}
{\sc I.~G.~K. R.~Rico-Mart\'{i}nez and K.~Krischer}, {\em Nonlinear system
  identification using neural networks: dynamics and instabilities}, Elsevier
  Science, 1995, ch.~16.

\bibitem{raissi2018hidden}
{\sc M.~Raissi and G.~E. Karniadakis}, {\em Hidden physics models: Machine
  learning of nonlinear partial differential equations}, Journal of
  Computational Physics, 357 (2018), pp.~125--141.

\bibitem{raissi2018multistep}
{\sc M.~Raissi, P.~Perdikaris, and G.~E. Karniadakis}, {\em Multistep neural
  networks for data-driven discovery of nonlinear dynamical systems}, arXiv
  preprint arXiv:1801.01236,  (2018).

\bibitem{reich1999backward}
{\sc S.~Reich}, {\em Backward error analysis for numerical integrators}, SIAM
  Journal on Numerical Analysis, 36 (1999), pp.~1549--1570.

\bibitem{rico1994continuous}
{\sc R.~Rico-Martinez, J.~Anderson, and I.~Kevrekidis}, {\em Continuous-time
  nonlinear signal processing: a neural network based approach for gray box
  identification}, in Proceedings of IEEE Workshop on Neural Networks for
  Signal Processing, IEEE, 1994, pp.~596--605.

\bibitem{rico1993continuous}
{\sc R.~Rico-Martinez and I.~G. Kevrekidis}, {\em Continuous time modeling of
  nonlinear systems: A neural network-based approach}, in IEEE International
  Conference on Neural Networks, IEEE, 1993, pp.~1522--1525.

\bibitem{rico-martinez92_discr_vs}
{\sc R.~Rico-Mart{\'i}nez, K.~Krischer, I.~Kevrekidis, M.~Kube, and J.~Hudson},
  {\em Discrete- vs. continuous-time nonlinear signal processing of cu
  electrodissolution data}, Chemical Engineering Communications, 118 (1992),
  pp.~25--48, \url{https://doi.org/10.1080/00986449208936084}.

\bibitem{saad1986gmres}
{\sc Y.~Saad and M.~H. Schultz.}, {\em {GMRES: A generalized minimal residual
  algorithm for solving nonsymmetric linear systems}}, SIAM Journal on
  scientific and statistical computing, 7 (1986), pp.~856--869.

\bibitem{sanz1992symplectic}
{\sc J.~M. Sanz-Serna}, {\em Symplectic integrators for hamiltonian problems:
  an overview}, Acta numerica, 1 (1992), pp.~243--286.

\bibitem{schmidt2009distilling}
{\sc M.~Schmidt and H.~Lipson}, {\em Distilling free-form natural laws from
  experimental data}, Science, 324 (2009), pp.~81--85.

\bibitem{toth2020hamiltonian}
{\sc P.~Toth, D.~J. Rezende, A.~Jaegle, S.~Racani{\`{e}}re, A.~Botev, and
  I.~Higgins}, {\em Hamiltonian generative networks}, in 8th International
  Conference on Learning Representations, {ICLR} 2020, Addis Ababa, Ethiopia,
  2020.

\bibitem{williams1989learning}
{\sc R.~J. Williams and D.~Zipser}, {\em A learning algorithm for continually
  running fully recurrent neural networks}, Neural computation, 1 (1989),
  pp.~270--280.

\bibitem{wu2020structure}
{\sc K.~Wu, T.~Qin, and D.~Xiu}, {\em Structure-preserving method for
  reconstructing unknown hamiltonian systems from trajectory data}, SIAM
  Journal on Scientific Computing, 42 (2020), pp.~A3704--A3729.

\bibitem{wu2019numerical}
{\sc K.~Wu and D.~Xiu}, {\em Numerical aspects for approximating governing
  equations using data}, Journal of Computational Physics, 384 (2019),
  pp.~200--221.

\bibitem{yoshida1993recent}
{\sc H.~Yoshida}, {\em Recent progress in the theory and application of
  symplectic integrators}, Qualitative and Quantitative Behaviour of Planetary
  Systems,  (1993), pp.~27--43.

\bibitem{yu2021onsagernet}
{\sc H.~Yu, X.~Tian, E.~Weinan, and Q.~Li}, {\em Onsagernet: Learning stable
  and interpretable dynamics using a generalized onsager principle}, Physical
  Review Fluids, 6 (2021), p.~114402.

\bibitem{zhang2021gfinns}
{\sc Z.~Zhang, Y.~Shin, and G.~E. Karniadakis}, {\em Gfinns: Generic formalism
  informed neural networks for deterministic and stochastic dynamical systems},
  arXiv preprint arXiv:2109.00092,  (2021).

\bibitem{zhu2022on}
{\sc A.~Zhu, P.~Jin, B.~Zhu, and Y.~Tang}, {\em On numerical integration in
  neural ordinary differential equations}, in Proceedings of the 39th
  International Conference on Machine Learning ({ICML} 2022), vol.~162, {PMLR},
  2022, pp.~27527--27547.

\end{thebibliography}

\end{document}